\documentclass[12pt, reqno]{amsart}

\usepackage[utf8]{inputenc}
\usepackage[T1]{fontenc}

\usepackage{  amssymb,  mathtools,  cite,  enumerate,  color,  hyperref,  amsfonts,  amsmath,  amsthm,  setspace,  tikz,  verbatim,  booktabs,  multirow,  enumitem}

\usepackage{charter}

\usepackage[a4paper,margin=1.9cm,top=2.5cm,bottom=2.5cm,centering,vcentering]{geometry}
\numberwithin{equation}{section}

\definecolor{ao(english)}{rgb}{0.0, 0.5, 0.0}

\hypersetup{colorlinks=true, linkcolor=ao(english),citecolor=ao(english)}

\usepackage[normalem]{ulem}

\theoremstyle{plain}
\newtheorem{theorem}{Theorem}[section]
\newtheorem{corollary}[theorem]{Corollary}
\newtheorem*{corollary*}{Corollary}

\newtheorem{lemma}[theorem]{Lemma}
\newtheorem{proposition}[theorem]{Proposition}

\theoremstyle{definition}
\newtheorem{definition}[theorem]{Definition}
\newtheorem*{definition*}{Definition}
\newtheorem*{def*}{Definition}

\newtheorem*{example*}{Example}

\newtheorem*{theorem*}{Theorem}

\theoremstyle{remark}
\newtheorem*{remark}{Remark}

\newcommand{\C}{\mathbb{C}}

\newcommand\thm[1]{\ref{thm:#1}}

\newcommand\corol[1]{\ref{cor:#1}}

\newcommand\propo[1]{\ref{propo:#1}}
\newcommand\eqn[1]{(\ref{eq:#1})}
\newcommand\sect[1]{\ref{sec:#1}}

\newcommand\eqnref[1]{(\ref{eq:#1})}
\newcommand\subsect[1]{\ref{subsec:#1}}

\newcommand{\beq}{\begin{equation}}
\newcommand{\eeq}{\end{equation}}

\allowdisplaybreaks

\begin{document}

\title[2- and 3-Dissections of Mock Theta Functions]{2- and 3-Dissections of Second-, Sixth-, and Eighth-Order Mock Theta Functions}

\author[F. Garvan]{Frank Garvan}
\address[F. Garvan]{Department of Mathematics, University of Florida, P.O. Box 118105, Gainesville, FL 32611-8105, USA}
\email{fgarvan@ufl.edu}

\author[H. Nath]{Hemjyoti Nath}
\address[H. Nath]{Department of Mathematics, University of Florida, P.O. Box 118105, Gainesville, FL 32611-8105, USA}
\email{h.nath@ufl.edu}


\subjclass[2020]{11B65, 11F11, 11F27, 11P84}             

\keywords{Mock theta functions, $m$-dissections, theta function identities, Appell--Lerch sums}

\begin{abstract}
In this paper, we develop a systematic method for obtaining and proving $m$-dissections of mock theta functions. In 2014, Hickerson and Mortenson showed how to derive and prove identities for Ramanujan's mock theta functions and Hecke-type indefinite theta series using Appell--Lerch sums. We build on their transformation formula method, combining it with symbolic computations and algorithms for the theory of modular functions. We focus exclusively on the cases of 2- and 3-dissections.
\end{abstract}

\maketitle

\tableofcontents

\bigskip   

\section{Introduction}

Ramanujan, in his last letter to Hardy, introduced a collection of seventeen functions that are now known as the \emph{mock theta functions}. Each of these functions is represented by a \(q\)-series convergent for \( |q|<1 \). Although these series are not theta functions in the classical sense, they exhibit asymptotic behavior analogous to that of ordinary theta functions. Ramanujan further classified these functions according to their ``order,'' though he did not provide a precise definition of this terminology.

Ramanujan's original list consists of four third-order mock theta functions together with several associated identities, ten fifth-order mock theta functions and identities, and three seventh-order mock theta functions, which he remarked were unrelated. A substantial number of additional mock theta identities were later discovered in Ramanujan's lost notebook \cite{Ramanujan1988}. Among these were the celebrated identities for the fifth-order mock theta functions, now commonly referred to as the mock theta conjectures, which were subsequently proved by Hickerson \cite{hickerson1988proof}.

The lost notebook also contained the tenth-order mock theta functions and their associated identities \cite{Choi1999, choi2000tenth, choi2007tenth}, as well as the sixth-order mock theta functions and related identities \cite{andrews1991ramanujan, berndt2007sixth}. Although Hickerson established identities for the seventh-order mock theta functions analogous to the mock theta conjectures \cite{hickerson1988seventh}, the seventh-order functions themselves were notably absent from the lost notebook. It has been conjectured that portions of the manuscript containing these functions may have been lost \cite[p.~287]{andrews2018ramanujan}.

In this paper, we investigate dissections of mock theta functions, a topic that appears to have received relatively little systematic attention in the literature. Numerous identities for mock theta functions of various orders have been established by different authors, some of which implicitly yield \(2\)- and \(3\)-dissections. However, these identities remain scattered throughout the literature. Our investigation begins with the identities presented in \cite{mortenson2024ramanujan}, which serve as a starting point for the results established in this paper.

For the convenience of the reader, we now recall the definition of an \(m\)-dissection~\cite[Section 1.9]{hir}.

Given a series
\[
A(q)=\sum_{n} a(n)q^n,
\]
and an integer \(m>1\), the \emph{\(m\)-dissection} of \(A(q)\) is the decomposition
\[
A(q)=A_0(q)+A_1(q)+\cdots+A_{m-1}(q),
\]
where, for each integer \(r\) with \(0\le r\le m-1\),
\[
A_r(q)
=
q^r \sum_{n} a(mn+r)q^{mn}.
\]

We begin by recalling a few identities recently established by Mortenson.

Before stating these identities, we introduce some standard notation. Throughout, we set
\[
q := e^{2\pi i \tau}, 
\qquad 
\text{where } 
\tau \in \mathbb{H} := \{ z \in \mathbb{C} \mid \operatorname{Im}(z) > 0 \}.
\]

We recall the $q$-Pochhammer symbols. For a nonnegative integer $n$,
\[
(x;q)_n := \prod_{i=0}^{n-1} (1-q^i x),
\qquad
(x;q)_\infty := \prod_{i\ge0} (1-q^i x),
\qquad |q|<1.
\]

Furthermore, we define
\[
\Theta(x;q)
:= (x)_\infty (q/x)_\infty (q)_\infty
= \sum_{n\in\mathbb{Z}} (-1)^n q^{\binom{n}{2}} x^n,
\]
where the equality between the infinite product and series representations follows from Jacobi's triple product identity
\cite[Theorem~1.3.3]{berndt2006number}.

Let $a$ and $m$ be integers with $m>0$. We adopt the notation of Mortenson \cite{mortenson2024ramanujan}.
\begin{equation}\label{theta1}
\Theta_{a,m} := \Theta(q^a;q^m),
\qquad
\Theta_m := \prod_{i\ge1} (1-q^{mi}),
\qquad
\overline{\Theta}_{a,m} := \Theta(-q^a;q^m).
\end{equation}

We now recall the following identities due to Mortenson (see Theorem~2.1 of \cite{mortenson2024ramanujan}).

\begin{align*}
q B_2(q) - 2A_2(-q^4)
&= q\,\dfrac{\Theta_2 \Theta_4^{5} \Theta_{16}^2}
           {\Theta_1^2 \Theta_8^{5}}, \\
q B_2(q) + \dfrac{1}{2}\mu_2(q^4)
&= \dfrac{1}{2}\,
   \dfrac{\Theta_2 \Theta_4^{3} \Theta_8}
        {\Theta_1^{2} \Theta_{16}^{2}}, \\
q B_2(q) + \dfrac{1}{4}\mu_2(q^4) - A_2(-q^4)
&= \dfrac{1}{4}\,
   \dfrac{\Theta_2^{6} \Theta_4^{3}}
        {\Theta_1^{4} \Theta_8^{4}}.
\end{align*}

Here, $A_2(q)$, $B_2(q)$, and $\mu_2(q)$ denote the second-order mock theta functions defined in \eqref{A2}--\eqref{mu2}.                          

At first glance, these identities do not appear to yield the
$2$-dissections of $B_2(q)$ directly. However, by applying the
following $2$-dissection formulas:
\begin{align}
\frac{1}{\Theta_1^2}
&=
\frac{\Theta_8^5}
{\Theta_2^5\,\Theta_{16}^2}
+2q\,\frac{\Theta_4^2\,\Theta_{16}^2}
{\Theta_2^5\,\Theta_8},
\label{eq:theta-inv1-sq}
\\[2mm]
\frac{1}{\Theta_1^4}
&=
\frac{\Theta_4^{14}}
{\Theta_2^{14}\,\Theta_8^4}
+4q\,\frac{\Theta_4^2\,\Theta_8^4}
{\Theta_2^{10}},
\label{eq:theta-inv1-fourth}
\end{align}
these identities can be proved using the $\theta$-Step Algorithm described in
\sect{meth}. We rewrite them as identities involving generalized eta-products on
$\Gamma_1(16)$ and $\Gamma_1(8)$, respectively, and find that
$B=-4$ and $B=-2$, respectively. We verify that the first five and three terms
of the $q$-expansions on both sides agree and additionally check the identities
up to $O(q^{36})$ and $O(q^{18})$, respectively. This proves
\eqn{theta-inv1-sq} and \eqn{theta-inv1-fourth}.

Substituting these identities into the formulas above yields the desired
$2$-dissections for $B_2(q)$, namely,

\begin{align*}
    B_2(q) & = \dfrac{2}{q}\, A_2(-q^4) + \dfrac{\Theta_4^5}{\Theta_2^4} + 2q\, \dfrac{\Theta_4^7\Theta_{16}^4}{\Theta_2^4\Theta_8^6},\\
    B_2(q) & = -\dfrac{1}{2q}\, \mu_2(q^4) + \dfrac{1}{2q}\, \dfrac{\Theta_4^3\Theta_8^6}{\Theta_2^4 \Theta_{16}^4} + \dfrac{\Theta_4^5}{\Theta_2^4},\\
    B_2(q) & = - \dfrac{1}{4q}\, \mu_2(q^4) + \dfrac{1}{q}\,A_2(-q^4) + \dfrac{1}{4q}\, \dfrac{\Theta_{4}^{17}}{\Theta_2^8\Theta_8^8} + \dfrac{\Theta_4^5}{\Theta_2^4}.
\end{align*}

There are many other identities in their paper (see Theorems~2.2, 2.3, and 2.6 of \cite{mortenson2024ramanujan}) that do not explicitly appear as $2$- or $3$-dissections. In this paper, we develop a systematic method for deriving such dissections.

Recently, the second author and Das~\cite{nath2025infinite} investigated infinite families 
of congruences for the second-order mock theta function $B_2(q)$. In their work, 
they called for analytic proofs of three identities involving the 
second-order mock theta functions $A_2(q)$, $B_2(q)$, and $\mu_2(q)$. Two of these identities are
\begin{align*}
    \sum_{n \geq 0} P_{A_2}(3n + 1) q^n 
    &= \dfrac{\Theta_{2}^4 \Theta_{3}^2 \Theta_{4}}
            {\Theta_{1}^5 \Theta_{6}},\\
    \sum_{n \geq 0} P_{\mu_2}(3n + 1) q^n 
    &= -\,\dfrac{\Theta_{2}^7 \Theta_{12}^2}
            {\Theta_{1} \Theta_{4}^6 \Theta_{6}}.
\end{align*}

They further verified, using symbolic computations in \texttt{Maple}, 
the following identity for $B_2(q)$, although a complete proof was not 
provided:
\begin{equation*}
    \sum_{n \geq 0} P_{B_2}(3n) q^n 
    = \dfrac{\Theta_{2}^7 \Theta_{3}^2}
            {\Theta_{1}^6 \Theta_{4} \Theta_{6}}.
\end{equation*}

Here $P_f(n)$ denotes the coefficient of $q^n$ in the $q$-series expansion of the corresponding mock theta function $f(q)$. We adopt this notation throughout the paper. In this paper, we derive the complete 3-dissections of all second-order mock theta functions, namely $A_2(q)$, $B_2(q)$, and $\mu_2(q)$.

Very recently, Cai et al.~\cite{cai2026some} also established these 
identities using the $(p,k)$-parametrization of theta functions. Our 
approach differs substantially: we combine symbolic computations in 
\texttt{Maple} with $q$-series techniques developed by Hickerson and 
Mortenson, together with methods from the theory of modular forms.

We believe that this framework provides a promising and largely underutilized method for exploring mock theta functions and related future research directions. The paper is organized as follows. In Section~\ref{sec:catres}, we give the definitions of the mock theta functions required throughout the paper (see Section~\ref{subsec:HMcat}) and state our main results (see Section~\ref{m-Dissections}). In Section~\ref{sec:proptm}, we discuss some properties of theta functions and Appell--Lerch sums. In Section~\ref{sec:meth}, we review a method used to prove theta function identities through an example, followed by a general strategy for finding and proving the $m$-dissection of a mock theta function, together with an example. In Section~\ref{sec:2disspf}, we provide the proofs of Theorems~\ref{thm:ord2diss2}, \ref{thm:ord6diss2}, and \ref{thm:ord8diss2}, which give complete 2-dissections of many mock theta functions of orders 2, 6, and 8. In Section~\ref{sec:3disspf}, we give the proofs of Theorems~\ref{thm:ord2diss3}, \ref{thm:ord6diss3}, and \ref{thm:ord8diss3}, which give the corresponding 3-dissections. Finally, in Section~\ref{Concluding remarks}, we present concluding remarks and further observations.

\section{Catalog of Mock Theta Functions and Dissection Results}
\label{sec:catres}           

In this section, we recall the definitions and Appell--Lerch sum identities for the second, sixth, and eighth order mock theta functions from Hickerson and Mortenson \cite[Section~5]{hickerson2014hecke}, and state our main $m$-dissection results. The Appell--Lerch sum $m(x,q,z)$ is defined below in Definition~\ref{def:1p1}.

\subsection{Hickerson and Mortenson's Mock Theta Function Catalog}
\label{subsec:HMcat}

\subsubsection*{The Second-Order Mock Theta Functions}
\begin{align}
    A_2(q) 
    &= \sum_{n \geq 0}\frac{q^{n+1}(-q^2;q^2)_n}{(q;q^2)_{n+1}}
       =: \sum_{n \geq 0} P_{A_2}(n) q^n 
       = -m(q,q^4,q^2), \label{A2}\\
    B_2(q) 
    &= \sum_{n \geq 0} \frac{q^n(-q;q^2)_n}{(q;q^2)_{n+1}}
       =: \sum_{n \geq 0} P_{B_2}(n) q^n 
       = -\frac{m(1,q^4,q^3)}{q},\label{B2}\\
    \mu_2(q) 
    &= \sum_{n \geq 0} 
       \frac{(-1)^n q^{n^2}(q;q^2)_n}{(-q^2;q^2)_n^2}
       =: \sum_{n \geq 0} P_{\mu_2}(n) q^n 
       = 4m(-q,q^4,-1) 
         - \frac{\Theta_2^8}{\Theta_1^3\Theta_4^4}.\label{mu2}
\end{align}

\subsubsection*{The Sixth-Order Mock Theta Functions}

\begin{align}
    \phi_6(q) 
    &= \sum_{n \geq 0}
       \frac{(-1)^n q^{n^2}(q;q^2)_n}{(-q;q)_{2n}}
       =: \sum_{n \geq 0} P_{\phi_6}(n) q^n = 2m(q,q^3,-1), \label{phi6}\\
    \psi_6(q) 
    &= \sum_{n \geq 0} 
       \frac{(-1)^n q^{(n+1)^2}(q;q^2)_n}{(-q;q)_{2n+1}}
       =: \sum_{n \geq 0} P_{\psi_6}(n) q^n = m(1,q^3,-q), \label{psi6}\\
    \rho_6(q) 
    &= \sum_{n \geq 0} 
       \frac{q^{n(n+1)/2}(-q;q)_n}{(q;q^2)_{n+1}}
       =: \sum_{n \geq 0} P_{\rho_6}(n) q^n = -\dfrac{1}{q}m(1,q^6,q), \label{rho6}\\
    \sigma_6(q) 
    &= \sum_{n \geq 0} 
       \frac{q^{(n+1)(n+2)/2}(-q;q)_n}{(q;q^2)_{n+1}}
       =: \sum_{n \geq 0} P_{\sigma_6}(n) q^n = -m(q^2,q^6,q), \label{sigma6}\\
    \lambda_6(q) 
    &= \sum_{n \geq 0} 
       \frac{(-1)^n q^n(q;q^2)_n}{(-q;q)_n}
       =: \sum_{n \geq 0} P_{\lambda_6}(n) q^n = \dfrac{2}{q}m(1,q^6,-q^2) + \dfrac{\Theta_1\Theta_3\Theta_{12}}{\Theta_4\Theta_6}, \label{lambda6}\\
    \mu_6(q)
    &= \dfrac{1}{2} + \dfrac{1}{2}\sum_{n \geq 0}\dfrac{(-1)^nq^{n+1}(1+q^n)(q;q^2)_n}{(-q;q)_{n+1}}
    =: \sum_{n \geq 0} P_{\mu_6}(n) q^n 
    = 2m(q^2,q^6,-1) - \dfrac{\Theta_1^2\Theta_3^2}{2\Theta_2^2\Theta_6}, \label{mu6}\\
    \psi_{-_6}(q)
    &=\sum_{n \geq 1} \dfrac{q^n(-q;q)_{2n-2}}{(q;q^2)}
    =: \sum_{n \geq 0} P_{\psi_{-_6}}(n) q^n
    =-\dfrac{1}{2}m(1,q^3,q) + \dfrac{q}{2}\dfrac{\Theta_6^3}{\Theta_1\Theta_2}. \label{psiminus6}
\end{align}

\subsubsection*{The Eighth-Order Mock Theta Functions}

\begin{align}
    S_{0,8}(q) 
&= \sum_{n \ge 0} \frac{q^{n^2}(-q;q^2)_n}{(-q^2;q^2)_n} =: \sum_{n \geq 0} P_{S_{0,8}}(n) q^n = 2m(-q^3,q^8,-1) 
  + q\dfrac{\overline{\Theta}_{1,8}\Theta_{2,8}^2}{\Theta_{3,8}^2}, \label{S08}\\
  S_{1,8}(q)
&= \sum_{n \ge 0} \frac{q^{n(n+2)}(-q;q^2)_n}{(-q^2;q^2)_n}=: \sum_{n \geq 0} P_{S_{1,8}}(n) q^n = -\dfrac{2}{q}m(-q,q^8,-1)
  + \dfrac{1}{q}\dfrac{\overline{\Theta}_{3,8}\Theta_{2,8}^2}{\Theta_{1,8}^2}, \label{S18}\\
  U_{0,8}(q)
&= \sum_{n \ge 0} \frac{q^{n^2}(-q;q^2)_n}{(-q^4;q^4)_n} =: \sum_{n \geq 0} P_{U_{0,8}}(n) q^n
= 2m(-q,q^4,-1),\label{U08}\\
U_{1,8}(q)
&= \sum_{n \ge 0} \frac{q^{(n+1)^2}(-q;q^2)_n}{(-q^2;q^4)_{n+1}} =: \sum_{n \geq 0} P_{U_{1,8}}(n) q^n
= -m(-q,q^4,-q^2),\label{U18}\\
    V_{0,8}(q) 
    &= -1 + 2 \sum_{n \ge 0} 
       q^{n^2} \frac{(-q; q^2)_n}{(q; q^2)_n}
       =: \sum_{n \geq 0} P_{V_{0,8}}(n) q^n = -\dfrac{2}{q}m(1,q^8,q) - \dfrac{\Theta_2^3\Theta_4}{\Theta_1^2\Theta_8},\label{V08}\\
    V_{1,8}(q) 
    &= \sum_{n \ge 0} 
       \frac{q^{(n+1)^2}(-q;q^2)_n}{(q;q^2)_{n+1}}
       =: \sum_{n \geq 0} P_{V_{1,8}}(n) q^n = -m(q^2,q^8,q).\label{V18}
\end{align}

\subsection{The $m$-Dissection Results}\label{m-Dissections}
We state the complete $m$-dissections  for the second, 
sixth and eigth order mock theta functions for both $m=2$ and $3$.

\subsubsection*{The $2$-Dissections}

\begin{theorem}
\label{thm:ord2diss2}
The following representations hold:
\begin{align}
A_2(q)
&=
\frac{q}{2}\,S_{1,8}(q^2)
-\frac{1}{2}\,S_{0,8}(q^2)
+\frac{\Theta_4^9\,\overline{\Theta}_{6,16}}
{2\,\Theta_2^5\,\Theta_8^4}
+\frac{q}{2}\,
\frac{\Theta_4^9\,\overline{\Theta}_{2,16}}
{\Theta_2^5\,\Theta_8^4},
\label{eq:A2diss2}
\\[2mm]
B_2(q)
&=
-\frac{U_{0,8}(q^4)}{q}
+\frac{1}{q}\,
\frac{\Theta_8^6\,\Theta_{16}}
{\Theta_2^2\,\Theta_4^2\,\Theta_{32}^2}
+\frac{\Theta_4^5}{\Theta_2^4},
\label{eq:B2diss2}
\\[2mm]
\mu_2(q)
&=
2\,S_{0,8}(q^2)
+2q\,S_{1,8}(q^2)
-\dfrac{\Theta_2\Theta_8^5\overline{\Theta}_{6,16}}{\Theta_4^4\Theta_{16}^2} - q\,\dfrac{\Theta_2\Theta_8^5\overline{\Theta}_{2,16}}{\Theta_4^4\Theta_{16}^2} \nonumber\\
&-2q\,\dfrac{\Theta_2\Theta_{16}^2\overline{\Theta}_{6,16}}{\Theta_4^2\Theta_8}-2q^2\,\dfrac{\Theta_2\Theta_{16}^2\overline{\Theta}_{2,16}}{\Theta_4^4\Theta_8}.
\label{eq:mu2diss2}
\end{align}
\end{theorem}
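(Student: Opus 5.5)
The plan is the Hickerson--Mortenson transformation-formula method, run in three steps for each of the three functions. First, rewrite each function as an Appell--Lerch sum via \eqref{A2}--\eqref{mu2}. Second, dissect $m(x,q,z)$ by splitting its defining sum over $r$ into even and odd $r$. Third, convert the resulting Appell--Lerch sums in $q^2$ (or $q^4$) into the eighth-order functions through \eqref{S08}, \eqref{S18} and \eqref{U08}, leaving only theta quotients. Those theta quotients are then identified and proved with the $\theta$-Step Algorithm of Section~\sect{meth}.

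\emph{Case $A_2$.} Start from $A_2(q)=-m(q,q^4,q^2)$. I will use the standard splitting formula for $m(x,q,z)$ with $n=2$ (Hickerson--Mortenson, Theorem 3.5 type, recalled in Section~\ref{sec:proptm}):
\[
m(x,q,z)=\sum_{r=0}^{1} q^{-\binom{r+1}{2}}(-x)^r\, m\bigl(-q^{\binom{2}{2}-2+2r}(-x)^{2},q^{4},z'\bigr)+\text{theta quotient}.
\]
Concretely, this expresses $m(q,q^4,z)$ through $m(\pm q^{a},q^{16},z')$. Next I apply the shift and flip identities $m(x,q,z)=m(x,q,x^{-1}z^{-1})$ and $m(qx,q,z)=1-xm(x,q,z)$, together with the change-of-$z$ formula (whose error term is an explicit theta quotient). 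This brings the pieces to the exact forms $m(-q^6,q^{16},-1)$ and $m(-q^2,q^{16},-1)$, which by \eqref{S08}, \eqref{S18} with $q\to q^2$ are $\tfrac12 S_{0,8}(q^2)$ and $-\tfrac{q^2}{2}S_{1,8}(q^2)$ up to theta quotients.

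\emph{Cases $B_2$ and $\mu_2$.} For $B_2=-q^{-1}m(1,q^4,q^3)$ the even/odd split produces $m(-q^4,q^{16},\cdot)$, which matches $U_{0,8}(q^4)=2m(-q^4,q^{16},-1)$ after the change-of-$z$ step. For $\mu_2$ I apply the same procedure to $4m(-q,q^4,-1)$ and also dissect $\Theta_2^8/(\Theta_1^3\Theta_4^4)$. For that dissection I use the $2$-dissection of $1/\Theta_1^3$, obtainable from \eqref{eq:theta-inv1-sq} times the classical dissection of $1/\Theta_1$. In every case I then collect terms by parity of the exponent. Since the mock parts are manifestly even or odd (being functions of $q^2$ times $1$ or $q$), the theta parts must split accordingly. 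This yields explicit but messy theta-quotient candidates for each component.

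\emph{Main obstacle.} The hard step is the final verification that these messy sums of theta quotients equal the compact products in \eqref{eq:A2diss2}--\eqref{eq:mu2diss2}. I will divide each claimed identity by one term to obtain a relation among generalized eta-quotients on $\Gamma_1(16)$ or $\Gamma_1(32)$ (for $B_2$, because of $\Theta_{32}$). I then run the $\theta$-Step Algorithm: compute the order bound $B$ at the cusps, and check the $q$-expansions to the Sturm-type bound (and well beyond, as in the paper's earlier example). A safer fallback, if the intermediate theta terms become unwieldy, is to work directly with differences. Each claimed dissection minus the Appell--Lerch expression can be shown to be a holomorphic-or-bounded modular function, since the mock parts cancel exactly by the transformation formulas. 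Its vanishing then reduces to a finite coefficient check. For $A_2$ and $\mu_2$ the parity bookkeeping must also be checked carefully, since the $\overline{\Theta}_{6,16}$ and $\overline{\Theta}_{2,16}$ factors are even in $q$ and the powers $q$, $q^2$ in front select the components.
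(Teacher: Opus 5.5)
Your plan is essentially the paper's proof: Corollary~\ref{cor:cor3p6} with $n=2$ and $z'=-1$ (after $q\mapsto q^4$), then Proposition~\ref{propo:prop3p1} to normalize, matching with $S_{0,8}(q^2)$, $S_{1,8}(q^2)$ and $U_{0,8}(q^4)$ from the catalog, a $\theta$-Step verification on $\Gamma_1(32)$ of the leftover theta identity, and the dissections \eqref{eq:eta1diss2} and \eqref{eq:theta-inv1-sq} to split the remaining eta-quotients. One difference: the paper first collapses the $A_2$ remainder to $\Theta_4^9/(2\Theta_1\Theta_2^3\Theta_8^4)$, and shows that the theta terms accompanying $2S_{0,8}(q^2)+2qS_{1,8}(q^2)$ cancel, leaving $-\Theta_2^8/(\Theta_1^3\Theta_4^4)$; only then does it dissect, rather than checking against the final forms.

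Three small corrections.
\begin{enumerate}
\item In the $n=2$ formula the argument is $-q^{1-2r}x^2$, not $-q^{2r-1}x^2$. As you wrote it, the coefficients $1$ and $-q^{-3}$ attached to $m(-q^6,q^{16},-1)$ and $m(-q^{-2},q^{16},-1)$ are swapped.
\item Turning $m(-q^{-2},q^{16},-1)$ and $m(-q^{-4},q^{16},-1)$ into $m(-q^{2},q^{16},-1)$ and $m(-q^{4},q^{16},-1)$ needs the inversion \eqref{eq:m_inversion}, which is not among the identities you list.
\item The last term of \eqref{eq:mu2diss2} must have $\Theta_4^2$, not $\Theta_4^4$, in the denominator. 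This is what the product of the two dissections gives, and weight homogeneity forces it, so a literal check against the printed form fails at $q^6$.
\end{enumerate}
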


\begin{theorem}
\label{thm:ord6diss2}
The following representations hold:
\begin{align}
\rho_6(q)
&=
-\frac{U_{0,8}(q^6)}{q}
+\frac{1}{q}\,
\frac{\Theta_4\,\Theta_8^2\,\Theta_{12}^3}
{\Theta_2^2\,\Theta_6\,\Theta_{16}\,\Theta_{48}}
+\frac{\Theta_4^3\,\Theta_6^2}
{\Theta_2^3\,\Theta_{12}},
\label{eq:rho6diss2}
\\[2mm]
\lambda_6(q)
&=
\frac{\mu_2(q^6)}{q}
+\frac{\Theta_4^3\,\Theta_6^2}
{\Theta_2^3\,\Theta_{12}}
-\frac{\Theta_8^4\,\Theta_{12}^5}
{q\,\Theta_2\,\Theta_4^3\,\Theta_{24}^4},
\label{eq:lambda6diss2}
\\[2mm]
\sigma_6(q)
&=
-\frac{\phi_6(q^2)}{2}
+\frac{q^2\,\Theta_4^8\,\Theta_6\,\Theta_{24}^4}
{2\,\Theta_2^4\,\Theta_8^4\,\Theta_{12}^4}
+\frac{q\,\Theta_4^2\,\Theta_{12}^2}
{\Theta_2^2\,\Theta_6}
+\frac{\Theta_8^4\,\Theta_{12}^8}
{2\,\Theta_4^4\,\Theta_6^3\,\Theta_{24}^4},
\label{eq:sigma6diss2}
\\[2mm]
\mu_6(q)
&=
\phi_6(q^2)
-\frac{\Theta_8^4\,\Theta_{12}^8}
{2\,\Theta_4^4\,\Theta_6^3\,\Theta_{24}^4}
+q\,\frac{\Theta_4^2\,\Theta_{12}^2}
{\Theta_2^2\,\Theta_6}
-\frac{q^2}{2}\,
\frac{\Theta_4^8\,\Theta_6\,\Theta_{24}^4}
{\Theta_2^4\,\Theta_8^4\,\Theta_{12}^4}.
\label{eq:mu6diss2}
\end{align}
\end{theorem}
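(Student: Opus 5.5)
Each identity in Theorem~\ref{thm:ord6diss2} will be handled the same way. First convert both sides to Appell--Lerch sums via the catalog in Section~\ref{subsec:HMcat}. Then split the Appell--Lerch sum on the left into pieces in $q^2$ by the standard $n$-fold transformation (Hickerson--Mortenson, parity split of the summation index). Finally, match the resulting mock parts against the mock function on the right, leaving a pure theta function identity to be proved by the $\theta$-Step Algorithm of Section~\ref{sec:meth}.

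\textbf{The identity \eqref{eq:rho6diss2}.} We have $\rho_6(q)=-q^{-1}m(1,q^6,q)$ and $U_{0,8}(q^6)=2m(-q^6,q^{24},-1)$. Splitting $m(x,q,z)$ into its even and odd terms, i.e.\ applying the formula $m(x,q,z)\to m(-qx^2,q^4,\cdot)$ together with a theta quotient, expresses $m(1,q^6,q)$ as a combination of sums $m(-q^6,q^{24},z_i)$ with $z_i$ free. We then use the change-of-$z$ formula
\[
m(x,q,z_1)-m(x,q,z_0)=\text{(explicit theta quotient)}
\]
to move every $z_i$ to $-1$. The difference between the two sides then reduces to an explicit sum of theta quotients, which must equal the two eta-quotient terms.

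\textbf{The identity \eqref{eq:lambda6diss2}.} Here $\lambda_6(q)=2q^{-1}m(1,q^6,-q^2)+\Theta_1\Theta_3\Theta_{12}/(\Theta_4\Theta_6)$, and $\mu_2(q^6)=4m(-q^6,q^{24},-1)-\Theta_{12}^8/(\Theta_6^3\Theta_{24}^4)$, so the same procedure applies. The theta part $\Theta_1\Theta_3\Theta_{12}/(\Theta_4\Theta_6)$ must also be 2-dissected. For this I would use the known dissections of $\Theta_1\Theta_3$ (or of $\varphi(-q)$-type products), proved with the same $\theta$-Step method as \eqref{eq:theta-inv1-sq}.

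\textbf{The identities \eqref{eq:sigma6diss2} and \eqref{eq:mu6diss2}.} For $\sigma_6(q)=-m(q^2,q^6,q)$ and $\mu_6$, the target mock function is $\phi_6(q^2)=2m(q^2,q^6,-1)$, which has the same $x$ and modulus as $\sigma_6$. So only a change of $z$ from $q$ to $-1$ is needed, and no parity split. This gives $\sigma_6(q)+\tfrac12\phi_6(q^2)$ as a single theta quotient in $q$, which must then be 2-dissected. The identity for $\mu_6(q)=2m(q^2,q^6,-1)-\Theta_1^2\Theta_3^2/(2\Theta_2^2\Theta_6)$ is immediate in its mock part. It only requires the 2-dissection of $\Theta_1^2\Theta_3^2/\Theta_2^2$, i.e.\ of $(\Theta_1\Theta_3)^2$.

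Comparing the two formulas suggests that $\mu_6+\sigma_6$ is a pure theta function. This gives a useful cross-check: adding \eqref{eq:sigma6diss2} and \eqref{eq:mu6diss2} leaves $\tfrac12\phi_6(q^2)+2q\,\Theta_4^2\Theta_{12}^2/(\Theta_2^2\Theta_6)$. The $\phi_6(q^2)$ survives, so the sum is not purely theta; this is consistent with $\mu_6=\phi_6(q^2)+\dots$ and $\sigma_6=-\tfrac12\phi_6(q^2)+\dots$.

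\textbf{Theta identities and the expected obstacle.} In every case the remaining claim is an identity among generalized eta-quotients. I would rewrite each as a sum of modular functions on $\Gamma_1(N)$ with $N=48$ (or $24$), compute the Sturm-type bound $B$ from the orders at the cusps via the $\theta$-Step Algorithm, and verify enough $q$-expansion coefficients. The main obstacle is the theta-quotient bookkeeping. The output of the transformation formulas is a sum of several theta quotients with arguments like $\Theta(q^a;q^{24})$, and these must collapse into the stated even and odd components. In particular, each parity component must be identified separately. I expect to first separate the even and odd parts by symmetry in $q\to -q$, and then prove each component identity independently with the modular function algorithm.
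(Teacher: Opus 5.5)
Your plan is essentially the paper's proof. The paper uses Corollary~\ref{cor:cor3p6} with $n=2$, $z'=-1$ and the inversion formula \eqref{eq:m_inversion} for $\rho_6$ and $\lambda_6$, Theorem~\ref{thm:thm3p3} to move $z=q$ to $z=-1$ for $\sigma_6$, and direct catalog substitution for $\mu_6$; it then proves the residual theta identities on $\Gamma_1(48)$ and $\Gamma_1(24)$ with the $\theta$-Step algorithm, including the 2-dissections of $\Theta_1\Theta_3$ and $1/(\Theta_1\Theta_3)$ that finish $\lambda_6$, $\sigma_6$ and $\mu_6$. The differences are cosmetic: with $z'=-1$ no further change of $z$ is needed for $\rho_6$, and the paper proves each residual identity in one piece rather than first separating even and odd components.
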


\begin{theorem}
\label{thm:ord8diss2}
The following representations hold:
\begin{align}
V_{0,8}(q)
&=
2q\,B_2(q^2)
+\frac{\Theta_4^8}
{\Theta_2^4\,\Theta_8^3},
\label{eq:V08diss2}
\\[2mm]
V_{1,8}(q)
&=
A_2(q^2)
+q\,\frac{\Theta_8^3}
{\Theta_2\,\Theta_4}.
\label{eq:V18diss2}
\end{align}
\end{theorem}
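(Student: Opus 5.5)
We prove Theorem~\ref{thm:ord8diss2} by working with the Appell--Lerch sum representations \eqref{V08} and \eqref{V18} and the corresponding ones for $A_2$ and $B_2$ in \eqref{A2}--\eqref{B2}. Replacing $q$ by $q^2$ gives
\[
2qB_2(q^2) = -\frac{2}{q}\,m(1,q^8,q^6), \qquad A_2(q^2) = -m(q^2,q^8,q^4),
\]
while
\[
V_{0,8}(q) = -\frac{2}{q}\,m(1,q^8,q) - \frac{\Theta_2^3\Theta_4}{\Theta_1^2\Theta_8}, \qquad V_{1,8}(q) = -m(q^2,q^8,q).
\]
In each case the Appell--Lerch sums share the same $x$ and base $q^8$ and differ only in $z$. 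The plan is therefore to apply the change-of-$z$ formula of Hickerson--Mortenson (recalled in Section~\ref{sec:proptm}):
\[
m(x,q,z_1)-m(x,q,z_0) = \frac{z_0 J_1^3\,\Theta(z_1/z_0;q)\,\Theta(xz_0z_1;q)}{\Theta(z_0;q)\Theta(z_1;q)\Theta(xz_0;q)\Theta(xz_1;q)}.
\]
This converts each of \eqref{eq:V08diss2} and \eqref{eq:V18diss2} into a pure theta function identity.

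\textbf{Identity \eqref{eq:V18diss2}.} Take $x=q^2$, base $q^8$, $z_0=q^4$, $z_1=q$. Then
\[
V_{1,8}(q)-A_2(q^2) = -\bigl(m(q^2,q^8,q)-m(q^2,q^8,q^4)\bigr),
\]
and the right side equals $-q^4\Theta_8^3\,\Theta(q^{-3};q^8)\Theta(q^7;q^8)$ divided by $\Theta(q^4;q^8)\Theta(q;q^8)\Theta(q^6;q^8)\Theta(q^3;q^8)$. Using $\Theta(q^{-3};q^8) = -q^{-3}\Theta_{3,8}$ and $\Theta_{7,8}=\Theta_{1,8}$, the factors $\Theta_{1,8}\Theta_{3,8}$ cancel. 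What remains is $q\,\Theta_8^3/(\Theta_{4,8}\Theta_{6,8})$. Expressing $\Theta_{4,8}$ and $\Theta_{6,8}=\Theta_{2,8}$ as eta quotients, it should reduce to $q\Theta_8^3/(\Theta_2\Theta_4)$, as claimed. This case looks essentially mechanical; the only care needed is with signs and the power of $q$.

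\textbf{Identity \eqref{eq:V08diss2}.} Take $x=1$, base $q^8$, $z_0=q^6$, $z_1=q$. Then $V_{0,8}(q)-2qB_2(q^2)$ equals $-\frac{2}{q}$ times an explicit theta quotient in $\Theta_{5,8},\Theta_{7,8},\Theta_{6,8},\Theta_{1,8}$, minus $\Theta_2^3\Theta_4/(\Theta_1^2\Theta_8)$. After simplifying with $\Theta(q^{-5};q^8)=-q^{-5}\Theta_{3,8}$ and similar relations, this is a sum of two theta quotients. One of them has $\Theta_1^2$ in the denominator and is not yet visibly even in $q$. We must show the sum equals $\Theta_4^8/(\Theta_2^4\Theta_8^3)$. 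The plan is to substitute the $2$-dissection \eqref{eq:theta-inv1-sq} for $1/\Theta_1^2$, or to rewrite everything as generalized eta-quotients on $\Gamma_1(16)$. Then we prove the resulting identity with the $\theta$-Step Algorithm of Section~\ref{sec:meth}: compute the Sturm-type bound $B$, and check enough initial coefficients of the $q$-expansion.

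The main obstacle is this last theta identity. The odd parts of the two terms must cancel exactly, and the even parts must combine into a single eta-quotient. If the direct route is messy, I would first try to match the odd parts via the second term of \eqref{eq:theta-inv1-sq}. I would then verify the remaining even-part identity, in $q^2$, as a modular function identity on $\Gamma_0(8)$ by checking finitely many coefficients.
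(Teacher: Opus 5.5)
Your proposal is correct and follows essentially the same route as the paper. The paper applies Theorem~\ref{thm:thm3p3} exactly as you do: it moves $m(q^2,q^8,q)$ to $m(q^2,q^8,q^4)$ to get $A_2(q^2)+q\Theta_8^3/(\Theta_2\Theta_4)$, and it moves $m(1,q^8,q)$ to $m(1,q^8,q^6)$. The second step leaves the residual identity $\frac{2\Theta_4^2\Theta_8\Theta_{3,8}}{\Theta_2^2\Theta_{1,8}}-\frac{\Theta_2^3\Theta_4}{\Theta_1^2\Theta_8}=\frac{\Theta_4^8}{\Theta_2^4\Theta_8^3}$, which the paper proves directly with the $\theta$-Step algorithm on $\Gamma_1(8)$ (where $B=-1$), so no dissection of $1/\Theta_1^2$ is needed. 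Your fallback of isolating odd parts would also require a $2$-dissection of $\Theta_{3,8}/\Theta_{1,8}$, since that term is not even in $q$ either.
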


The following corollaries are immediate consequences of the preceding theorems.

\begin{corollary}
\label{cor:A2diss2}
The following identities hold:
\begin{align}
\sum_{n \ge 0} P_{A_2}(2n)\,q^n
&=
-\frac{S_{0,8}(q)}{2}
+\frac{\Theta_2^9\,\overline{\Theta}_{3,8}}
{2\,\Theta_1^5\,\Theta_4^4},
\label{eq:A2n}
\\[2mm]
\sum_{n \ge 0} P_{A_2}(2n+1)\,q^n
&=
\frac{S_{1,8}(q)}{2}
+\frac{\Theta_2^9\,\overline{\Theta}_{1,8}}
{2\,\Theta_1^5\,\Theta_4^4}.
\label{eq:A2nplus1}
\end{align}
\end{corollary}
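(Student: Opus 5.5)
This follows directly from the 2-dissection \eqref{eq:A2diss2} in Theorem~\ref{thm:ord2diss2}, which we take as established. The plan is to check that its right-hand side is already sorted into even and odd powers of $q$, and then replace $q^2$ by $q$.

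\textbf{Step 1: every auxiliary series is a series in $q^2$.} $S_{0,8}(q^2)$ and $S_{1,8}(q^2)$ are power series in $q^2$ by definition. The $\Theta_m$ with $m$ even are products in $q^2$. The factor $\overline{\Theta}_{a,16}=\Theta(-q^a;q^{16})$ with $a=2,6$ has series expansion $\sum_n (-1)^n q^{16\binom{n}{2}}(-q^a)^n$, and all exponents here are even. Hence in
\[
A_2(q)=\Bigl(-\tfrac12 S_{0,8}(q^2)+\frac{\Theta_4^9\,\overline{\Theta}_{6,16}}{2\,\Theta_2^5\,\Theta_8^4}\Bigr)
+q\Bigl(\tfrac12 S_{1,8}(q^2)+\frac{\Theta_4^9\,\overline{\Theta}_{2,16}}{2\,\Theta_2^5\,\Theta_8^4}\Bigr)
\]
each bracket is a power series in $q^2$. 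So the first group is exactly $A_{2,0}(q)=\sum_n P_{A_2}(2n)q^{2n}$, and the second group, including its prefactor $q$, is $A_{2,1}(q)$. This uses uniqueness of the coefficients of a power series.

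\textbf{Step 2: substitute $q^2\mapsto q$.} Under this replacement $\Theta_{2k}$ becomes $\Theta_k$. Also $\overline{\Theta}_{6,16}=\Theta(-q^6;q^{16})$ becomes $\Theta(-q^3;q^8)=\overline{\Theta}_{3,8}$, and $\overline{\Theta}_{2,16}$ becomes $\overline{\Theta}_{1,8}$. The even part then gives \eqref{eq:A2n}. For the odd part, we divide by $q$ before substituting, which gives \eqref{eq:A2nplus1}.

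\textbf{Where the real work lies.} Nothing here is an obstacle; it is pure bookkeeping. The only point needing care is the parity check on $\overline{\Theta}_{a,16}$ in Step 1. The substantive content is entirely in \eqref{eq:A2diss2}, whose proof is deferred to Section~\ref{sec:2disspf}.
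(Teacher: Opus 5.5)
Your proposal is correct and matches the paper, which records this corollary as an immediate consequence of \eqref{eq:A2diss2}. Your parity check (in particular that $\overline{\Theta}_{2,16}$ and $\overline{\Theta}_{6,16}$ are series in $q^2$) and the substitution $q^2\mapsto q$, after dividing the odd part by $q$, are exactly the bookkeeping the paper leaves implicit.
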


\begin{corollary}
\label{cor:B2ord}
The following identities hold:
\begin{align}
\sum_{n \ge 0} P_{B_2}(2n)\,q^n
&=
\frac{\Theta_2^5}
{\Theta_1^4},
\label{eq:B2n}
\\[2mm]
\sum_{n \ge 0} P_{B_2}(2n+1)\,q^n
&=
-\frac{U_{0,8}(q^2)}{q}
+\frac{1}{q}\,
\frac{\Theta_4^6\,\Theta_8}
{\Theta_1^2\,\Theta_2^2\,\Theta_{16}^2}.
\label{eq:B2nplus1}
\end{align}
\end{corollary}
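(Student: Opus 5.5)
Corollary~\ref{cor:B2ord} will follow from the $2$-dissection \eqref{eq:B2diss2} of Theorem~\ref{thm:ord2diss2} by sorting terms by the parity of their exponents. Every $\Theta_m$ with $m$ even is a power series in $q^2$. Hence $\Theta_4^5/\Theta_2^4$ is even in $q$.

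The other two terms of \eqref{eq:B2diss2} also have the same parity as each other. Both $U_{0,8}(q^4)$ and $\Theta_8^6\Theta_{16}/(\Theta_2^2\Theta_4^2\Theta_{32}^2)$ are power series in $q^2$, so after multiplying by $q^{-1}$ they involve only odd powers of $q$. The apparent $q^{-1}$ must cancel, because $B_2(q)$ is a power series. We can confirm this directly: $U_{0,8}(0)=1$ from its series definition \eqref{U08}, and the eta quotient has constant term $1$, so the $q^{-1}$ coefficients cancel.

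\textbf{Even part.} Extracting the even powers of $q$ from \eqref{eq:B2diss2} gives
\[
\sum_{n\ge0}P_{B_2}(2n)q^{2n}=\frac{\Theta_4^5}{\Theta_2^4}.
\]
Replacing $q^2$ by $q$ turns each $\Theta_{2m}$ into $\Theta_m$, which yields \eqref{eq:B2n}.

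\textbf{Odd part.} The odd-power terms give
\[
\sum_{n\ge0}P_{B_2}(2n+1)q^{2n+1}=-\frac{U_{0,8}(q^4)}{q}+\frac{1}{q}\frac{\Theta_8^6\Theta_{16}}{\Theta_2^2\Theta_4^2\Theta_{32}^2}.
\]
Dividing by $q$ gives
\[
\sum_{n\ge0}P_{B_2}(2n+1)q^{2n}=-\frac{U_{0,8}(q^4)}{q^2}+\frac{1}{q^2}\frac{\Theta_8^6\Theta_{16}}{\Theta_2^2\Theta_4^2\Theta_{32}^2}.
\]
Substituting $q^2\mapsto q$ then gives
\[
-\frac{U_{0,8}(q^2)}{q}+\frac{1}{q}\frac{\Theta_4^6\Theta_8}{\Theta_1^2\Theta_2^2\Theta_{16}^2},
\]
which is exactly \eqref{eq:B2nplus1}.

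All of the substantive work therefore lies in Theorem~\ref{thm:ord2diss2}, which we take as given. The only point needing care is the parity bookkeeping, in particular that $U_{0,8}(q^4)$ contributes solely even exponents. This holds because $U_{0,8}$ is an ordinary power series in its argument. No further obstacle is expected.
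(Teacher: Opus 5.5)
Your proposal is correct and follows the paper's own route: the paper states Corollary~\ref{cor:B2ord} as an immediate consequence of \eqref{eq:B2diss2}, obtained by taking $\Theta_4^5/\Theta_2^4$ as the even part and the two $q^{-1}(\cdots)$ terms as the odd part, then replacing $q^2$ by $q$. Your check that the $q^{-1}$ terms cancel (both $U_{0,8}(q^4)$ and the eta quotient have constant term $1$) is a good sanity check, and you rightly use the normalization $-U_{0,8}(q^4)/q$ from the theorem statement rather than the $q^{-2}$ that appears as a typo in the paper's proof of \eqref{eq:B2diss2}.
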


\begin{corollary}
\label{cor:mu2ord}
The following identities hold:
\begin{align}
\sum_{n \ge 0} P_{\mu_2}(2n)\,q^n
&=
2\,S_{0,8}(q)-\dfrac{\Theta_1\Theta_4^5\overline{\Theta}_{3,8}}{\Theta_2^4\Theta_8^2} - 2q\,\dfrac{\Theta_1\Theta_8^2\overline{\Theta}_{1,8}}{\Theta_2^4\Theta_4},
\label{eq:mu2n}
\\[2mm]
\sum_{n \ge 0} P_{\mu_2}(2n+1)\,q^n
&=
2\,S_{1,8}(q) - \dfrac{\Theta_1\Theta_4^5\overline{\Theta}_{1,8}}{\Theta_2^4\Theta_8^2} - 2\,\dfrac{\Theta_1\Theta_8^2\overline{\Theta}_{3,8}}{\Theta_2^2\Theta_4}.
\label{eq:mu2nplus1}
\end{align}
\end{corollary}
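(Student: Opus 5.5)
The statement to prove is Corollary~\ref{cor:mu2ord}. I would read it off from \eqref{eq:mu2diss2} in Theorem~\ref{thm:ord2diss2}, which I take as given.

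First, I would sort the terms of \eqref{eq:mu2diss2} by parity of exponent. The series $S_{0,8}(q^2)$, $S_{1,8}(q^2)$, every $\Theta_{2k}$ and every $\overline{\Theta}_{a,16}$ with $a$ even are power series in $q^2$. So each term on the right-hand side of \eqref{eq:mu2diss2} is $q^{e}$ times a series in $q^2$, with the prefactor exponent $e\in\{0,1,2\}$.
\begin{itemize}
\item Terms with $e=0$ or $e=2$ make up the even part $\sum_n P_{\mu_2}(2n)q^{2n}$.
\item Terms with $e=1$ make up the odd part $q\sum_n P_{\mu_2}(2n+1)q^{2n}$.
\end{itemize}

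Concretely, the even part is
\[
2S_{0,8}(q^2)-\frac{\Theta_2\Theta_8^5\overline{\Theta}_{6,16}}{\Theta_4^4\Theta_{16}^2}-2q^2\frac{\Theta_2\Theta_{16}^2\overline{\Theta}_{2,16}}{\Theta_4^4\Theta_8}.
\]
The odd part is
\[
2qS_{1,8}(q^2)-q\frac{\Theta_2\Theta_8^5\overline{\Theta}_{2,16}}{\Theta_4^4\Theta_{16}^2}-2q\frac{\Theta_2\Theta_{16}^2\overline{\Theta}_{6,16}}{\Theta_4^2\Theta_8}.
\]

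Next, I would replace $q^2$ by $q$ in the even part. Since $\Theta_{2m}\mapsto\Theta_m$ and $\overline{\Theta}_{2a,16}\mapsto\overline{\Theta}_{a,8}$, this gives exactly \eqref{eq:mu2n}.

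For the odd part, I would divide by $q$ and then replace $q^2$ by $q$. This gives
\[
2S_{1,8}(q)-\frac{\Theta_1\Theta_4^5\overline{\Theta}_{1,8}}{\Theta_2^4\Theta_8^2}-2\frac{\Theta_1\Theta_8^2\overline{\Theta}_{3,8}}{\Theta_2^2\Theta_4},
\]
which is \eqref{eq:mu2nplus1}.

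The only possible obstacle is bookkeeping. The last two terms of \eqref{eq:mu2diss2} carry different powers of $\Theta_4$ (namely $\Theta_4^2$ and $\Theta_4^4$), and the corollary inherits exactly that asymmetry: $\Theta_2^4$ appears in \eqref{eq:mu2n} and $\Theta_2^2$ in \eqref{eq:mu2nplus1}. So I would check term-by-term that each exponent transfers correctly. As a consistency check, I would compare a few coefficients of each side with the series definition \eqref{mu2}.
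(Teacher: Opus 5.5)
Your method is the paper's own: it calls the corollary immediate from \eqref{eq:mu2diss2}, and your parity sorting, including $\overline{\Theta}_{2a,16}\mapsto\overline{\Theta}_{a,8}$, is carried out correctly. The gap is the asymmetry you flagged. It is a misprint in \eqref{eq:mu2diss2}, not a feature to inherit. Reading it off faithfully therefore "proves" a false identity: \eqref{eq:mu2n} as printed does not hold. By \eqref{mu2} and \eqref{U08}, $\mu_2(q)=2U_{0,8}(q)-\Theta_2^8/(\Theta_1^3\Theta_4^4)$, and the paper dissects the theta part by multiplying \eqref{eq:theta-inv1-sq} and \eqref{eq:eta1diss2}. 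The $q^2$ cross term of that product is
\[
\frac{\Theta_2^8}{\Theta_4^4}\cdot 2q\,\frac{\Theta_4^2\Theta_{16}^2}{\Theta_2^5\Theta_8}\cdot q\,\frac{\overline{\Theta}_{2,16}}{\Theta_2^2}
=2q^2\,\frac{\Theta_2\Theta_{16}^2\overline{\Theta}_{2,16}}{\Theta_4^2\Theta_8}.
\]
So both cross terms carry $\Theta_4^2$, not $\Theta_4^4$. After $q^2\mapsto q$, the last term of \eqref{eq:mu2n} must be $-2q\,\Theta_1\Theta_8^2\overline{\Theta}_{1,8}/(\Theta_2^2\Theta_4)$.

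Your proposed coefficient check would expose this, but only if you take it far enough. With $\Theta_2^4$, the right side of \eqref{eq:mu2n} agrees with $1+q-q^2$ through $q^2$. Its $q^3$-coefficient, however, is $-2+3-4=-3$: the three contributions come from $2S_{0,8}(q)$, from minus the first theta quotient, and from minus the second. From the series \eqref{mu2}, the $n=1$ summand contributes $3$ and the $n=2$ summand contributes $-2$ to the coefficient of $q^6$, so $P_{\mu_2}(6)=1$. With $\Theta_2^2$ in place of $\Theta_2^4$, the last quotient contributes $0$ at $q^3$ and the total is $1$, as it should be. Your derivation of the odd part \eqref{eq:mu2nplus1} is unaffected and correct. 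To repair the argument, derive the theta terms from the product of the two dissections rather than trusting the printed exponents, and state \eqref{eq:mu2n} with $\Theta_2^2$ in the last denominator.
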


\begin{corollary}
\label{cor:rho6diss2}
The following identities hold:
\begin{align}
\sum_{n \ge 0} P_{\rho_6}(2n)\,q^n
&=
\frac{\Theta_2^3\,\Theta_3^2}
{\Theta_1^3\,\Theta_6},
\label{eq:rho2n}
\\[2mm]
\sum_{n \ge 0} P_{\rho_6}(2n+1)\,q^n
&=
-\frac{U_{0,8}(q^3)}{q}
+\frac{1}{q}\,
\frac{\Theta_2\,\Theta_4^2\,\Theta_6^3}
{\Theta_1^2\,\Theta_3\,\Theta_8\,\Theta_{24}}.
\label{eq:rho2nplus1}
\end{align}
\end{corollary}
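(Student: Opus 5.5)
Corollary~\ref{cor:rho6diss2} follows directly from the 2-dissection \eqref{eq:rho6diss2} in Theorem~\ref{thm:ord6diss2}, which we assume. We sort the terms on the right of \eqref{eq:rho6diss2} by the parity of the exponents of $q$ in their expansions. Every quotient of the $\Theta_{2k}$ is a power series in $q^2$. The term $U_{0,8}(q^6)$ is also a series in $q^2$: from \eqref{U08}, $U_{0,8}(q)=\sum_{n\ge0} q^{n^2}(-q;q^2)_n/(-q^4;q^4)_n$ is a power series in $q$, so $U_{0,8}(q^6)$ is a power series in $q^6$.

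This gives two parts. The component with even exponents is $\Theta_4^3\Theta_6^2/(\Theta_2^3\Theta_{12})$. The component with odd exponents is
\[
\frac{1}{q}\Bigl(-U_{0,8}(q^6)+\frac{\Theta_4\Theta_8^2\Theta_{12}^3}{\Theta_2^2\Theta_6\Theta_{16}\Theta_{48}}\Bigr).
\]
The bracket is a power series in $q^2$ with constant term $-1+1=0$, so the odd component really does start at $q^1$. This matches $\rho_6(q)=-m(1,q^6,q)/q$ being a genuine power series.

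By uniqueness of the 2-dissection, $\sum_{n\ge0}P_{\rho_6}(2n)q^{2n}$ equals the even part. Replacing $q^2$ by $q$ turns each $\Theta_{2k}$ into $\Theta_k$, which gives \eqref{eq:rho2n}.

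For the odd coefficients, $\sum_{n\ge0}P_{\rho_6}(2n+1)q^{2n+1}$ equals the odd part. We divide by $q$ and then replace $q^2$ by $q$. The factor $1/q^2$ becomes $1/q$, $U_{0,8}(q^6)$ becomes $U_{0,8}(q^3)$, and the eta-quotient becomes $\Theta_2\Theta_4^2\Theta_6^3/(\Theta_1^2\Theta_3\Theta_8\Theta_{24})$. This gives \eqref{eq:rho2nplus1}.

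The only point needing any care is that the mock piece $U_{0,8}(q^6)$ sits entirely in one parity class. This holds because it is a series in $q^6$, so it contributes only to the odd component after the factor $1/q$. The substantive work is therefore in Theorem~\ref{thm:ord6diss2} itself.
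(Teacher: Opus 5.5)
Your proposal is correct and follows the paper's route: the corollary is read off from \eqref{eq:rho6diss2} by separating the terms that are series in $q^2$ from those of the form $q^{-1}$ times a series in $q^2$, then substituting $q^2\mapsto q$. Your parity bookkeeping, including the observation that $U_{0,8}(q^6)$ is a series in $q^6$, and both resulting eta-quotients check out.
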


\begin{corollary}
\label{cor:sigma6diss2}
The following identities hold:
\begin{align}
\sum_{n \ge 0} P_{\sigma_6}(2n)\,q^n
&=
-\frac{\phi_6(q)}{2}
+\frac{q\,\Theta_2^8\,\Theta_3\,\Theta_{12}^4}
{2\,\Theta_1^4\,\Theta_4^4\,\Theta_6^4}
+\frac{\Theta_4^4\,\Theta_6^8}
{2\,\Theta_2^4\,\Theta_3^3\,\Theta_{12}^4},
\label{eq:sigma2n}
\\[2mm]
\sum_{n \ge 0} P_{\sigma_6}(2n+1)\,q^n
&=
\frac{\Theta_2^2\,\Theta_6^2}
{\Theta_1^2\,\Theta_3}.
\label{eq:sigma2nplus1}
\end{align}
\end{corollary}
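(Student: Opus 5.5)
This corollary is read off directly from \eqn{sigma6diss2} in Theorem~\ref{thm:ord6diss2}. The plan is to sort each term on the right-hand side of \eqn{sigma6diss2} by the parity of the exponents of $q$ it contributes, and then replace $q^2$ by $q$ in each parity class.

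First, every $\Theta_k$ that occurs in \eqn{sigma6diss2} has $k\in\{2,4,6,8,12,24\}$. Each is therefore a power series in $q^2$, and so is any quotient of such products. The same holds for $\phi_6(q^2)$. Consequently:
\begin{itemize}
\item $-\tfrac12\phi_6(q^2)$, the term $\Theta_8^4\Theta_{12}^8/(2\Theta_4^4\Theta_6^3\Theta_{24}^4)$ and $q^2\Theta_4^8\Theta_6\Theta_{24}^4/(2\Theta_2^4\Theta_8^4\Theta_{12}^4)$ involve only even powers of $q$;
\item $q\,\Theta_4^2\Theta_{12}^2/(\Theta_2^2\Theta_6)$ involves only odd powers of $q$.
\end{itemize}

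By uniqueness of the $2$-dissection, the even part of $\sigma_6(q)$ is
\[
\sum_{n\ge0}P_{\sigma_6}(2n)q^{2n}=-\frac{\phi_6(q^2)}{2}+\frac{q^2\Theta_4^8\Theta_6\Theta_{24}^4}{2\Theta_2^4\Theta_8^4\Theta_{12}^4}+\frac{\Theta_8^4\Theta_{12}^8}{2\Theta_4^4\Theta_6^3\Theta_{24}^4}.
\]
Replacing $q^2$ by $q$ sends $\Theta_{k}$ to $\Theta_{k/2}$ and $q^2$ to $q$, which gives \eqn{sigma2n}.

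Similarly, the odd part is
\[
\sum_{n\ge0}P_{\sigma_6}(2n+1)q^{2n+1}=q\,\frac{\Theta_4^2\Theta_{12}^2}{\Theta_2^2\Theta_6}.
\]
Dividing by $q$ and then replacing $q^2$ by $q$ gives \eqn{sigma2nplus1}.

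Nothing here is a genuine obstacle, since all the work lies in Theorem~\ref{thm:ord6diss2}. The only point needing care is that both substitutions are legitimate: each series converges for $|q|<1$, so $q\mapsto q^{1/2}$ acts termwise on a series in $q^2$.
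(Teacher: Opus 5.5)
Your proposal is correct and follows the paper's route: the paper treats this corollary as an immediate consequence of \eqn{sigma6diss2}. You separate the three terms that are series in $q^2$ from the single odd term $q\,\Theta_4^2\Theta_{12}^2/(\Theta_2^2\Theta_6)$, then divide by $q$ where needed and replace $q^2$ by $q$; the resulting eta-quotients match \eqn{sigma2n} and \eqn{sigma2nplus1} exactly.
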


\begin{corollary}
\label{cor:lambda6diss2}
The following identities hold:
\begin{align}
\sum_{n \ge 0} P_{\lambda_6}(2n)\,q^n
&=
\frac{\Theta_2^3\,\Theta_3^2}
{\Theta_1^3\,\Theta_6},
\label{eq:lambda2n}
\\[2mm]
\sum_{n \ge 0} P_{\lambda_6}(2n+1)\,q^n
&=
\frac{\mu_2(q^3)}{q}
+\frac{\Theta_4^4\,\Theta_6^5}
{q\,\Theta_1\,\Theta_2^3\,\Theta_{12}^4}.
\label{eq:lambda2nplus1}
\end{align}
\end{corollary}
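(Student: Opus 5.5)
The plan is to read off Corollary~\ref{cor:lambda6diss2} directly from the $2$-dissection \eqref{eq:lambda6diss2} in Theorem~\ref{thm:ord6diss2}, which I take as given. No new identity is needed; the work is bookkeeping of parity and the substitution $q^2\mapsto q$.

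\textbf{Sorting the terms by parity.} Every $\Theta_{k}$ with $k$ even, and every $\overline{\Theta}_{a,m}$ with $a,m$ even, is a power series in $q^2$. Likewise $\mu_2(q^6)$ is a series in $q^2$. So in \eqref{eq:lambda6diss2} the term $\Theta_4^3\Theta_6^2/(\Theta_2^3\Theta_{12})$ contains only even powers of $q$. The other two terms, $\mu_2(q^6)/q$ and $\Theta_8^4\Theta_{12}^5/(q\,\Theta_2\Theta_4^3\Theta_{24}^4)$, each carry the single odd factor $q^{-1}$. This gives the decomposition into $A_0$ and $A_1$ in the sense of the $m$-dissection definition from the introduction, with $A_1$ written as $q\cdot(\text{series in }q^2)$ after absorbing $q^{-2}$.

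\textbf{Even part.} Equating even parts gives $\sum_n P_{\lambda_6}(2n)q^{2n}=\Theta_4^3\Theta_6^2/(\Theta_2^3\Theta_{12})$. Replacing $q^2$ by $q$ sends $\Theta_{2k}$ to $\Theta_k$, which yields \eqref{eq:lambda2n}. I would cross-check this against \eqref{eq:rho2n}: the two right-hand sides coincide, and indeed $\lambda_6$ and $\rho_6$ have the same even part.

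\textbf{Odd part.} Equating odd parts gives
\[
\sum_{n\ge0}P_{\lambda_6}(2n+1)q^{2n+1}=\frac{\mu_2(q^6)}{q}\pm\frac{\Theta_8^4\Theta_{12}^5}{q\,\Theta_2\Theta_4^3\Theta_{24}^4},
\]
where the sign is inherited from \eqref{eq:lambda6diss2}. I then divide by $q$ and replace $q^2$ by $q$. This turns $\mu_2(q^6)/q^2$ into $\mu_2(q^3)/q$, and the eta-quotient into $\Theta_4^4\Theta_6^5/(q\,\Theta_1\Theta_2^3\Theta_{12}^4)$, which is the form displayed in \eqref{eq:lambda2nplus1}.

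\textbf{Main obstacle: the sign of the eta-quotient.} This is the step I expect to be delicate, and I would settle it by comparing low-order coefficients. From the defining series, $\lambda_6(q)=1-q+2q^2-\cdots$, so the odd-part series on the left of \eqref{eq:lambda2nplus1} begins $-1+O(q)$. Since $\mu_2(q^3)/q=q^{-1}+O(q^2)$, the $q^{-1}$ terms on the right must cancel. That cancellation requires the eta-quotient to enter with sign opposite to $\mu_2(q^3)/q$, matching the minus sign in \eqref{eq:lambda6diss2}.

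So the derivation from \eqref{eq:lambda6diss2} gives the stated right-hand side with a minus sign in front of the eta-quotient. The printed plus sign would instead produce a $2q^{-1}$ term, so I do not see a way to obtain \eqref{eq:lambda2nplus1} exactly as printed. To confirm the sign beyond the leading term, I would compare the $q$-expansions of both sides to a modest order, say $O(q^{30})$, against the defining series for $\lambda_6$ and $\mu_2$.
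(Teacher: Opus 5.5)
Your argument is exactly the paper's: the corollary is just the parity split of \eqref{eq:lambda6diss2} followed by $q^2\mapsto q$. Your sign diagnosis is also correct. The odd part is $\mu_2(q^3)/q-\Theta_4^4\Theta_6^5/(q\,\Theta_1\Theta_2^3\Theta_{12}^4)$, so the plus sign printed in \eqref{eq:lambda2nplus1} is a misprint: with the minus sign the right side is $-1-5q+\cdots$, matching $P_{\lambda_6}(1)=-1$ and $P_{\lambda_6}(3)=-5$.

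One harmless slip: $\lambda_6(q)=1-q+3q^2-5q^3+\cdots$, not $1-q+2q^2-\cdots$. This agrees with \eqref{eq:lambda2n}, which begins $1+3q+\cdots$, and does not affect your argument.
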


\begin{corollary}
\label{cor:mu6diss2}
The following identities hold:
\begin{align}
\sum_{n \ge 0} P_{\mu_6}(2n)\,q^n
&=
\phi_6(q)
+\frac{\Theta_1\,\Theta_3\,\Theta_4^5\,\Theta_{12}^5}
{\Theta_2^2\,\Theta_6^2\,\Theta_8^2\,\Theta_{24}^2}
+4q\,\frac{\Theta_1\,\Theta_3\,\Theta_8^2\,\Theta_{24}^2}
{\Theta_4\,\Theta_{12}},
\label{eq:mu62n}
\\[2mm]
\sum_{n \ge 0} P_{\mu_6}(2n+1)\,q^n
&=
\frac{\Theta_2^2\,\Theta_6^2}
{\Theta_1^2\,\Theta_3}.
\label{eq:mu62nplus1}
\end{align}
\end{corollary}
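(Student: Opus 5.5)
The statement to prove is Corollary~\ref{cor:mu6diss2}. I will derive it from the $2$-dissection \eqref{eq:mu6diss2} of Theorem~\ref{thm:ord6diss2}, which I assume proved. Every term there except $q\,\Theta_4^2\Theta_{12}^2/(\Theta_2^2\Theta_6)$ is a power series in $q^2$: $\phi_6(q^2)$ is, and the two eta-quotients carry the prefactors $1$ and $q^2$ and involve only $\Theta_{2k}$. So I sort the terms by the parity of the exponent of $q$.

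\textbf{Odd part.} The odd part of $\mu_6(q)$ is $q\,\Theta_4^2\Theta_{12}^2/(\Theta_2^2\Theta_6)$. Dividing by $q$ and replacing $q^2$ by $q$ gives \eqref{eq:mu62nplus1}.

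\textbf{Even part.} Replacing $q^2$ by $q$ in the even part gives
\[
\sum_{n\ge0}P_{\mu_6}(2n)q^n=\phi_6(q)-\frac{\Theta_4^4\Theta_6^8}{2\Theta_2^4\Theta_3^3\Theta_{12}^4}-\frac q2\,\frac{\Theta_2^8\Theta_3\Theta_{12}^4}{\Theta_1^4\Theta_4^4\Theta_6^4}.
\]
So \eqref{eq:mu62n} reduces to the pure theta function identity
\[
-\frac{\Theta_4^4\Theta_6^8}{2\Theta_2^4\Theta_3^3\Theta_{12}^4}-\frac q2\,\frac{\Theta_2^8\Theta_3\Theta_{12}^4}{\Theta_1^4\Theta_4^4\Theta_6^4}
=\frac{\Theta_1\Theta_3\Theta_4^5\Theta_{12}^5}{\Theta_2^2\Theta_6^2\Theta_8^2\Theta_{24}^2}+4q\,\frac{\Theta_1\Theta_3\Theta_8^2\Theta_{24}^2}{\Theta_4\Theta_{12}}.
\]

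\textbf{Proving the identity.} The natural first attempt is to factor $\Theta_1\Theta_3$ on the right and use the classical $2$-dissections of $\Theta_1\Theta_3$ and $1/(\Theta_1\Theta_3)$. I would rather prove the identity directly with the $\theta$-Step Algorithm of Section~\sect{meth}, as the paper does for \eqref{eq:theta-inv1-sq}. The steps are:
\begin{enumerate}
\item Divide through by one of the four terms, so that each remaining term becomes a generalized eta-quotient.
\item Check that each quotient has weight $0$ and satisfies the Newman/Robins conditions for a modular function on $\Gamma_0(24)$ or $\Gamma_1(24)$.
\item Compute the orders at all cusps via Ligozat's formula, and let $B$ be the sum of the negative orders.
\item Verify the $q$-expansion agreement up to the Sturm-type bound $O(q^{|B|+1})$, with some margin beyond it.
\end{enumerate}

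\textbf{Main obstacle.} The hardest step is the cusp analysis on level $24$: choosing the normalizing term so that every quotient is holomorphic at $\infty$ and the pole bound $B$ stays small. I would also sanity-check the signs in the reduced identity, including the factor $4$, numerically before running the algorithm. If the numerical check fails, I would correct the sign convention in the $q^2$ term of \eqref{eq:mu6diss2}, since the corollary must be consistent with $P_{\mu_6}(0)=1/2$.
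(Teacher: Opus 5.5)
Your odd-part extraction is correct and is exactly how \eqref{eq:mu62nplus1} follows from \eqref{eq:mu6diss2}. Your displayed even part is also correct; this is all the paper's ``immediate consequence'' amounts to. The gap is the final step: the theta identity you plan to certify with the $\theta$-Step Algorithm is false, so $\theta$-Step~6 fails at the very first coefficient. Its left side has constant term $-\tfrac12$ and its right side has constant term $1$. Equivalently, the printed right-hand side of \eqref{eq:mu62n} starts with $\phi_6(0)+1=2$, whereas $P_{\mu_6}(0)=\tfrac12$, since the sum in \eqref{mu6} is $O(q)$. No choice of level or cusp bookkeeping can certify this. Your fallback of changing the $q^2$ term of \eqref{eq:mu6diss2} is also aimed at the wrong place, for two reasons. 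First, \eqref{eq:mu6diss2} is just \eqref{eq:mu6id} with the square of \eqref{eq:E13adiss2} substituted, and it already gives the correct constant term $1-\tfrac12$. Second, after $q^2\mapsto q$ its $q^2$ term only affects coefficients of $q^1$ and beyond, so it could never repair a constant-term mismatch. A simple rescaling does not rescue the printed line either: the printed bracket $\Theta_1\Theta_3(\cdots)$ begins $1+3q+\cdots$, while the series it would have to represent begins $1+q+8q^2+\cdots$.

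So \eqref{eq:mu62n} is misstated and cannot be proved; what the theorem actually yields is your own displayed formula. You can put it in the shape the printed line seems to intend without any modular-function computation. Write $\Theta_1^2\Theta_3^2/(\Theta_2^2\Theta_6)=\Theta_2^{-1}\sum_{m,n\in\mathbb{Z}}(-1)^{m+n}q^{m^2+3n^2}$. In \eqref{eq:mu6id}, keep only the terms with $m\equiv n\pmod 2$, then replace $q^2$ by $q$. This gives
\[
\sum_{n\ge0}P_{\mu_6}(2n)\,q^n=\phi_6(q)-\frac{\Theta_4^5\Theta_{12}^5}{2\Theta_1\Theta_2^2\Theta_6^2\Theta_8^2\Theta_{24}^2}-2q^2\,\frac{\Theta_8^2\Theta_{24}^2}{\Theta_1\Theta_4\Theta_{12}}.
\]
This is the printed quotients with $\Theta_1\Theta_3$ replaced by $-1/(2\Theta_1)$ and $4q$ replaced by $4q^2$. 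Its expansion $\tfrac12-\tfrac32q-2q^2+\cdots$ matches the coefficients of $\mu_6$. A correct write-up proves \eqref{eq:mu62nplus1} as you did and records this corrected even part, rather than attempting to verify \eqref{eq:mu62n}.
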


\begin{corollary}
\label{cor:V08diss2}
The following identities hold:
\begin{align}
\sum_{n \ge 0} P_{V_{0,8}}(2n)\,q^n
&=
\frac{\Theta_2^8}
{\Theta_1^4\,\Theta_4^3},
\label{eq:V02n}
\\[2mm]
\sum_{n \ge 0} P_{V_{0,8}}(2n+1)\,q^n
&=
\frac{4}{q}\,A_2(-q^4)
+2\,\frac{\Theta_2\,\Theta_4^5\,\Theta_{16}^2}
{\Theta_1^2\,\Theta_8^5}.
\end{align}
\end{corollary}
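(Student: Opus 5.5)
The plan is to read both identities off Theorem~\ref{thm:ord8diss2}, specifically \eqref{eq:V08diss2}, and then rewrite the odd part with Mortenson's first identity from the introduction. Equation \eqref{eq:V08diss2} states
\[
V_{0,8}(q)=2q\,B_2(q^2)+\frac{\Theta_4^8}{\Theta_2^4\,\Theta_8^3}.
\]
This is already a $2$-dissection, for two reasons.
\begin{enumerate}
\item Each $\Theta_{2m}$ is a power series in $q^2$, so the quotient $\Theta_4^8/(\Theta_2^4\Theta_8^3)$ contains only even powers of $q$.
\item $B_2(q^2)$ is a power series in $q^2$ with integer coefficients $P_{B_2}(n)$, so $2q\,B_2(q^2)$ contains only odd powers of $q$.
\end{enumerate}
Comparing coefficients of $q^{2n}$ and of $q^{2n+1}$ on both sides then gives
\[
\sum_{n\ge0}P_{V_{0,8}}(2n)\,q^{2n}=\frac{\Theta_4^8}{\Theta_2^4\Theta_8^3},\qquad
\sum_{n\ge0}P_{V_{0,8}}(2n+1)\,q^{2n+1}=2q\,B_2(q^2).
\]

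For the even part, replace $q^2$ by $q$. This sends $\Theta_{2m}$ to $\Theta_m$, and it yields \eqref{eq:V02n} directly.

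For the odd part, divide by $q$ and replace $q^2$ by $q$. This gives
\[
\sum_{n\ge0}P_{V_{0,8}}(2n+1)\,q^n=2B_2(q).
\]
Next I invoke the first of Mortenson's identities recalled in the introduction:
\[
qB_2(q)-2A_2(-q^4)=q\,\frac{\Theta_2\Theta_4^5\Theta_{16}^2}{\Theta_1^2\Theta_8^5}.
\]
Solving for $B_2(q)$ gives
\[
B_2(q)=\frac{2}{q}A_2(-q^4)+\frac{\Theta_2\Theta_4^5\Theta_{16}^2}{\Theta_1^2\Theta_8^5}.
\]
Doubling this produces exactly the second claimed identity.

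There is no real obstacle here; the corollary is bookkeeping once Theorem~\ref{thm:ord8diss2} is available. The only points needing care are the parity of each term, so that the coefficient comparison is legitimate, and the fact that the rational factor $\tfrac{2}{q}A_2(-q^4)$ has no pole. The latter holds because $A_2(q)=q+O(q^2)$ by \eqref{A2}, so $A_2(-q^4)/q$ is a genuine power series starting at $-q^3$. All the actual substance sits in the proof of \eqref{eq:V08diss2} itself, which the paper gives in Section~\ref{sec:2disspf}.
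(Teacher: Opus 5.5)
Your proposal is correct and matches the paper's route. You read the even and odd parts off \eqref{eq:V08diss2}, then rewrite the odd part $2B_2(q)$ using Mortenson's identity $qB_2(q)-2A_2(-q^4)=q\,\Theta_2\Theta_4^5\Theta_{16}^2/(\Theta_1^2\Theta_8^5)$, which is the same combination the paper uses in its concluding remark on $V_{0,8}(q)$.
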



\begin{corollary}
\label{cor:V18diss2}      
For all $n \ge 0$, we have
    \begin{align}
    \sum_{n \ge 0} P_{V_{1,8}}(2n) q^{n} & = \dfrac{q}{2}S_{1,8}(q^2) - \dfrac{S_{0,8}(q^2)}{2} + \dfrac{\Theta_4^9}{2\Theta_1\Theta_2^3\Theta_{8}^4},\label{eq:V_18 2n}\\
    \sum_{n \geq 0} P_{V_{1,8}}(2n+1)q^n & = \dfrac{\Theta_4^3}{\Theta_1\Theta_2}.\label{eq:V1 2n+1}
\end{align}
\end{corollary}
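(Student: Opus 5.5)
The plan is to obtain Corollary~\ref{cor:V18diss2} from the $2$-dissection \eqref{eq:V18diss2} of Theorem~\ref{thm:ord8diss2},
\[
V_{1,8}(q)=A_2(q^2)+q\,\frac{\Theta_8^3}{\Theta_2\Theta_4},
\]
and then to split the right-hand side by parity. The function $A_2(q^2)$ is a series in $q^2$, so it contributes only to the even part of $V_{1,8}$. The theta quotient needs more care: $\Theta_8^3/(\Theta_2\Theta_4)$ is a function of $q^2$, so $q\,\Theta_8^3/(\Theta_2\Theta_4)$ contributes only to the odd part. Reading off the odd coefficients and replacing $q^2\mapsto q$ gives
\[
\sum_{n\ge0}P_{V_{1,8}}(2n+1)q^n=\frac{\Theta_4^3}{\Theta_1\Theta_2}.
\]
This is exactly \eqref{eq:V1 2n+1}, and nothing further is needed there.

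For the even part, the same parity reading gives
\[
\sum_{n\ge0}P_{V_{1,8}}(2n)q^{n}=\sum_{n\ge0}P_{A_2}(n)q^{n}=A_2(q).
\]
We then substitute the $2$-dissection \eqref{eq:A2diss2} of $A_2(q)$ from Theorem~\ref{thm:ord2diss2}. This produces
\[
\frac{q}{2}S_{1,8}(q^2)-\frac12 S_{0,8}(q^2)+\frac{\Theta_4^9}{2\Theta_2^5\Theta_8^4}\bigl(\overline{\Theta}_{6,16}+2q\,\overline{\Theta}_{2,16}\bigr).
\]
Matching this with \eqref{eq:V_18 2n} requires the theta identity
\[
\overline{\Theta}_{6,16}+2q\,\overline{\Theta}_{2,16}=\frac{\Theta_2^2}{\Theta_1}.
\]
Equivalently, this identity supplies the $2$-dissection of $\Theta_2^2/\Theta_1=\sum_{n\ge0}q^{n(n+1)/2}$.

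We prove this identity directly by $q$-series. Write $\psi(q)=\Theta_2^2/\Theta_1=\tfrac12\sum_{n\in\mathbb Z}q^{n(n+1)/2}$, where the sum counts each value twice. Split the summation index $n$ by its residue mod $4$:
\begin{itemize}
\item $n=4k$ and $n=4k-1$ both give $q^{8k^2+2k}$;
\item $n=4k+1$ and $n=4k-2$ both give $q^{8k^2+6k+1}$.
\end{itemize}
Hence
\[
\psi(q)=\sum_k q^{8k^2+2k}+q\sum_k q^{8k^2+6k}.
\]
By the series form of Jacobi's triple product, $\overline{\Theta}_{a,16}=\Theta(-q^a;q^{16})=\sum_k q^{8k^2-8k+ak}$. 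Taking $a=10$ and $a=14$ gives
\[
\sum_k q^{8k^2+2k}=\overline{\Theta}_{10,16}=\overline{\Theta}_{6,16},\qquad \sum_k q^{8k^2+6k}=\overline{\Theta}_{14,16}=\overline{\Theta}_{2,16},
\]
using the symmetry $\Theta(x;q)=\Theta(q/x;q)$.

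Comparing with the required identity then exposes a factor-of-$2$ discrepancy in the $q$-term:
\begin{itemize}
\item the computation above gives $\overline{\Theta}_{6,16}+q\,\overline{\Theta}_{2,16}$;
\item the stated $\mu_2$ formula \eqref{eq:mu2diss2} also pairs $\overline{\Theta}_{6,16}$ with $q\,\overline{\Theta}_{2,16}$, with coefficient $1$.
\end{itemize}
So the coefficient $\tfrac q2\cdot\tfrac{\Theta_4^9\overline{\Theta}_{2,16}}{\Theta_2^5\Theta_8^4}$ in \eqref{eq:A2diss2} is presumably intended to be $\tfrac q2$, not $q$. With that reading the quotient collapses to $\Theta_4^9/(2\Theta_1\Theta_2^3\Theta_8^4)$, which is the term in \eqref{eq:V_18 2n}.

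The main obstacle is therefore this constant bookkeeping and the exact normalization of $\overline{\Theta}_{a,16}$, not any structural difficulty. As a safeguard I would check the final even-part identity numerically to $O(q^{20})$. Failing that, I would certify it by the $\theta$-Step Algorithm of Section~\ref{sec:meth}, after rewriting it via \eqref{eq:A2diss2} as an identity between generalized eta-quotients on $\Gamma_1(16)$.
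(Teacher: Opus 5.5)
Your proof is correct and follows the paper's route. You read the parity parts off \eqref{eq:V18diss2}, so the even part is $A_2(q)$. That equals the stated expression by the Section~\ref{subsec:eg} computation, which gives identity \eqref{eq:thetaid1}, i.e.\ \eqref{eq:A2diss2} before $1/\Theta_1$ is split via Lemma~\ref{lem:etadiss2}; your mod-$4$ splitting of $\sum q^{n(n+1)/2}$ is just a re-proof of \eqref{eq:eta1diss2}.

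The factor-of-$2$ ``discrepancy'' comes from an arithmetic slip in your factoring, not a typo in \eqref{eq:A2diss2}. Indeed, $\tfrac q2\cdot\Theta_4^9\overline{\Theta}_{2,16}/(\Theta_2^5\Theta_8^4)=\bigl(\Theta_4^9/(2\Theta_2^5\Theta_8^4)\bigr)\,q\,\overline{\Theta}_{2,16}$, so the bracket is $\overline{\Theta}_{6,16}+q\,\overline{\Theta}_{2,16}=\Theta_2^2/\Theta_1$ exactly as printed. No reinterpretation or numerical safeguard is needed.
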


\subsubsection*{The $3$-Dissections}

\begin{theorem}
\label{thm:ord2diss3}
We have the following representations
    \begin{align}
    A_2(q) & = \dfrac{A_2(q^9)}{q} - \dfrac{\phi_6(-q^3)}{2} + \dfrac{\Theta_6^{11}\Theta_{18}^2}{2\Theta_3^7\Theta_{12}^4\Theta_{36}} + q\dfrac{\Theta_6^4\Theta_9^2\Theta_{12}}{\Theta_3^5\Theta_{18}} + 2q^2\dfrac{\Theta_6^3\Theta_{12}\Theta_{18}^2}{\Theta_3^4\Theta_9}, 
\label{eq:A2diss3}\\
        B_2(q) & = \dfrac{\psi_6(q^{12})}{q^5} - \dfrac{\phi_6(q^{12})}{q} + \dfrac{\Theta_6^7 \Theta_9^2}{\Theta_3^6 \Theta_{12} \Theta_{18}}
+\dfrac{1}{q}\dfrac{\Theta_6^8 \Theta_{18}^2}{\Theta_3^4 \Theta_{24}^4 \Theta_{36}}
-\dfrac{1}{4q^5}\dfrac{\Theta_9 \Theta_{12}^5 \Theta_{18}^3}{\Theta_3^9 \Theta_6^5 \Theta_{24}^2 \Theta_{36}^2 \Theta_{72}} \nonumber \\
&+\dfrac{1}{4q}\dfrac{\Theta_{12}^4 \Theta_{36}}{\Theta_{24}^9 \Theta_{72}^2}
+\dfrac{3}{4q^2}\dfrac{\Theta_9^7 \Theta_{12}^2 \Theta_{36}^4}{\Theta_3^6 \Theta_{18}^9 \Theta_{24}^2 \Theta_{72}^2},
\label{eq:B2diss3}\\
   \mu_2(q) & = 2\phi_6(q^3) - \dfrac{\mu_2(q^9)}{q} + \dfrac{1}{q}\dfrac{\Theta_6^6\Theta_{18}^2}{\Theta_3\Theta_{12}^5\Theta_{36}} - \dfrac{\Theta_3^4\Theta_9\Theta_{36}}{\Theta_6\Theta_{12}^3\Theta_{18}}-q\dfrac{\Theta_6^7\Theta_{36}^2}{\Theta_3\Theta_{12}^6\Theta_{18}}. 
\label{eq:mu2diss3}
    \end{align}
\end{theorem}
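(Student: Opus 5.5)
Each of the three identities is proved in the same way: dissect the Appell--Lerch sum representation, then reduce what remains to a theta-function identity.

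\textbf{Step 1: dissect the Appell--Lerch sum.} Start from the representations \eqref{A2}--\eqref{mu2}, namely $A_2(q)=-m(q,q^4,q^2)$, $B_2(q)=-q^{-1}m(1,q^4,q^3)$ and $\mu_2(q)=4m(-q,q^4,-1)-\Theta_2^8/(\Theta_1^3\Theta_4^4)$. Write $m(x,q,z)$ through its defining series $\frac{1}{\Theta(z;q)}\sum_r \frac{(-1)^r q^{\binom r2}z^r}{1-q^{r-1}xz}$. Split the summation index $r$ by residue mod $3$, or use the Hickerson--Mortenson $n$-splitting formula $m(x,q,z)=\sum_{k=0}^{n-1}(\cdots)\,m(\cdots,q^{n^2},\cdots)+\text{theta quotient}$ with $n=3$. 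This expresses each function as a sum of three Appell--Lerch sums with base $q^{36}$, plus an explicit theta quotient.

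\textbf{Step 2: recognise the new Appell--Lerch sums.} Use the changing-$z$ formula and the standard symmetries ($m(x,q,z)=x^{-1}m(x^{-1},q,z^{-1})$, $m(qx,q,z)=1-xm(x,q,z)$) to move each resulting sum into a form from the catalog. The target forms are:
\begin{itemize}
\item $m(q^9,q^{36},q^{18})$, giving $-A_2(q^9)$;
\item $m(-q^9,q^{36},-1)$, giving $\mu_2(q^9)$;
\item $m(q^{3},q^{9},-1)$-type sums with $q\mapsto q^3$ or $-q^3$, giving $\phi_6$;
\item $m(1,q^{36},-q^{12})$-type sums, giving $\psi_6(q^{12})$ and $\phi_6(q^{12})$ for $B_2$.
\end{itemize}
Each use of the changing-$z$ formula contributes a theta quotient. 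After this step each identity reduces to the claim that
\[
\text{(mock function)}-\text{(mock pieces in the statement)}
\]
equals an explicit combination of generalized eta-quotients.

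\textbf{Step 3: reduce to a pure theta identity.} Collect all the theta quotients from Steps 1 and 2. The identity to verify is that this sum equals the theta terms on the right of \eqref{eq:A2diss3}, \eqref{eq:B2diss3} or \eqref{eq:mu2diss3}, respectively. Multiply through by one term so that every summand becomes a generalized eta-quotient of weight $0$ on some $\Gamma_1(N)$. The level is $N=36$ for $A_2$ and $\mu_2$, and $N=144$ or $72$ for $B_2$. Then apply the $\theta$-Step Algorithm of \sect{meth}: compute the orders at the cusps, obtain the Sturm-type bound $B$, and check that the $q$-expansions agree past $-B$ terms.

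\textbf{Main obstacle.} Step 2 for $B_2$ is the hardest part. There the base changes from $q^4$ to $q^{36}$, the original $z=q^3$ and the split pieces do not line up with the catalog arguments, and two distinct sixth-order pieces ($\psi_6(q^{12})q^{-5}$ and $\phi_6(q^{12})q^{-1}$) must appear. The many theta quotients produced make the final level large and the verification heavy. My first attempt would be to try several choices of $z$ in the splitting formula until the residual theta terms match the seven displayed ones. As a consistency check, I would extract the $q^{3n}$ component and confirm that it reproduces the $P_{B_2}(3n)$ identity quoted in the introduction.
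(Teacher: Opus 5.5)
Your overall plan is the paper's: split with Corollary~\ref{cor:cor3p6} at $n=3$, $z'=-1$; adjust with Theorem~\ref{thm:thm3p3} and Proposition~\ref{propo:prop3p1}; match catalog functions; then verify the leftover theta identity with the $\theta$-Step algorithm. However, Step~2 as you describe it cannot produce the $\phi_6(\pm q^3)$ terms in \eqref{eq:A2diss3} and \eqref{eq:mu2diss3}.

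After the $n=3$ split and inversion, $A_2$ becomes
\[
-\tfrac{1}{q}m(q^9,q^{36},-1)-m(q^{15},q^{36},-1)+\tfrac{1}{q^3}m(q^3,q^{36},-1)+(\text{theta quotients}).
\]
The analogous sums for $\mu_2$ have $x=-q^9,-q^{15},-q^3$. The first sum does become $A_2(q^9)/q$ via Theorem~\ref{thm:thm3p3}, and for $\mu_2$ it is directly $-\mu_2(q^9)/q$ up to a theta quotient. The other two sums are the problem. The changing-$z$ formula and the symmetries of Proposition~\ref{propo:prop3p1} never change the base $q^{36}$, and they preserve the exponent of $x$ modulo $36$ up to sign. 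So they cannot turn either sum into a base-$q^9$ sum such as $m(q^3,q^9,-1)$. Moreover, no catalog entry evaluated at $\pm q^k$ is an Appell--Lerch sum of base $q^{36}$ whose $x$ has exponent $\equiv \pm3,\pm15 \pmod{36}$.

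The missing idea is to run the splitting in reverse with the even case $n=2$. Apply Corollary~\ref{cor:cor3p6} with $n=2$, $z'=-1$ to $\phi_6(-q^3)=2m(-q^3,-q^9,-1)$. Its base becomes $(-q^9)^4=q^{36}$, and one gets
\[
\phi_6(-q^3)=2m(q^{15},q^{36},-1)-\tfrac{2}{q^3}m(q^3,q^{36},-1)+(\text{theta quotients}).
\]
So the two leftover sums in $A_2$ are exactly $-\tfrac12\phi_6(-q^3)$ up to theta quotients. Likewise, splitting $\phi_6(q^3)=2m(q^3,q^9,-1)$ with $n=2$ shows that $2\phi_6(q^3)$ equals
\[
4m(-q^{15},q^{36},-1)+\tfrac{4}{q^3}m(-q^3,q^{36},-1)
\]
up to theta quotients, and these are the leftover sums for $\mu_2$. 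The theta quotients from this auxiliary splitting must then go into the residual identity of your Step~3.

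Your assessment of where the difficulty lies is inverted. $B_2$ is the case where your Step~2 works as stated. With $z'=-1$ the pieces are $m(q^{12},q^{36},-1)$, $m(1,q^{36},-1)$ and $m(q^{-12},q^{36},-1)$. Inversion merges the first and third into $-\tfrac{2}{q}m(q^{12},q^{36},-1)=-\phi_6(q^{12})/q$. One application of Theorem~\ref{thm:thm3p3}, moving $z$ from $-1$ to $-q^{12}$, turns the middle one into $\psi_6(q^{12})/q^5$. No search over $z$ is needed.

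Finally, the level for $A_2$ and $\mu_2$ is $72$, not $36$. Factors such as $\overline{\Theta}_{0,36}=2\Theta_{72}^2/\Theta_{36}$ put $\Theta_{72}$ into the residual quotients, and the paper verifies all three identities on $\Gamma_1(72)$.
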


\begin{theorem}\label{thm:ord6diss3}
We have the following representations
    \begin{align}
        \psi_6(q) & =\phi_6(q^9) -\dfrac{\psi_6(q^9)}{q^3}-\dfrac{\Theta_3\Theta_9^4}{\Theta_6^2\Theta_{18}^2}+q\dfrac{\Theta_3^2\Theta_9\Theta_{18}}{\Theta_6^3}-q^2\dfrac{\Theta_3^3\Theta_{18}^4}{\Theta_6^4\Theta_9^2}, 
\label{eq:psi6diss3}\\
        \rho_6(q) & = \dfrac{\psi_6(q^{18})}{q^7}-\dfrac{\phi_6(q^{18})}{q}+\dfrac{\Theta_6^3\Theta_9^4}{\Theta_3^4\Theta_{18}^2}+2q\dfrac{\Theta_6^2\Theta_9\Theta_{18}}{\Theta_3^3}+\dfrac{1}{q}\dfrac{\Theta_6\Theta_9\Theta_{12}\Theta_{18}^4}{\Theta_3^3\Theta_{36}^3}, 
\label{eq:rho6diss3}\\
        \lambda_6(q) & = \dfrac{2}{q}\phi_6(q^{18})-2q^2\rho_6(-q^9)+\dfrac{\Theta_3^5\Theta_9\Theta_{18}}{\Theta_6^6}-q\dfrac{\Theta_3^6\Theta_{18}^4}{\Theta_6^7\Theta_9^2}-3q^2\dfrac{\Theta_3^3\Theta_{18}^5}{\Theta_6^6\Theta_9} - \dfrac{2}{q}\dfrac{\Theta_3^4\Theta_9^2\Theta_{12}^2\Theta_{54}^5}{\Theta_6^6\Theta_{18}^2\Theta_{27}^2\Theta_{108}^2}, 
\label{eq:lambda6diss3}\\
        \psi_{{-}_6}(q) & = \dfrac{\psi_6(q^9)}{2q^3}-\dfrac{\phi_6(q^9)}{2}+\dfrac{\Theta_6^7\Theta_9^7}{2\Theta_3^8\Theta_{18}^5}+q\dfrac{\Theta_6^6\Theta_9^4}{\Theta_3^7\Theta_{18}^2}+2q^2\dfrac{\Theta_6^5\Theta_9\Theta_{18}}{\Theta_3^6}.  
\label{eq:psiminus6diss3}
    \end{align}
\end{theorem}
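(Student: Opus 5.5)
The plan is to work entirely at the level of Appell--Lerch sums, following the Hickerson--Mortenson transformation method, and to push every non-mock piece into theta-quotient identities verified by the $\theta$-Step Algorithm of Section~\ref{sec:meth}. We treat each of the four identities \eqref{eq:psi6diss3}--\eqref{eq:psiminus6diss3} separately, but with the same template.

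\emph{Step 1: rewrite the left side.} For $\psi_6$ we use \eqref{psi6}, which gives $\psi_6(q)=m(1,q^3,-q)$. The other three are handled the same way: \eqref{rho6} gives $\rho_6(q)=-q^{-1}m(1,q^6,q)$, \eqref{lambda6} gives $\lambda_6$ as $2q^{-1}m(1,q^6,-q^2)$ plus a theta quotient, and \eqref{psiminus6} gives $\psi_{-_6}$ as $-\tfrac12 m(1,q^3,q)$ plus a theta quotient.

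\emph{Step 2: split the Appell--Lerch sum.} We apply the standard $n$-fold splitting formula for $m(x,q,z)$ (the identity expressing $m(x,q,z)$ as $\sum_{r=0}^{n-1} q^{-\binom{r+1}{2}}(-x)^r m\bigl(-q^{\binom{n}{2}-nr}(-x)^n, q^{n^2}, z'\bigr)$ plus a theta-function correction term), taking $n=3$. This is among the properties of Appell--Lerch sums recalled in Section~\ref{sec:proptm}.

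For $m(1,q^3,z)$ this produces three sums with nome $q^{27}$. We would then
\begin{itemize}
\item use the changing-$z$ formula $m(x,q,z_1)-m(x,q,z_0)=$ theta quotient to move each $z$-argument to a convenient value;
\item use the relations $m(x,q,z)=x^{-1}m(x^{-1},q,z^{-1})$ and $m(qx,q,z)=1-x\,m(x,q,z)$ to normalise the $x$-arguments.
\end{itemize}
The goal is to recognise the pieces as $m(q^9,q^{27},-1)$ and $m(1,q^{27},-q^9)$, that is, $\tfrac12\phi_6(q^9)$ and $\psi_6(q^9)$. The third piece should have $x$ a power of $q^9$ times a sign, so that it collapses by the $m(qx,q,z)$ rules to an explicit theta quotient.

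For $\rho_6$ and $\lambda_6$ the nome becomes $q^{54}$. There the pieces should match $\phi_6(q^{18})$, $\psi_6(q^{18})$, or $\rho_6(-q^9)=-(-q^9)^{-1}m(1,q^{54},-q^9)$. The target terms $\frac{2}{q}\phi_6(q^{18})$ and $-2q^2\rho_6(-q^9)$ confirm which arguments to aim for.

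\emph{Step 3: check the exponents.} Once the mock parts are matched, we verify that each matched mock term carries an exponent in the correct residue class mod $3$: $q^{-3}\psi_6(q^9)$, $q^{-7}\psi_6(q^{18})$, and so on. Everything left over is a finite combination of theta quotients. These come from the correction terms of the splitting and $z$-changing formulas, together with the theta quotients already present in \eqref{lambda6} and \eqref{psiminus6}.

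\emph{Step 4: reduce to a theta identity.} The remaining claim is that this leftover combination equals the explicit theta part of the stated dissection, for example
\[
-\frac{\Theta_3\Theta_9^4}{\Theta_6^2\Theta_{18}^2}+q\frac{\Theta_3^2\Theta_9\Theta_{18}}{\Theta_6^3}-q^2\frac{\Theta_3^3\Theta_{18}^4}{\Theta_6^4\Theta_9^2}
\]
in the $\psi_6$ case. This is an identity among generalized eta-quotients. We divide through by one term to obtain a sum of modular functions on some $\Gamma_1(N)$, where $N$ divides $108$ (or $216$ in the $\lambda_6$ case, because of $\Theta_{108}$). We then apply the $\theta$-Step Algorithm: bound the order at the cusps to get the constant $B$, and verify agreement of the $q$-expansions to the required number of terms.

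\emph{Main obstacle.} The hardest part is Step 2 for $\lambda_6$. There the splitting produces an extra Appell--Lerch sum that must be identified with $\rho_6(-q^9)$ rather than a second/sixth-order function at $q^{27}$ or $q^{18}$. The accompanying $z$-change also introduces the level-$108$ quotient $\Theta_{54}^5/(\Theta_{27}^2\Theta_{108}^2)$. We would first try choosing $z'$ in the splitting formula equal to the value dictated by the target, namely $-q^9$ with nome $q^{54}$, so that no $z$-change is needed for that term. If this fails, we would fall back on the general $f_{a,b,c}$ Hecke-series machinery of Hickerson--Mortenson, and accept a larger but still finite $q$-expansion check in Step 4.
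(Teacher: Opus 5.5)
This is essentially the paper's proof: in each case it applies Corollary~\ref{cor:cor3p6} with $n=3$ and $z'=-1$, simplifies with \eqref{eq:m_inversion}, and uses Theorem~\ref{thm:thm3p3} once to move the middle sum $m(1,\cdot,-1)$ to $z=-q^9$ (resp.\ $-q^{18}$ for $\rho_6$), so that it becomes $\psi_6(q^9)$, $\psi_6(q^{18})$ or $\rho_6(-q^9)$; the leftover theta identity is then proved by the $\theta$-Step algorithm on $\Gamma_1(54)$ or $\Gamma_1(108)$, so neither a special choice of $z'$ nor the Hecke-series fallback is needed. One correction: the $r=2$ piece does not collapse to a theta quotient, because for $\psi_6$ it is $q^{-9}m(q^{-9},q^{27},-1)=m(q^9,q^{27},-1)$ by \eqref{eq:m_inversion}; this doubles the $r=0$ piece and is exactly what produces $2m(q^9,q^{27},-1)=\phi_6(q^9)$ (and likewise the coefficient in $\frac{2}{q}\phi_6(q^{18})$ for $\lambda_6$).
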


\begin{theorem}\label{thm:ord8diss3}
We have the following representations
    \begin{align}
        U_{0,8}(q) & = \phi_6(q^3) - \dfrac{U_{0,8}(q^9)}{q}+ \dfrac{1}{q}\dfrac{\Theta_6^2\Theta_{12}\Theta_{36}^5}{\Theta_3\Theta_{18}^2\Theta_{24}^2\Theta_{72}^2} +q\dfrac{\Theta_6\Theta_{12}^3\Theta_{18}\Theta_{72}}{\Theta_3\Theta_{24}^3\Theta_{36}} +q^3\dfrac{\Theta_3^3\Theta_{12}\Theta_{72}^2}{\Theta_6^2\Theta_{24}^2\Theta_{36}},   
\label{eq:U08diss3}\\
        U_{1,8}(q) & = \dfrac{\phi_6(q^3)}{2} + \dfrac{U_{0,8}(q^9)}{2q} - \dfrac{1}{2q} \dfrac{\Theta_6^4\Theta_9\Theta_{24}^2\Theta_{36}^4}{\Theta_{12}^6\Theta_{18}^2\Theta_{72}^2} + \dfrac{\Theta_3^3\Theta_{24}^2\Theta_{36}^5}{2\Theta_{12}^5\Theta_{18}^2\Theta_{72}^2} + q\dfrac{\Theta_6^4\Theta_{24}^3\Theta_{36}^2}{\Theta_3\Theta_{12}^6\Theta_{72}},   
\label{eq:U18diss3}\\
        V_{0,8}(q) & = \dfrac{2}{q^9}\psi_6(q^{24}) 
        - \dfrac{2}{q}\phi_6(q^{24})
        +\dfrac{2}{q}\dfrac{\Theta_6\Theta_{24}^3\Theta_{36}^2}{\Theta_3^2\Theta_{48}^2\Theta_{72}} 
        + \dfrac{\Theta_6^5\Theta_{18}^2\Theta_{24}^4}{\Theta_3^3\Theta_9\Theta_{12}^4\Theta_{48}^2}
        + 2q\dfrac{\Theta_6^3\Theta_9\Theta_{12}\Theta_{36}}{\Theta_3^3\Theta_{18}\Theta_{24}} \nonumber \\
        & + q^3\,\dfrac{\Theta_6^8\Theta_9^2\Theta_{24}\Theta_{36}^2\Theta_{144}}{\Theta_3^4\Theta_{12}^4\Theta_{18}^3\Theta_{48}\Theta_{72}} 
        + 4q^9\,\dfrac{\Theta_6\Theta_{12}^2\Theta_{144}^2}{\Theta_3^2\Theta_{24}\Theta_{72}},    
\label{eq:V08diss3}\\
        V_{1,8}(q) & = -\dfrac{\phi_6(-q^6)}{2} - \dfrac{U_{0,8}(-q^{18})}{2q^2} + \dfrac{1}{2q^2}\dfrac{\Theta_6\Theta_{18}^3\Theta_{24}^5}{\Theta_3^2\Theta_{12}^2\Theta_{36}\Theta_{48}^2\Theta_{72}} + \dfrac{\Theta_9^2\Theta_{12}^7}{2\Theta_3^2\Theta_6^2\Theta_{18}\Theta_{24}^3} \nonumber \\
        & + q^2\dfrac{\Theta_6^4\Theta_9\Theta_{24}\Theta_{36}}{\Theta_3^3\Theta_{12}^2\Theta_{18}}.   
\label{eq:V18diss3}
    \end{align}
\end{theorem}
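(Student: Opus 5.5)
We treat each of the four identities of Theorem~\ref{thm:ord8diss3} in the same way, using the Appell--Lerch representations \eqref{U08}, \eqref{U18}, \eqref{V08}, \eqref{V18} together with \eqref{phi6}, \eqref{psi6}. The plan is to reduce every statement to a pure theta-function identity. That identity is then checked by the $\theta$-Step Algorithm of \sect{meth}, i.e.\ by the valence bound for generalized eta-quotients on a suitable $\Gamma_1(N)$.

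\textbf{Step 1: dissect the Appell--Lerch sum.} For a sum $m(x,q,z)$ we use the standard $n$-fold decomposition of Hickerson--Mortenson (the formula expressing $m(x,q,z)$ as $\sum_{r=0}^{n-1} q^{-\binom{r+1}{2}}(-x)^r m(-q^{\binom{n}{2}-nr}(-x)^n, q^{n^2}, z')$ plus a theta quotient), with $n=3$. This is recalled in Section~\ref{sec:proptm}.

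For $U_{0,8}(q)=2m(-q,q^4,-1)$ it produces three Appell--Lerch sums with base $q^{36}$. One of them is of the form $m(-q^9,q^{36},\cdot)$, which matches $U_{0,8}(q^9)$. The other two combine into a sum with base $q^{9}$ or $q^{12}$.

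\textbf{Step 2: identify the remaining sums.} Using the changes of $z$ (the identity $m(x,q,z_1)-m(x,q,z_0)=$ theta quotient), the shift and inversion symmetries $m(qx,q,z)$ and $m(x^{-1},q,z^{-1})$, and the relation $m(x,q,z)=m(x^2... )$-type duplication where needed, we rewrite the remaining sums as $m(q^3,q^9,-1)=\tfrac12\phi_6(q^3)$. Similarly:
\begin{itemize}
\item for $U_{1,8}$ the pieces become $\phi_6(q^3)$ and $U_{0,8}(q^9)$;
\item for $V_{1,8}=-m(q^2,q^8,q)$ they become $\phi_6(-q^6)$ and $U_{0,8}(-q^{18})$;
\item for $V_{0,8}$ (base $q^8$, giving base $q^{72}$) they become $\psi_6(q^{24})=m(1,q^{72},-q^{24})$ and $\phi_6(q^{24})=2m(q^{24},q^{72},-1)$.
\end{itemize}
The extra theta quotient $\Theta_2^3\Theta_4/(\Theta_1^2\Theta_8)$ in \eqref{V08} is carried along. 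Each move contributes an explicit theta quotient $\Theta(\cdot)\Theta(\cdot)/(\Theta(\cdot)\Theta(\cdot))$. The result is an identity of the form
\[
(\text{mock function}) = (\text{predicted mock terms}) + T(q),
\]
where $T$ is a finite sum of products of $\Theta_{a,m}$, $\overline{\Theta}_{a,m}$ and $\Theta_m$.

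\textbf{Step 3: prove the theta identity.} It remains to show that $T(q)$ equals the explicit sum of eta-quotients in the statement. We first 3-dissect the theta functions of base $q$ that appear in $T$, using known dissections such as those of $1/\Theta_1$ and $\Theta_1^3$ or triple-product splittings. This shows that the terms sort into the residue classes claimed, and it matches the pieces class by class.

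Each resulting identity is divided by one term, so that it becomes a sum of generalized eta-quotients of weight $0$ on $\Gamma_1(N)$. Here $N$ is $72$ for $U_{0,8}, U_{1,8}$ and $144$ for $V_{0,8}, V_{1,8}$. We compute orders at the cusps to obtain the bound $B$, and verify sufficiently many $q$-coefficients in Maple.

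\textbf{Main obstacle.} The main obstacle is Step 2, especially for $V_{0,8}$. The three dissected pieces must be matched exactly to $\psi_6(q^{24})$ and $\phi_6(q^{24})$ with the correct powers $q^{-9}$, $q^{-1}$. Choosing the right $z$ parameters is what makes the leftover theta quotients collapse to the seven stated terms, and the level $144$ makes the final valence check the most expensive. We would choose $z$ to kill poles (e.g.\ $z=-1$ or $z=x$ where the change-of-$z$ formula simplifies) and let the computer search the residual theta quotient against products of $\Theta_m$.
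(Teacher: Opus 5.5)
Your proposal follows the paper's own route. The paper applies the $n=3$ case of Corollary~\ref{cor:cor3p6} with $z'=-1$ and inversion, changes $z$ via Theorem~\ref{thm:thm3p3} for $V_{0,8}$ to reach $\psi_6(q^{24})$ and $\phi_6(q^{24})$, identifies the remaining sums with $\phi_6$ and $U_{0,8}$, and proves the residual theta identity by the valence-formula check on $\Gamma_1(72)$ or $\Gamma_1(144)$. The paper makes your ``duplication'' concrete by applying the $n=2$ case of Corollary~\ref{cor:cor3p6} to $\phi_6(q^3)$ (resp.\ $\phi_6(-q^6)$), which yields exactly the leftover sums $m(-q^{15},q^{36},-1)$ and $m(-q^{3},q^{36},-1)$ (resp.\ $m(q^{30},q^{72},-1)$ and $m(q^{6},q^{72},-1)$); it also verifies each residual identity for $\Psi$ as a single identity, without first 3-dissecting the base-$q$ theta factors as you propose.
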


The following corollaries follow immediately from the preceding theorems.

\begin{corollary}
\label{cor:A2diss3}
We have
    \begin{align}
        \sum_{n \geq 0} P_{A_2}(3n) q^n & =  - \dfrac{\phi_6(-q)}{2} + \dfrac{\Theta_2^{11}\Theta_{6}^2}{2\Theta_1^7\Theta_{4}^4\Theta_{12}},     
\label{eq:A2_3n0}\\
    \sum_{n \geq 0} P_{A_2}(3n + 1) q^n     &= \dfrac{\Theta_{2}^4 \Theta_{3}^2 \Theta_{4}}{\Theta_{1}^5 \Theta_{6}},     \label{eq:A2_3n1}\\
    \sum_{n \geq 0} P_{A_2}(3n + 2) q^n & = \dfrac{A_2(q^3)}{q} + 2\dfrac{\Theta_2^3\Theta_{4}\Theta_{6}^2}{\Theta_1^4\Theta_3}.     \label{eq:A2_3n2}
    \end{align}
\end{corollary}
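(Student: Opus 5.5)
The corollary follows from Theorem~\ref{thm:ord2diss3}, specifically \eqref{eq:A2diss3}, by extracting the terms of each residue class of exponents modulo $3$ and then replacing $q^3$ by $q$. The plan is to sort every term on the right-hand side of \eqref{eq:A2diss3} by the residues of the exponents in its $q$-expansion.

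First I check that each term lives in a single residue class.
\begin{itemize}
\item Each $\Theta_{3k}$ is a series in $q^3$, so every eta-quotient in $\Theta_3,\Theta_6,\Theta_9,\Theta_{12},\Theta_{18},\Theta_{36}$ contributes only exponents $\equiv 0 \pmod 3$.
\item The term $\phi_6(-q^3)$ is a series in $q^3$.
\item The term $A_2(q^9)/q$ has exponents $9k-1\equiv 2\pmod 3$. Since $A_2$ begins at $q^1$ by \eqref{A2}, $A_2(q^9)/q$ begins at $q^8$, so no negative powers occur.
\end{itemize}
The component of \eqref{eq:A2diss3} in each class is therefore as follows.
\begin{itemize}
\item Class $0$: the terms $-\tfrac12\phi_6(-q^3)$ and $\dfrac{\Theta_6^{11}\Theta_{18}^2}{2\Theta_3^7\Theta_{12}^4\Theta_{36}}$.
\item Class $1$: the single term $q\,\Theta_6^4\Theta_9^2\Theta_{12}/(\Theta_3^5\Theta_{18})$.
\item Class $2$: the terms $A_2(q^9)/q$ and $2q^2\,\Theta_6^3\Theta_{12}\Theta_{18}^2/(\Theta_3^4\Theta_9)$.
\end{itemize}

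Next I identify these with $q^r\sum_n P_{A_2}(3n+r)q^{3n}$ for $r=0,1,2$, using the definition of $m$-dissection in the introduction.
\begin{itemize}
\item For $r=1$: divide by $q$ and substitute $q^3\to q$. This sends $\Theta_{3k}\to\Theta_k$ and yields \eqref{eq:A2_3n1} directly.
\item For $r=0$: substitute $q^3\to q$ to obtain \eqref{eq:A2_3n0}.
\item For $r=2$: divide by $q^2$. This gives $A_2(q^9)/q^3$ together with the product term. Substituting $q^3\to q$ then gives $A_2(q^3)/q+2\Theta_2^3\Theta_4\Theta_6^2/(\Theta_1^4\Theta_3)$, which is \eqref{eq:A2_3n2}.
\end{itemize}

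There is no genuine obstacle here. The only points requiring care are confirming that $\phi_6(-q^3)$ and $A_2(q^9)/q$ fall into the claimed residue classes, and tracking the shift of $q^{-1}$ in the $r=2$ case. The real work lies in Theorem~\ref{thm:ord2diss3}, which we assume.
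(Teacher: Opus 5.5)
Your proposal is correct and matches the paper, which says only that this corollary follows immediately from \eqref{eq:A2diss3} in Theorem~\ref{thm:ord2diss3}. Your argument (sort the terms by the residue of their exponents mod $3$, divide by $q^r$, and replace $q^3$ by $q$) is exactly that extraction, and you correctly place $\phi_6(-q^3)$ in class $0$ and $A_2(q^9)/q$ in class $2$.
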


\begin{corollary}
\label{cor:B2diss3}
We have
    \begin{align}
        \sum_{n \geq 0} P_{B_2}(3n) q^n &= \dfrac{\Theta_{2}^7 \Theta_{3}^2}{\Theta_{1}^6 \Theta_{4} \Theta_{6}},     \label{eq:B3n}\\
    \sum_{n \geq 0} P_{B_2}(3n+1) q^n & = \dfrac{\psi_6(q^{4})}{q^2} + \frac{1}{4q^2} \frac{\Theta_{4}^4\Theta_{12}}{\Theta_{8}^2\Theta_{24}^2}-\frac{1}{4q^2}\frac{\Theta_3\Theta_{4}^9\Theta_{6}^3}{\Theta_1^3\Theta_2\Theta_{8}^5\Theta_{12}^2\Theta_{24}}  \nonumber \\
    &+ \frac{3}{4q} \frac{\Theta_2^7\Theta_3^2\Theta_{12}^4}{\Theta_1^6\Theta_{4}\Theta_{6}\Theta_{8}^2\Theta_{24}^2},     \label{eq:B3n1}\\
    \sum_{n \geq 0} P_{B_2}(3n+2) q^n & = - \frac{\phi_6(q^{4})}{q} +\frac{1}{q}\frac{\Theta_{4}^8\Theta_{6}^2}{\Theta_1^4\Theta_{8}^4\Theta_{12}}.     \label{eq:B3n2}
    \end{align}
\end{corollary}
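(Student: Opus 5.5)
The statement to prove is Corollary~\ref{cor:B2diss3}, and I would read it off from the $3$-dissection \eqn{B2diss3} of Theorem~\ref{thm:ord2diss3}, taken as already established. The plan is to sort every term on the right-hand side of \eqn{B2diss3} by the residue class modulo $3$ of the exponents of $q$ it contributes. Any $\Theta_k$ with $3\mid k$ is a power series in $q^3$, and so are $\psi_6(q^{12})$ and $\phi_6(q^{12})$. Each term is therefore a power series in $q^3$ multiplied by a monomial prefactor $q^{-5}$, $q^{-1}$, $q^{-2}$ or $q^0$, with the exponent $-5\equiv 1$, $-1\equiv 2$, $-2\equiv 1$, $0\equiv 0 \pmod 3$.

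This sorting gives the three components as follows.
\begin{itemize}
\item The $q^{3n}$ component is the single term $\Theta_6^7\Theta_9^2/(\Theta_3^6\Theta_{12}\Theta_{18})$.
\item The $q^{3n+2}$ component consists of $-\phi_6(q^{12})/q$, the term $\frac1q\Theta_6^8\Theta_{18}^2/(\Theta_3^4\Theta_{24}^4\Theta_{36})$, and the term $\frac{1}{4q}\Theta_{12}^4\Theta_{36}/(\Theta_{24}^9\Theta_{72}^2)$.
\item The $q^{3n+1}$ component consists of $\psi_6(q^{12})/q^5$, the $-\frac{1}{4q^5}$ term, and the $\frac{3}{4q^2}$ term.
\end{itemize}
Next, in each component I would factor out the leading monomial, $q^{3\cdot(-1)+2}$ or $q^{3\cdot(-2)+1}$ as appropriate, and replace $q^3$ by $q$. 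This sends $\Theta_{3k}\mapsto\Theta_k$, $\psi_6(q^{12})\mapsto\psi_6(q^4)$ and $\phi_6(q^{12})\mapsto \phi_6(q^4)$.

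Carrying this out, \eqn{B3n} comes out directly. For \eqn{B3n1} the $q^{-5}$ terms become $q^{-2}\psi_6(q^4)$ and $-\frac{1}{4q^2}\Theta_3\Theta_4^5\Theta_6^3/(\Theta_1^9\Theta_2^5\Theta_8^2\Theta_{12}^2\Theta_{24})$, and the $q^{-2}$ term becomes $\frac{3}{4q}\Theta_3^7\Theta_4^2\Theta_{12}^4/(\Theta_1^6\Theta_6^9\Theta_8^2\Theta_{24}^2)$. These do not literally match the printed quotients. So after the mechanical extraction I expect to need, for each residue class, a pure theta-function identity equating the sum of the extracted eta quotients with the eta quotients in the corollary. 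For \eqn{B3n2} the required identity is
\[
\frac{\Theta_2^8\Theta_6^2}{\Theta_1^4\Theta_8^4\Theta_{12}}+\frac{\Theta_4^4\Theta_{12}}{4\Theta_8^9\Theta_{24}^2}\overset{?}{=}\frac{\Theta_4^8\Theta_6^2}{\Theta_1^4\Theta_8^4\Theta_{12}},
\]
or whatever corrected form the exact exponents dictate. The $3n+1$ class needs an analogous identity.

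The main obstacle is proving these residual identities. Each is an identity among generalized eta quotients, and I would prove it with the $\theta$-Step Algorithm of \sect{meth}, exactly as the paper does for \eqn{theta-inv1-sq}.
\begin{enumerate}
\item Divide through by one term so that the identity becomes a statement about modular functions on some $\Gamma_0(N)$ or $\Gamma_1(N)$ (here $N=24$ or $72$).
\item Compute the orders at the cusps to obtain the Sturm-type bound $B$.
\item Verify the $q$-expansions to that order.
\end{enumerate}
If the extraction from \eqn{B2diss3} matches the corollary with no residual identity needed, the corollary is immediate, as the paper claims.
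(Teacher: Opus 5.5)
Your extraction step is exactly the paper's argument: the paper says only that the corollary follows immediately from \eqref{eq:B2diss3}. Your treatment of the $3n$ class is correct. The gap is the fallback for the classes $3n+1$ and $3n+2$: the residual identities you would give to the $\theta$-Step Algorithm are false, so the $q$-expansion check would fail.
\begin{itemize}
\item In your displayed identity the constant terms are $1+\tfrac14$ on the left and $1$ on the right. Also, $\Theta_4^4\Theta_{12}/(\Theta_8^9\Theta_{24}^2)$ has exponent sum $-6$ (weight $-3$), while the other two quotients have weight $\tfrac12$.
\item In the $3n+1$ class, after multiplying by $q^2$, the constant terms are $-\tfrac14$ versus $\tfrac14-\tfrac14=0$.
\end{itemize}
The reason is that \eqref{eq:B2diss3} as printed is not a true identity. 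Its right side has $q^{-5}$-coefficient $-\tfrac14$, coming only from the $-\tfrac{1}{4q^5}$ term, and $q^{-1}$-coefficient $-1+1+\tfrac14$. But $B_2(q)=1+2q+4q^2+\cdots$ has neither. So the mismatch you noticed comes from misprints in the theorem, not from a hidden theta identity. No true identity within one residue class can turn the printed theorem into the corollary, and none can move a term between classes. Two examples:
\begin{itemize}
\item The first summand of $\Psi(q)$ in $m$-Step~5 equals $\Theta_{12}^4\Theta_{36}/(4q^5\Theta_{24}^2\Theta_{72}^2)$ by the product formulas. So the printed term $\frac{1}{4q}\Theta_{12}^4\Theta_{36}/(\Theta_{24}^9\Theta_{72}^2)$ actually belongs to the class $3n+1$, which is where the corollary puts it.
\item The factor $\Theta_6^8$ in the $\frac1q$ term should be $\Theta_{12}^8$.
\end{itemize}

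What has to be proved instead is the pull-back of the corollary itself:
\begin{align*}
B_2(q)={}&\frac{\psi_6(q^{12})}{q^5}-\frac{\phi_6(q^{12})}{q}
+\frac{\Theta_6^7\Theta_9^2}{\Theta_3^6\Theta_{12}\Theta_{18}}
+\frac{\Theta_{12}^8\Theta_{18}^2}{q\,\Theta_3^4\Theta_{24}^4\Theta_{36}}
+\frac{\Theta_{12}^4\Theta_{36}}{4q^5\,\Theta_{24}^2\Theta_{72}^2}\\
&-\frac{\Theta_9\Theta_{12}^9\Theta_{18}^3}{4q^5\,\Theta_3^3\Theta_6\Theta_{24}^5\Theta_{36}^2\Theta_{72}}
+\frac{3\,\Theta_6^7\Theta_9^2\Theta_{36}^4}{4q^2\,\Theta_3^6\Theta_{12}\Theta_{18}\Theta_{24}^2\Theta_{72}^2}.
\end{align*}
Equivalently, one needs the corresponding corrected version of \eqref{eq:conj:Psiq} for the theta quotient $\Psi(q)$ from $m$-Step~5. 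That identity for $\Psi(q)$ is what must be verified with the $\theta$-Step Algorithm. Once it is established, \eqref{eq:B3n}--\eqref{eq:B3n2} follow by literal extraction, with no residual identity needed. As a check, this version gives $P_{B_2}(3n)=1,6,20,54$, $P_{B_2}(3n+1)=2,9,28$ and $P_{B_2}(3n+2)=4,14,40$. These match $B_2(q)=1+2q+4q^2+6q^3+9q^4+14q^5+20q^6+28q^7+40q^8+54q^9+\cdots$.
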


\begin{corollary}
\label{cor:mu2diss3}           
We have
    \begin{align}
    \sum_{n \geq 0} P_{\mu_2}(3n) q^n & = 2\phi_6(q) - \dfrac{\Theta_2^7\Theta_{12}^2}{\Theta_1\Theta_{4}^6\Theta_{6}},     \label{eq:mu3n}\\
    \sum_{n \geq 0} P_{\mu_2}(3n + 1) q^n     &= -\,\dfrac{\Theta_{2}^7 \Theta_{12}^2}{\Theta_{1} \Theta_{4}^6 \Theta_{6}},     \label{eq:mu2_3n1} \\
    \sum_{n \geq 0} P_{\mu_2}(3n + 2) q^n & = - \dfrac{\mu_2(q^3)}{q} + \dfrac{1}{q}\dfrac{\Theta_2^6\Theta_{6}^2}{\Theta_1\Theta_{4}^5\Theta_{12}}. \label{eq:mu2_3n2}
\end{align}
\end{corollary}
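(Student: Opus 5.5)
The plan is to read off Corollary~\ref{cor:mu2diss3} from the $3$-dissection \eqref{eq:mu2diss3} in Theorem~\ref{thm:ord2diss3}, which we take as established. No new analysis is needed; the only work is checking that every term on the right of \eqref{eq:mu2diss3} sits in a single residue class of exponents modulo $3$.

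First, $\phi_6(q^3)$, $\Theta_{3k}$ and $\mu_2(q^9)$ are power series in $q^3$. So the classes are read off from the prefactors $1$, $q$ and $q^{-1}$:
\begin{itemize}
\item the $q^{3n}$ part is $2\phi_6(q^3)-\Theta_3^4\Theta_9\Theta_{36}/(\Theta_6\Theta_{12}^3\Theta_{18})$;
\item the $q^{3n+1}$ part is $-q\,\Theta_6^7\Theta_{36}^2/(\Theta_3\Theta_{12}^6\Theta_{18})$;
\item the $q^{3n+2}$ part is $q^{-1}$ times $\bigl(-\mu_2(q^9)+\Theta_6^6\Theta_{18}^2/(\Theta_3\Theta_{12}^5\Theta_{36})\bigr)$, since $q^{-1}=q^{2}\cdot q^{-3}$.
\end{itemize}
For each class $r$, I divide the corresponding part by $q^r$ and replace $q^3$ by $q$.

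For $r=1$ this gives $-\Theta_2^7\Theta_{12}^2/(\Theta_1\Theta_4^6\Theta_6)$, which is \eqref{eq:mu2_3n1}. For $r=2$, dividing by $q^2$ turns $q^{-1}$ into $q^{-3}$, which becomes $q^{-1}$ after the substitution; this yields \eqref{eq:mu2_3n2} exactly.

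For $r=0$ the substitution gives
\[
2\phi_6(q)-\frac{\Theta_1^4\Theta_3\Theta_{12}}{\Theta_2\Theta_4^3\Theta_6},
\]
whereas \eqref{eq:mu3n} displays $2\phi_6(q)-\Theta_2^7\Theta_{12}^2/(\Theta_1\Theta_4^6\Theta_6)$. The remaining step, and the only nonformal one, is to prove the eta-quotient identity
\[
\frac{\Theta_1^4\Theta_3\Theta_{12}}{\Theta_2\Theta_4^3\Theta_6}=\frac{\Theta_2^7\Theta_{12}^2}{\Theta_1\Theta_4^6\Theta_6}.
\]
If it fails, then \eqref{eq:mu3n} itself would need correcting rather than my plan.

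To prove it, I rewrite it as the statement that the quotient
\[
\Theta_1^5\Theta_3\Theta_4^3\Theta_{12}^{-1}\Theta_2^{-8}=1
\]
is identically $1$. The weight of this quotient is $(5+1+3-1-8)/2=0$. I then apply the $\theta$-Step Algorithm of \sect{meth} on $\Gamma_0(12)$: check the holomorphy and order conditions at the cusps, obtain the Sturm-type bound $B$, and verify agreement of $q$-expansions up to that bound in Maple.

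The main risk is that this identity is false; a quick expansion to a few terms settles this before anything else is attempted. If it is false, the correct $r=0$ statement is the one with $\Theta_1^4\Theta_3\Theta_{12}/(\Theta_2\Theta_4^3\Theta_6)$ obtained directly from \eqref{eq:mu2diss3}.
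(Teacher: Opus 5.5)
Your route is the paper's own. The paper just says the corollary follows immediately from Theorem~\ref{thm:ord2diss3}, i.e.\ by sorting \eqref{eq:mu2diss3} into residue classes modulo $3$, and your extraction of \eqref{eq:mu2_3n1} and \eqref{eq:mu2_3n2} is exactly right.

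The step that fails is the $r=0$ bridge. The identity you propose to certify,
\[
\frac{\Theta_1^4\Theta_3\Theta_{12}}{\Theta_2\Theta_4^3\Theta_6}=\frac{\Theta_2^7\Theta_{12}^2}{\Theta_1\Theta_4^6\Theta_6},
\]
is false. The left side is $1-4q+3q^2+3q^3+\cdots$ and the right side is $1+q-5q^2-4q^3+\cdots$. Equivalently, $\Theta_1^5\Theta_3\Theta_4^3=1-5q+\cdots$ while $\Theta_2^8\Theta_{12}=1-8q^2+\cdots$. So no $\theta$-Step computation can succeed. In fact your quotient already fails $\theta$-Step 2: $5\cdot1+1\cdot3+3\cdot4-1\cdot12-8\cdot2=-8\neq0$, so it equals $q^{1/3}$ times a weight-$0$ eta-quotient and cannot be the constant $1$.

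Consequently \eqref{eq:mu3n}, as printed, is itself false and admits no proof.

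\begin{itemize}
\item From the series definition, $\mu_2(q)=1-q+q^2+2q^3-q^4-4q^5+q^6+5q^7-2q^8-5q^9+\cdots$. This agrees with $2U_{0,8}(q)-\Theta_2^8/(\Theta_1^3\Theta_4^4)$, and it gives $\sum_{n\ge0}P_{\mu_2}(3n)q^n=1+2q+q^2-5q^3+\cdots$.
\item With $2\phi_6(q)=2-2q+4q^2-2q^3+\cdots$, the printed right-hand side is $1-3q+9q^2+2q^3+\cdots$.
\item Your version, $2\phi_6(q)-\Theta_1^4\Theta_3\Theta_{12}/(\Theta_2\Theta_4^3\Theta_6)$, gives $1+2q+q^2-5q^3+\cdots$, which matches.
\end{itemize}

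The paper's ``follows immediately'' is valid only for this corrected form, which is precisely what your extraction yields. The eta-quotient printed in \eqref{eq:mu3n} is verbatim the one from \eqref{eq:mu2_3n1}, evidently copied in error.

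So your fallback is the right answer, but commit to it. Do the few-term expansion you mention, state the corrected $r=0$ identity, and note that it follows directly from \eqref{eq:mu2diss3} with no further verification. Do not plan a Sturm-bound proof of an identity that fails at $q^1$.
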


\begin{corollary}
\label{cor:psi6diss3}
We have
    \begin{align}
        \sum_{n \geq 0} P_{\psi_6}(3n) q^n & =\phi_6(q^3) -\dfrac{\psi_6(q^3)}{q}-\dfrac{\Theta_1\Theta_3^4}{\Theta_2^2\Theta_{6}^2},   \label{eq:psi 3n}\\
    \sum_{n \geq 0} P_{\psi_6}(3n+1) q^n &=   \dfrac{\Theta_1^2\Theta_3\Theta_6}{\Theta_2^3},   \label{eq:psi 3n+1}\\
    \sum_{n \geq 0} P_{\psi_6}(3n+2) q^n &=   -\dfrac{\Theta_1^3\Theta_6^4}{\Theta_2^4\Theta_3^2}.   \label{eq:psi 3n+2}
    \end{align}
\end{corollary}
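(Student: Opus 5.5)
The statement in question is Corollary~\ref{cor:psi6diss3}, and I plan to read it off from the $3$-dissection \eqref{eq:psi6diss3} in Theorem~\ref{thm:ord6diss3}, which I take as given.

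The first step is to sort the terms on the right-hand side of \eqref{eq:psi6diss3} by the residue class modulo $3$ of their exponents. Each $\Theta_k$ appearing there has $3\mid k$, namely $k\in\{3,6,9,18\}$. Hence every eta-quotient
\[
\frac{\Theta_3\Theta_9^4}{\Theta_6^2\Theta_{18}^2},\qquad
\frac{\Theta_3^2\Theta_9\Theta_{18}}{\Theta_6^3},\qquad
\frac{\Theta_3^3\Theta_{18}^4}{\Theta_6^4\Theta_9^2}
\]
is a power series in $q^3$. The same holds for $\phi_6(q^9)$ and for $q^{-3}\psi_6(q^9)$. Consequently:
\begin{itemize}
\item the terms $\phi_6(q^9)$, $-q^{-3}\psi_6(q^9)$ and the first quotient make up the component $A_0(q)$;
\item $q\,\Theta_3^2\Theta_9\Theta_{18}/\Theta_6^3$ is exactly $A_1(q)$;
\item $-q^2\,\Theta_3^3\Theta_{18}^4/\Theta_6^4\Theta_9^2$ is exactly $A_2(q)$.
\end{itemize}
Because an $m$-dissection is unique (it just partitions the coefficients $P_{\psi_6}(n)$ by $n \bmod 3$), these three groupings are the components $\psi_6$ must have.

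The second step is to extract the coefficient series. For $r=1,2$ I divide $A_r(q)$ by $q^r$ and replace $q^3$ by $q$, which sends $\Theta_{3k}$ to $\Theta_k$. This produces \eqref{eq:psi 3n+1} and \eqref{eq:psi 3n+2}. For $r=0$ the same substitution turns $\phi_6(q^9)$ into $\phi_6(q^3)$, $q^{-3}\psi_6(q^9)$ into $q^{-1}\psi_6(q^3)$, and the quotient into $\Theta_1\Theta_3^4/(\Theta_2^2\Theta_6^2)$, which gives \eqref{eq:psi 3n}.

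No step here is a real obstacle, because all of the analytic difficulty is in Theorem~\ref{thm:ord6diss3}. The one point to check is that the $r=0$ class is internally consistent, since the term $-\psi_6(q^3)/q$ has a negative power of $q$. This is fine: $\psi_6(q)=q-\cdots$, so $q^{-3}\psi_6(q^9)=1+O(q^9)$ has no pole. Matching the leading coefficients $P_{\psi_6}(0)=0$, $P_{\psi_6}(1)=1$ and $P_{\psi_6}(2)=-1$ against the series definition \eqref{psi6} gives a quick sanity check on signs.
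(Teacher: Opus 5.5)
Your proposal is correct and follows the paper's argument: the corollary is read off from \eqref{eq:psi6diss3} by sorting terms by exponent modulo $3$ and then replacing $q^3$ by $q$. One small slip in your side remark: since $\psi_6(q)=q-q^2+\cdots$, you have $q^{-3}\psi_6(q^9)=q^6+O(q^{15})$, not $1+O(q^9)$ (with your expansion the constant term of $A_0$ would be $-1$, contradicting $P_{\psi_6}(0)=0$), although your conclusion that this term has no pole still holds.
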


\begin{corollary}
\label{cor:rho6diss3}
We have
    \begin{align}
        \sum_{n \geq 0} P_{\rho_6}(3n) q^n &= \dfrac{\Theta_{2}^3 \Theta_{3}^4}{\Theta_{1}^4 \Theta_{6}^2},     \label{eq:rho 3n}\\
    \sum_{n \geq 0} P_{\rho_6}(3n+1) q^n &= 2\dfrac{\Theta_{2}^2 \Theta_{3} \Theta_{6}}{\Theta_{1}^3 },    \label{eq:rho 3n1}\\
    \sum_{n \geq 0} P_{\rho_6}(3n+2) q^n & = \dfrac{\psi_6(q^{6})}{q^3}-\dfrac{\phi_6(q^{6})}{q}+\dfrac{1}{q}\dfrac{\Theta_2\Theta_3\Theta_{4}\Theta_{6}^4}{\Theta_1^3\Theta_{12}^3}.   \label{eq:rho 3n2}
    \end{align}
\end{corollary}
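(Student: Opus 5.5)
The plan is to obtain Corollary~\ref{cor:rho6diss3} directly from the 3-dissection \eqref{eq:rho6diss3} in Theorem~\ref{thm:ord6diss3}. The key point is that every term on the right-hand side of \eqref{eq:rho6diss3} is a power of $q$ times a series in $q^3$. For the theta quotients this holds because every subscript occurring there ($3,6,9,12,18,36$) is divisible by $3$. For the mock terms it holds because $\psi_6(q^{18})$ and $\phi_6(q^{18})$ are series in $q^{18}$. So the first step is to sort the terms by the residue class of their leading power of $q$ modulo $3$.

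Carrying this out:
\begin{itemize}
\item The term $\Theta_6^3\Theta_9^4/(\Theta_3^4\Theta_{18}^2)$ carries exponents $\equiv 0 \pmod 3$.
\item The term $2q\,\Theta_6^2\Theta_9\Theta_{18}/\Theta_3^3$ carries exponents $\equiv 1$.
\item The remaining three terms, $q^{-7}\psi_6(q^{18})$, $-q^{-1}\phi_6(q^{18})$ and $q^{-1}\Theta_6\Theta_9\Theta_{12}\Theta_{18}^4/(\Theta_3^3\Theta_{36}^3)$, carry exponents $\equiv 2$, since $-7\equiv -1\equiv 2 \pmod 3$.
\end{itemize}
By uniqueness of the $3$-dissection, these three groups are $\sum P_{\rho_6}(3n)q^{3n}$, $q\sum P_{\rho_6}(3n+1)q^{3n}$ and $q^2\sum P_{\rho_6}(3n+2)q^{3n}$, respectively.

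Next, for $r=0,1,2$, I divide the $r$-th group by $q^r$ and replace $q^3$ by $q$, so that $\Theta_{3k}\mapsto\Theta_k$.
\begin{itemize}
\item For $r=0$ this gives $\Theta_2^3\Theta_3^4/(\Theta_1^4\Theta_6^2)$, which is \eqref{eq:rho 3n}.
\item For $r=1$ it gives $2\Theta_2^2\Theta_3\Theta_6/\Theta_1^3$, which is \eqref{eq:rho 3n1}.
\item For $r=2$, dividing by $q^2$ turns $q^{-7}$ into $q^{-9}$ and $q^{-1}$ into $q^{-3}$, which become $q^{-3}$ and $q^{-1}$ after $q^3\mapsto q$. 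This yields $\psi_6(q^6)/q^3-\phi_6(q^6)/q+q^{-1}\Theta_2\Theta_3\Theta_4\Theta_6^4/(\Theta_1^3\Theta_{12}^3)$, which is \eqref{eq:rho 3n2}.
\end{itemize}

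There is no real obstacle once \eqref{eq:rho6diss3} is granted. The only point needing care is the negative powers $q^{-7}$ and $q^{-1}$. Their contributions do not vanish individually: $\psi_6$ begins at $q^1$ and $\phi_6$ at $q^0$, so each of these terms contains negative powers of $q$. However, the left side is a power series in $q$ with no negative powers, so these negative powers must cancel within the $r=2$ class. Since the sorting is done term by term by exponent class, this cancellation does not affect the argument. I would finish with a quick check of the first few coefficients of each identity against the series definition \eqref{rho6}.
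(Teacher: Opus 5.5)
Your proposal is correct and follows the paper's route: the corollary is obtained from \eqref{eq:rho6diss3} by grouping terms by exponent residue modulo $3$ and then replacing $q^3$ by $q$. One harmless slip in your side remark: $q^{-7}\psi_6(q^{18})$ contains no negative powers, since it begins at $q^{11}$; the only negative powers are the leading terms $-q^{-1}$ of $-q^{-1}\phi_6(q^{18})$ and $+q^{-1}$ of the last theta quotient, and these cancel.
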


\begin{corollary}
\label{cor:lambda6diss3}
We have
    \begin{align}
        \sum_{n \geq 0} P_{\lambda_6}(3n) q^n &=   \dfrac{\Theta_1^5\Theta_3\Theta_6}{\Theta_2^6},   \label{eq:lambda 3n}\\
    \sum_{n \geq 0} P_{\lambda_6}(3n+1) q^n &=   -\dfrac{\Theta_1^6\Theta_6^4}{\Theta_2^7\Theta_3^2},   \label{eq:lambda 3n+1}\\
    \sum_{n \geq 0} P_{\lambda_6}(3n+2) q^n & = \dfrac{2}{q}\phi_6(q^{6})-2\rho_6(-q^3)-3\dfrac{\Theta_1^3\Theta_{6}^5}{\Theta_2^6\Theta_3} - \dfrac{2}{q}\dfrac{\Theta_1^4\Theta_3^2\Theta_{4}^2\Theta_{18}^5}{\Theta_2^6\Theta_{6}^2\Theta_{9}^2\Theta_{36}^2}.   \label{eq:lambda 3n+2}
    \end{align}
\end{corollary}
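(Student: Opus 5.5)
The plan is to read the three identities off the $3$-dissection \eqn{lambda6diss3} of Theorem~\thm{ord6diss3}. Every term on the right-hand side of \eqn{lambda6diss3} has the form $q^{e}F(q^3)$, where $F$ is a power series (or a Laurent series in $q^3$) and $e$ is an explicit integer. This holds because each of $\phi_6(q^{18})$, $\rho_6(-q^9)$ and every $\Theta_k$ appearing there has $3\mid k$, so each is a series in $q^3$. Consequently the exponents occurring in $q^{e}F(q^3)$ all lie in the single residue class $e \bmod 3$.

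First I sort the six terms of \eqn{lambda6diss3} by $e \bmod 3$:
\begin{itemize}
\item $e\equiv 0$: only $\Theta_3^5\Theta_9\Theta_{18}/\Theta_6^6$.
\item $e\equiv 1$: only $-q\,\Theta_3^6\Theta_{18}^4/(\Theta_6^7\Theta_9^2)$.
\item $e\equiv 2$: the remaining four terms, namely
\begin{itemize}
\item $\frac{2}{q}\phi_6(q^{18})$, since $-1\equiv 2 \pmod 3$;
\item $-2q^2\rho_6(-q^9)$;
\item $-3q^2\Theta_3^3\Theta_{18}^5/(\Theta_6^6\Theta_9)$;
\item the term carrying the prefactor $-\frac{2}{q}$.
\end{itemize}
\end{itemize}
By uniqueness of the $m$-dissection, the sum of the terms with $e\equiv r$ equals $\sum_{n} P_{\lambda_6}(3n+r)q^{3n+r}$ for $r=0,1,2$.

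Next, for $r=0,1,2$ I divide the corresponding group by $q^r$ and replace $q^3$ by $q$. Under this substitution $\Theta_{3k}\mapsto\Theta_k$, $\phi_6(q^{18})\mapsto\phi_6(q^6)$ and $\rho_6(-q^9)\mapsto\rho_6(-q^3)$. The powers of $q$ transform as follows: $q^{-1}/q^2=q^{-3}\mapsto q^{-1}$ and $q^2/q^2=1$. This yields \eqn{lambda 3n} and \eqn{lambda 3n+1} directly.

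For \eqn{lambda 3n+2} the four terms transform term by term:
\begin{itemize}
\item $\frac{2}{q}\phi_6(q^{18})$ becomes $\frac{2}{q}\phi_6(q^6)$;
\item $-2q^2\rho_6(-q^9)$ becomes $-2\rho_6(-q^3)$;
\item $-3q^2\Theta_3^3\Theta_{18}^5/(\Theta_6^6\Theta_9)$ becomes $-3\Theta_1^3\Theta_6^5/(\Theta_2^6\Theta_3)$;
\item the last term becomes $-\frac{2}{q}\Theta_1^4\Theta_3^2\Theta_4^2\Theta_{18}^5/(\Theta_2^6\Theta_6^2\Theta_9^2\Theta_{36}^2)$.
\end{itemize}

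There is no real obstacle, since all the analytic work is already contained in Theorem~\thm{ord6diss3}. The only step needing care is confirming that every term really is $q^e$ times a function of $q^3$, in particular that all indices in the last eta quotient ($3,9,12,18,27,54,108$) are multiples of $3$. As a sanity check I would also compare a few $q$-coefficients against the series definition \eqref{lambda6}.
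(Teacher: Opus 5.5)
Your proposal is correct and follows the paper's route exactly. The paper simply records that this corollary follows immediately from the $3$-dissection \eqref{eq:lambda6diss3} of Theorem~\ref{thm:ord6diss3}: sort the terms by exponent modulo $3$, divide by $q^r$, and replace $q^3$ by $q$. Your bookkeeping matches this, including placing both $q^{-1}$ terms in the class $r=2$ and sending $\rho_6(-q^9)\mapsto\rho_6(-q^3)$.

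Your sanity check also works out. From the series definition, $\lambda_6(q)=1-q+3q^2-5q^3+6q^4-7q^5+11q^6+O(q^7)$. This agrees with the leading terms $1-5q+11q^2$, $-1+6q$ and $3-7q$ of the three right-hand sides, and the $q^{-1}$ terms in the third cancel.
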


\begin{corollary}
\label{cor:psiminus6diss3}
We have
    \begin{align}
    \sum_{n \ge 0} P_{{\psi_{-}}_6}(3n) q^{n} & = \dfrac{\psi_6(q^3)}{2q}-\dfrac{\phi_6(q^3)}{2}+\dfrac{\Theta_2^7\Theta_3^7}{2\Theta_1^8\Theta_{6}^5}, \label{eq:psiminus 3n}\\
    \sum_{n \ge 0} P_{{\psi_{-}}_6}(3n+1) q^{n}&= \dfrac{\Theta_2^6\Theta_3^4}{\Theta_1^7\Theta_6^2}, \label{eq:psiminus 3n+1}\\
    \sum_{n \ge 0} P_{{\psi_{-}}_6}(3n+2) q^{n}&= 2\dfrac{\Theta_2^5\Theta_3\Theta_6}{\Theta_1^6}. \label{eq:psiminus 3n+2}
\end{align}
\end{corollary}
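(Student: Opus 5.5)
The statement is Corollary~\ref{cor:psiminus6diss3}, and I would derive it from the $3$-dissection \eqref{eq:psiminus6diss3} of Theorem~\ref{thm:ord6diss3}, exactly as the earlier corollaries are read off from their theorems. The plan is to sort the terms on the right-hand side of \eqref{eq:psiminus6diss3} by the residue class mod $3$ of the exponents of $q$ that they contain.

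First, each quotient $\Theta_6^a\Theta_9^b\Theta_3^c\Theta_{18}^d$ is a power series in $q^3$, since $\Theta_{3k}$ is a series in $q^{3k}$. Likewise $\psi_6(q^9)$ and $\phi_6(q^9)$ are series in $q^9$. The term $\psi_6(q^9)/(2q^3)$ also needs checking. By \eqref{psi6}, $\psi_6(q)=\sum_{n\ge0}(-1)^nq^{(n+1)^2}(q;q^2)_n/(-q;q)_{2n+1}$ begins with $q^1$. Hence $\psi_6(q^9)/q^3$ is a power series in $q^3$ with nonnegative exponents $9k-3$, $k\ge1$.

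Consequently, on the right side of \eqref{eq:psiminus6diss3}:
\begin{itemize}
\item the terms $\psi_6(q^9)/(2q^3)-\phi_6(q^9)/2+\Theta_6^7\Theta_9^7/(2\Theta_3^8\Theta_{18}^5)$ contain only exponents $\equiv 0 \pmod 3$;
\item the term $q\,\Theta_6^6\Theta_9^4/(\Theta_3^7\Theta_{18}^2)$ contains only exponents $\equiv 1$;
\item the term $2q^2\,\Theta_6^5\Theta_9\Theta_{18}/\Theta_3^6$ contains only exponents $\equiv 2$.
\end{itemize}

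By uniqueness of the $3$-dissection, these three groups are the components $A_0,A_1,A_2$ of $\psi_{-_6}(q)$. Dividing $A_1$ by $q$ and $A_2$ by $q^2$, then replacing $q^3$ by $q$, sends $\Theta_{3k}$ to $\Theta_k$. This gives \eqref{eq:psiminus 3n}, \eqref{eq:psiminus 3n+1} and \eqref{eq:psiminus 3n+2}: for instance, $\psi_6(q^9)/(2q^3)$ becomes $\psi_6(q^3)/(2q)$, and $\Theta_6^5\Theta_9\Theta_{18}/\Theta_3^6$ becomes $\Theta_2^5\Theta_3\Theta_6/\Theta_1^6$.

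The only point requiring care is the $q^{-3}$ shift, which must preserve the residue class mod $3$ and must not produce negative powers. The observation above that $\psi_6$ has no constant term settles both. I do not expect any real obstacle, since all the content lies in Theorem~\ref{thm:ord6diss3}.
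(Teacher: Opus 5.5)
Your proposal is correct and follows the paper's route. The paper says only that the corollaries ``follow immediately'' from the theorems. You make this explicit by reading the three components off \eqref{eq:psiminus6diss3} in Theorem~\ref{thm:ord6diss3}, sorting terms by exponent mod $3$, and replacing $q^3$ by $q$. You also check that $\psi_6$ has no constant term, so that $\psi_6(q^9)/(2q^3)$ stays in the $0 \bmod 3$ class with nonnegative exponents, a detail the paper leaves implicit.
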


\begin{corollary}
\label{cor:U08diss3}
We have
    \begin{align}
        \sum_{n \geq 0} P_{U_{0,8}}(3n)q^n & = \phi_6(q) +q\dfrac{\Theta_1^3\Theta_{4}\Theta_{24}^2}{\Theta_2^2\Theta_{8}^2\Theta_{12}},\label{eq:U_0 3n}\\
    \sum_{n \geq 0} P_{U_{0,8}}(3n+1)q^n & = \dfrac{\Theta_2\Theta_4^3\Theta_6\Theta_{24}}{\Theta_1\Theta_8^3\Theta_{12}},\label{eq:U_0 3n+1}\\
    \sum_{n \geq 0} P_{U_{0,8}}(3n+2)q^n & = - \dfrac{U_{0,8}(q^3)}{q}+ \dfrac{1}{q}\dfrac{\Theta_2^2\Theta_{4}\Theta_{12}^5}{\Theta_1\Theta_{6}^2\Theta_{8}^2\Theta_{24}^2}.\label{eq:U_0 3n+2}
    \end{align}
\end{corollary}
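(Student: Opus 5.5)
Corollary~\ref{cor:U08diss3} is read off from \eqn{U08diss3} in Theorem~\ref{thm:ord8diss3}, so I take that theorem as given. The argument has two parts: sort each term of \eqn{U08diss3} into its residue class of exponents modulo $3$, then extract the three components and replace $q^3$ by $q$.

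First I sort the terms. The mock theta part is handled as follows.
\begin{itemize}
\item $\phi_6(q^3)$ is a series in $q^3$, so it lies in the class $r=0$.
\item $U_{0,8}(q^9)$ is a series in $q^9$, so $-U_{0,8}(q^9)/q$ has exponents $\equiv -1\equiv 2 \pmod 3$ and lies in the class $r=2$.
\end{itemize}
Every $\Theta_{3k}$ with $k\ge 2$ is a series in $q^3$, but $\Theta_3$ itself is also a series in $q^3$. Hence each eta-quotient appearing in \eqn{U08diss3} (built from $\Theta_3,\Theta_6,\Theta_{12},\Theta_{18},\Theta_{24},\Theta_{36},\Theta_{72}$) is a power series in $q^3$. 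The power of $q$ in front of each quotient therefore fixes its class.
\begin{itemize}
\item The $q^{-1}$ term goes to $r=2$.
\item The $q^{1}$ term goes to $r=1$.
\item The $q^{3}$ term goes to $r=0$.
\end{itemize}

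Next I extract the components and replace $q^3$ by $q$, which sends $\Theta_{3k}\mapsto\Theta_k$.
\begin{itemize}
\item For $r=0$, this gives $\phi_6(q)+q\,\Theta_1^3\Theta_4\Theta_{24}^2/(\Theta_2^2\Theta_8^2\Theta_{12})$, which is \eqn{U_0 3n}.
\item For $r=1$, I divide by $q$ and substitute, getting $\Theta_2\Theta_4^3\Theta_6\Theta_{24}/(\Theta_1\Theta_8^3\Theta_{12})$, which is \eqn{U_0 3n+1}.
\item For $r=2$, I divide by $q^2$. The prefactor $q^{-1}$ becomes $q^{-3}$, which turns into $q^{-1}$ after substitution. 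This yields $-U_{0,8}(q^3)/q+q^{-1}\Theta_2^2\Theta_4\Theta_{12}^5/(\Theta_1\Theta_6^2\Theta_8^2\Theta_{24}^2)$, which is \eqn{U_0 3n+2}.
\end{itemize}

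The only care needed is bookkeeping. I must check that every theta quotient in \eqn{U08diss3} involves only indices divisible by $3$, and track the exponent shift in the $r=2$ class. The substantive work lies in Theorem~\ref{thm:ord8diss3} itself. That theorem would be proved by the paper's method: rewrite $U_{0,8}=2m(-q,q^4,-1)$, apply the Hickerson--Mortenson splitting of $m(x,q,z)$ into $m(\cdot,q^{9\cdot 4},\cdot)$-type pieces, and verify the resulting theta identities via the $\theta$-Step Algorithm.
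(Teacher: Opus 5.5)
Your proposal is correct and follows the paper's approach: the paper also says the corollary follows immediately from \eqn{U08diss3}, by sorting terms by exponent modulo $3$ and replacing $q^3$ by $q$. Your bookkeeping is right for all three classes, including the $q^{-1}\mapsto q^{-3}\mapsto q^{-1}$ shift in the $r=2$ class, where the resulting $q^{-1}$ terms cancel because the left-hand side has no negative powers.
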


\begin{corollary}
\label{cor:U18diss3}        
We have
    \begin{align}
                \sum_{n \geq 0} P_{U_{1,8}}(3n)q^n & = \dfrac{\phi_6(q)}{2} + \dfrac{\Theta_1^3\Theta_{8}^2\Theta_{12}^5}{2\Theta_{4}^5\Theta_{6}^2\Theta_{24}^2},\label{eq:U_1 3n}\\
    \sum_{n \geq 0} P_{U_{1,8}}(3n+1)q^n & = \dfrac{\Theta_2^4\Theta_8^3\Theta_{12}^2}{\Theta_1\Theta_4^6\Theta_{24}},\label{eq:U_1 3n+1}\\
    \sum_{n \geq 0} P_{U_{1,8}}(3n+2)q^n & = \dfrac{U_{0,8}(q^3)}{2q} - \dfrac{1}{2q} \dfrac{\Theta_2^4\Theta_3\Theta_{8}^2\Theta_{12}^4}{\Theta_{4}^6\Theta_{6}^2\Theta_{24}^2}.\label{eq:U_1 3n+2}
    \end{align}
\end{corollary}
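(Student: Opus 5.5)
The statement to prove is Corollary~\ref{cor:U18diss3}. It should follow directly from the $3$-dissection \eqref{eq:U18diss3} of $U_{1,8}(q)$ in Theorem~\ref{thm:ord8diss3}. The plan is to sort the terms on the right side of \eqref{eq:U18diss3} by the residue class modulo $3$ of the exponents they carry, and then replace $q^3$ by $q$.

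First, note which terms are series in $q^3$ alone. All of $\Theta_3,\Theta_6,\Theta_9,\Theta_{12},\Theta_{18},\Theta_{24},\Theta_{36},\Theta_{72}$ are power series in $q^3$. The function $\phi_6(q^3)$ is a series in $q^3$, and so is $U_{0,8}(q^9)$, since $q^9=(q^3)^3$.

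Next, assign each term to its residue class. The two terms
\[
\dfrac{\phi_6(q^3)}{2}, \qquad \dfrac{\Theta_3^3\Theta_{24}^2\Theta_{36}^5}{2\Theta_{12}^5\Theta_{18}^2\Theta_{72}^2}
\]
make up the $0$ mod $3$ component. The single term $q\,\Theta_6^4\Theta_{24}^3\Theta_{36}^2/(\Theta_3\Theta_{12}^6\Theta_{72})$ is the $1$ mod $3$ component. The two terms with the prefactor $1/q=q^{2}\cdot q^{-3}$ form the $2$ mod $3$ component. Because the decomposition into residue classes of exponents is unique, each component equals the corresponding $A_r(q)$ in the definition of the $3$-dissection. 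This gives
\[
\sum_{n\ge0}P_{U_{1,8}}(3n+r)q^{3n}=q^{-r}A_r(q).
\]

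Then substitute $q\mapsto q^{1/3}$, under which $\Theta_{3k}\mapsto\Theta_k$. This turns $\Theta_3^3\Theta_{24}^2\Theta_{36}^5/(\Theta_{12}^5\Theta_{18}^2\Theta_{72}^2)$ into $\Theta_1^3\Theta_8^2\Theta_{12}^5/(\Theta_4^5\Theta_6^2\Theta_{24}^2)$, which gives \eqref{eq:U_1 3n}. The $r=1$ term becomes $\Theta_2^4\Theta_8^3\Theta_{12}^2/(\Theta_1\Theta_4^6\Theta_{24})$, which is \eqref{eq:U_1 3n+1}. For $r=2$, dividing by $q^2$ leaves $q^{-3}$, which becomes $q^{-1}$. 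This gives $U_{0,8}(q^3)/(2q)$ and $-\tfrac{1}{2q}\Theta_2^4\Theta_3\Theta_8^2\Theta_{12}^4/(\Theta_4^6\Theta_6^2\Theta_{24}^2)$, which is \eqref{eq:U_1 3n+2}.

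The only place needing care is the $r=2$ class. There one must check that the negative power $q^{-1}$ really lies in the class $2$ mod $3$, and that the pole cancels between the mock and theta parts. The pole does cancel, because $U_{0,8}(0)=1$ and the theta quotient also has constant term $1$. Since the left side has no negative powers, this confirms the bookkeeping. Everything else is a routine exponent rescaling.
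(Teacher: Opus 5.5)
Your route is the paper's: it says only that the corollary follows immediately from \eqref{eq:U18diss3}, i.e.\ sort the terms by exponent mod $3$ and put $q^3\mapsto q$. Your bookkeeping for $r=1$ and $r=2$ is correct; the first few coefficients check as $1+q-2q^2-q^3+5q^4+\cdots$ and $2q+q^2-4q^3+\cdots$.

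The class $r=0$ is where it breaks. The first identity, as printed, is false, so no argument can prove it; yours goes through only because it trusts a sign misprint in \eqref{eq:U18diss3}. Every summand of $U_{1,8}(q)$ carries $q^{(n+1)^2}$ with $n\ge 0$, so $P_{U_{1,8}}(0)=0$. But the right-hand side of the first identity has constant term $\tfrac12\phi_6(0)+\tfrac12=1$.

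The correct term is $-\tfrac12\phi_6(q)$. The paper's own derivation of \eqref{eq:U18diss3} in fact produces
\[
U_{1,8}(q)=-\tfrac12\phi_6(q^3)+\tfrac{1}{2q}U_{0,8}(q^9)+\Psi_{U_{1,8},3}(q).
\]
This sign is forced anyway. Theorem~\ref{thm:thm3p3}, with $x=-q$, $q\mapsto q^4$, $z_1=-q^2$ and $z_0=-1$, gives $U_{1,8}(q)=-\tfrac12U_{0,8}(q)+(\text{theta quotient})$. Since \eqref{eq:U08diss3} contains $+\phi_6(q^3)$, the dissection of $U_{1,8}$ must contain $-\tfrac12\phi_6(q^3)$.

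Numerically, $U_{1,8}(q)=q-q^3+q^4+2q^5-q^6-2q^7+q^8+3q^9-q^{10}-4q^{11}+2q^{12}+\cdots$. Use $\phi_6(q)=1-q+2q^2-q^3+q^4+\cdots$ and $\Theta_1^3\Theta_8^2\Theta_{12}^5/(\Theta_4^5\Theta_6^2\Theta_{24}^2)=1-3q+5q^3+5q^4+\cdots$. Then $-\tfrac12\phi_6(q)$ plus half the quotient equals $-q-q^2+3q^3+2q^4+\cdots$. This matches $P_{U_{1,8}}(3n)$ for $n\le 4$, whereas $+\tfrac12\phi_6(q)$ fails already at $q^0$.

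Checking the $r=2$ class by matching the $q^{-1}$ terms was the right instinct. The same constant-term check on the $r=0$ class would have exposed the problem at once. Replace $+\tfrac12\phi_6$ by $-\tfrac12\phi_6$ in both \eqref{eq:U18diss3} and the first identity, and the rest of your argument stands unchanged.
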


\begin{corollary}
\label{cor:V0diss3}
We have
    \begin{align}
        \sum_{n \geq 0} P_{V_{0,8}}(3n)q^n & = \dfrac{2}{q^3}\,\psi_6(q^8) + \dfrac{\Theta_2^5\Theta_6^2\Theta_8^4}{\Theta_1^3\Theta_3\Theta_4^4\Theta_{16}^2} + q\,\dfrac{\Theta_2^8\Theta_3^2\Theta_8\Theta_{12}^2\Theta_{48}}{\Theta_1^4\Theta_4^4\Theta_6^3\Theta_{16}\Theta_{24}} \nonumber \\
        & + 4q^3\,\dfrac{\Theta_2\Theta_4^2\Theta_{48}^2}{\Theta_1^2\Theta_8\Theta_{24}},\label{eq:V0 3n}\\  
    \sum_{n \geq 0} P_{V_{0,8}}(3n+1)q^n & = 2\dfrac{\Theta_2^3\Theta_3\Theta_4\Theta_{12}}{\Theta_1^3\Theta_6\Theta_8},\label{eq:V0 3n+1}\\
    \sum_{n \geq 0} P_{V_{0,8}}(3n+2)q^n & = - \dfrac{2}{q}\phi_6(q^{8}) +\dfrac{2}{q}\dfrac{\Theta_2\Theta_{8}^3\Theta_{12}^2}{\Theta_1^2\Theta_{16}^2\Theta_{24}}.\label{eq:V0 3n+2}
    \end{align}
\end{corollary}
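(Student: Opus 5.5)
The statement is Corollary~\ref{cor:V0diss3}. It follows from the 3-dissection \eqref{eq:V08diss3} of $V_{0,8}(q)$ in Theorem~\ref{thm:ord8diss3} by sorting the right-hand side according to exponents modulo 3 and then substituting $q\mapsto q^{1/3}$.

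\textbf{Sorting the terms.} Every theta quotient in \eqref{eq:V08diss3} involves only $\Theta_{3k}$, so it is a power series in $q^3$. The mock terms $\psi_6(q^{24})$ and $\phi_6(q^{24})$ are also series in $q^3$, since $24\equiv 0 \pmod 3$. Each term's residue class is therefore fixed by its monomial prefactor:
\begin{itemize}[label={}]
\item Class $0$: $\frac{2}{q^9}\psi_6(q^{24})$, the quotient with coefficient $1$, $q^3(\cdots)$, and $4q^9(\cdots)$.
\item Class $1$: $2q(\cdots)$.
\item Class $2$: $-\frac{2}{q}\phi_6(q^{24})$ and $\frac{2}{q}(\cdots)$, since $q^{-1}=q^{2}\cdot q^{-3}$.
\end{itemize}

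\textbf{Extracting the components.}
\begin{itemize}[label={}]
\item For $3n$: replace $q^3$ by $q$. Then $q^{-9}\psi_6(q^{24})$ becomes $q^{-3}\psi_6(q^8)$, $q^3$ becomes $q$, $q^9$ becomes $q^3$, and each $\Theta_{3k}$ becomes $\Theta_k$. This gives \eqref{eq:V0 3n}.
\item For $3n+1$: divide $2q(\cdots)$ by $q$ and then rescale. This gives \eqref{eq:V0 3n+1}.
\item For $3n+2$: divide by $q^2$, so $2q^{-1}$ becomes $2q^{-3}$, which becomes $2q^{-1}$ after rescaling. This gives \eqref{eq:V0 3n+2} with $\phi_6(q^8)$.
\end{itemize}
As a check on the $3n+1$ case: $\Theta_6^3\Theta_9\Theta_{12}\Theta_{36}/(\Theta_3^3\Theta_{18}\Theta_{24})$ rescales to $\Theta_2^3\Theta_3\Theta_4\Theta_{12}/(\Theta_1^3\Theta_6\Theta_8)$, as stated.

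\textbf{Main obstacle.} The real work is Theorem~\ref{thm:ord8diss3} itself, which is assumed here. Its proof would start from $V_{0,8}(q)=-\frac{2}{q}m(1,q^8,q)-\frac{\Theta_2^3\Theta_4}{\Theta_1^2\Theta_8}$. One dissects the Appell--Lerch sum with the $n$-splitting formula for $m(x,q,z)$ (with $n=3$), and then uses Hickerson--Mortenson's changing-$z$ formula to move to $\psi_6(q^{24})$ and $\phi_6(q^{24})$. The leftover theta-quotient identities are verified with the $\theta$-Step Algorithm of Section~\sect{meth}, using Sturm-type bounds on a suitable $\Gamma_1(N)$. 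Given the theorem, the corollary is only bookkeeping: the one point to watch is that every theta factor really is a function of $q^3$, so that no term mixes residue classes.
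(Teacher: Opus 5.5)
Your proposal is correct and matches the paper, which says this corollary follows immediately from the $3$-dissection \eqref{eq:V08diss3} in Theorem~\ref{thm:ord8diss3}. Sorting the terms by residue class modulo $3$ and substituting $q^3\mapsto q$ is exactly that bookkeeping, and your outline of how the theorem itself is proved (Corollary~\ref{cor:cor3p6} with $n=3$, then Theorem~\ref{thm:thm3p3} with $z_0=-q^{24}$, then a theta identity on $\Gamma_1(144)$) agrees with the paper's argument.
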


\begin{corollary}
\label{cor:V18diss3}             
For all $n \ge 0$, we have
    \begin{align}
     \sum_{n \geq 0} P_{V_{1,8}}(3n)q^n & = -\dfrac{\phi_6(-q^2)}{2} + \dfrac{\Theta_3^2\Theta_{4}^7}{2\,\Theta_1^2\Theta_2^2\Theta_{6}\Theta_{8}^3},\label{eq:V1 3n}\\
    \sum_{n \geq 0} P_{V_{1,8}}(3n+1)q^n & = - \dfrac{U_{0,8}(-q^{6})}{2q} + \dfrac{1}{2q}\dfrac{\Theta_2\Theta_{6}^3\Theta_{8}^5}{\Theta_1^2\Theta_{4}^2\Theta_{12}\Theta_{16}^2\Theta_{24}},\label{eq:V1 3n+1}\\
    \sum_{n \geq 0} P_{V_{1,8}}(3n+2)q^n & = \dfrac{\Theta_2^4\Theta_3\Theta_8\Theta_{12}}{\Theta_1^3\Theta_4^2\Theta_6}.\label{eq:V1 3n+2}
\end{align}
\end{corollary}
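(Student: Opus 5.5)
The statement to prove is Corollary~\ref{cor:V18diss3}. It follows directly from the $3$-dissection \eqref{eq:V18diss3} in Theorem~\ref{thm:ord8diss3}, which we take as given. The plan is to sort every term on the right of \eqref{eq:V18diss3} by the residue class modulo $3$ of the exponents of $q$ it contains, collect each class, and then substitute $q\mapsto q^{1/3}$ after removing the appropriate power of $q$.

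First I check that each term is supported on a single residue class. The products $\Theta_{3k}$ are power series in $q^3$, so each quotient of such products is a series in $q^3$. The functions $\phi_6(-q^6)$ and $U_{0,8}(-q^{18})$ are likewise series in $q^3$. The prefactors $1$, $q^{-2}$ and $q^2$ then fix the classes:
\begin{itemize}
\item The terms $-\phi_6(-q^6)/2$ and $\Theta_9^2\Theta_{12}^7/(2\Theta_3^2\Theta_6^2\Theta_{18}\Theta_{24}^3)$ have exponents $\equiv 0 \pmod 3$.
\item The two terms with prefactor $q^{-2}$ have exponents $\equiv 1 \pmod 3$, since $-2\equiv 1$.
\item The term with prefactor $q^2$ has exponents $\equiv 2 \pmod 3$.
\end{itemize}
Since $V_{1,8}(q)$ is a power series, the $m$-dissection is unique. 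So $\sum P_{V_{1,8}}(3n+r)q^{3n+r}$ equals the sum of the terms in class $r$.

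Next I normalize each class.
\begin{itemize}
\item For $r=0$, replacing $q^3$ by $q$ sends $\Theta_{3k}$ to $\Theta_k$ and $\phi_6(-q^6)$ to $\phi_6(-q^2)$. This gives \eqref{eq:V1 3n}.
\item For $r=1$, I factor out $q$, writing $q^{-2}=q\cdot q^{-3}$, and then replace $q^3$ by $q$. The prefactor becomes $1/(2q)$, $U_{0,8}(-q^{18})$ becomes $U_{0,8}(-q^6)$, and the eta-quotient becomes $\Theta_2\Theta_6^3\Theta_8^5/(\Theta_1^2\Theta_4^2\Theta_{12}\Theta_{16}^2\Theta_{24})$. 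This gives \eqref{eq:V1 3n+1}.
\item For $r=2$, dividing by $q^2$ and rescaling gives $\Theta_2^4\Theta_3\Theta_8\Theta_{12}/(\Theta_1^3\Theta_4^2\Theta_6)$, which is \eqref{eq:V1 3n+2}.
\end{itemize}

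There is essentially no obstacle here. The only care needed is the bookkeeping of the negative power $q^{-2}$, which lands in the class $r=1$ rather than $r=2$, and keeping the correct index in each rescaled theta quotient. As a sanity check, I would compare the first few coefficients against the series definition \eqref{V18}.
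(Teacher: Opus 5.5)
Your proposal is correct and follows the paper's own route: the paper states this corollary as an immediate consequence of the $3$-dissection \eqref{eq:V18diss3}, read off by sorting its terms by exponent modulo $3$ (the $q^{-2}$ terms fall in the class $r=1$) and rescaling $q^3\mapsto q$. Your rescaled eta-quotients and mock theta arguments in each residue class agree with the stated formulas.
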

\section{Properties of Theta Functions and Appell-Lerch Sums}
\label{sec:proptm}
We collect some well-known theta function identities from Mortenson's paper \cite[Section~3]{mortenson2024ramanujan}. Certain theta functions can also be expressed in terms of eta products $\Theta_m$:
{\allowdisplaybreaks 
\begin{subequations}
\begin{gather}
\overline{\Theta}_{0,1} =2\overline{\Theta}_{1,4}
=\frac{2\Theta_2^2}{\Theta_1},  \ 
\overline{\Theta}_{1,2}=\frac{\Theta_2^5}{\Theta_1^2\Theta_4^2}, \ 
\Theta_{1,2}=\frac{\Theta_1^2}{\Theta_2},   \ 
\overline{\Theta}_{1,3}=\frac{\Theta_2\Theta_3^2}{\Theta_1\Theta_6},
   \label{eq:thetaeta1}\\
\Theta_{1,4}=\frac{\Theta_1\Theta_4}{\Theta_2},  
\  \Theta_{1,6}=\frac{\Theta_1\Theta_6^2}{\Theta_2\Theta_3},   \ 
\overline{\Theta}_{1,6}
=\frac{\Theta_2^2\Theta_3\Theta_{12}}{\Theta_1\Theta_4\Theta_6}.
\label{eq:thetaeta2}
\end{gather}
\end{subequations}}%

We have 
\begin{align}            
\Theta(x;q)&=\Theta(q/x;q)=-x\Theta(x^{-1};q)
\label{eq:theta1},         \\
\Theta(xq;q)&=-x^{-1}\Theta(x;q)
\label{eq:theta2},         \\
\Theta(x;q)&={\Theta_1}\Theta(x;q^2)\Theta(qx;q^2)/{\Theta_2^2}, 
\label{eq:theta3}          \\
\Theta(z;q)&=\Theta(-z^2q;q^4)-z\Theta(-z^2q^3;q^4),
\label{eq:theta4}
\end{align}   

We will also require the following identities:
\begin{lemma}
\label{lem:etadiss2}
We have the following $2$-dissections for the ordinary partition function and its inverse:
\begin{align}
    \dfrac{1}{\Theta_1} &= \dfrac{\overline{\Theta}_{6,16}}{\Theta_2^2} 
    + q\,\dfrac{\overline{\Theta}_{2,16}}{\Theta_2^2}, 
    \label{eq:eta1diss2}\\
    \Theta_1 &= \dfrac{\Theta_2 \overline{\Theta}_{6,16}}{\Theta_4} 
    + q\,\dfrac{\Theta_2 \overline{\Theta}_{2,16}}{\Theta_4}.
    \label{eq:eta2diss2}
\end{align}
\end{lemma}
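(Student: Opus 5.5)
Both identities follow from the quartic splitting \eqref{eq:theta4} of the triple product, applied to a well-chosen theta function. In each case the terms of the resulting series have even or odd exponents, which gives the even and odd parts directly.

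For \eqref{eq:eta1diss2}, start from the product identity $\overline{\Theta}_{0,1}=2\Theta_2^2/\Theta_1$ in \eqref{eq:thetaeta1}. It gives
\[
\frac{1}{\Theta_1}=\frac{\overline{\Theta}_{0,1}}{2\Theta_2^2}=\frac{\Theta(-1;q)}{2\Theta_2^2}.
\]
Now apply \eqref{eq:theta4} with $z=-1$, but with base $q^{4}$ replaced by working in the base suited to weight $16$. Concretely, write $\Theta(-1;q)=\sum_n q^{\binom n2}$ and split $n$ by parity, or better, split $n$ modulo $4$. The exponents $\binom n2$ for $n\equiv 0,1 \pmod 4$ are even, and those for $n\equiv 2,3\pmod 4$ are odd. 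Pairing $n\mapsto 1-n$ (which preserves $\binom n2$) collapses the four classes into two:
\[
\sum_{n\equiv 0,1 \ (4)} q^{\binom n2}=2\sum_{k\in\mathbb Z} q^{k(4k-1)\cdot 2}, \qquad \sum_{n\equiv 2,3\ (4)} q^{\binom n2}=2\sum_{k} q^{(4k+2)(4k+1)/2}.
\]
Each of these is a triple-product series in $q^{16}$: the first is $2\,\overline{\Theta}(q^{6};q^{16})$ and the second is $2q\,\overline{\Theta}(q^{2};q^{16})$. To confirm this, match $\sum_m q^{16\binom m2+6m}$ with $m=-2k$, and similarly for the second series. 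Dividing by $2\Theta_2^2$ gives \eqref{eq:eta1diss2}. The main bookkeeping obstacle is getting the exponent normalization exactly right so that the parameters come out as $6,16$ and $2,16$ and not some equivalent pair. The equivalent pair would be harmless anyway by \eqref{eq:theta1}, since $\overline{\Theta}_{a,16}=\overline{\Theta}_{16-a,16}$ up to the trivial symmetry, so that obstacle is only notational.

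For \eqref{eq:eta2diss2}, write $\Theta_1=(\Theta_1/\Theta_2)\cdot\Theta_2$ and use \eqref{eq:thetaeta1}: $\Theta_{1,2}=\Theta_1^2/\Theta_2$ gives $\Theta_1=\Theta_1^2\cdot\frac{1}{\Theta_1}$. The simplest route is instead to observe that $\Theta_1\Theta_4/\Theta_2^2\cdot(\Theta_2^2/\Theta_1)$ is trivial. So the cleanest approach is
\[
\Theta_1=\frac{\Theta_1^2}{\Theta_2}\cdot\frac{\Theta_2}{\Theta_1}=\Theta(q;q^2)\cdot\frac{\Theta_2}{\Theta_1}.
\]
Here $\Theta(q;q^2)=\Theta_1^2/\Theta_2=\sum(-1)^nq^{n^2}$ is a function of $-q$ versus $q$ in a controlled way: it is the image of $\overline{\Theta}_{0,1}$-type series under $q\to -q$. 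Concretely, $\Theta_1^2/\Theta_2$ at $-q$ equals $\Theta_2^5/(\Theta_1^2\Theta_4^2)$. It is easier still to substitute $q\to-q$ directly into \eqref{eq:eta1diss2}. Under $q\mapsto -q$ one has $\Theta_1\mapsto \Theta_2^3/(\Theta_1\Theta_4)$, while $\Theta_2,\Theta_4,\overline{\Theta}_{a,16}$ with even $a$ are unchanged. So \eqref{eq:eta1diss2} becomes
\[
\frac{\Theta_1\Theta_4}{\Theta_2^3}=\frac{\overline{\Theta}_{6,16}}{\Theta_2^2}-q\frac{\overline{\Theta}_{2,16}}{\Theta_2^2}.
\]
This gives the expansion with a sign change on the odd part; the intended statement should be checked against this. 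I would verify the $q\mapsto -q$ transformation of $\Theta_1$ (the standard $(-q;-q)_\infty=\Theta_2^3/(\Theta_1\Theta_4)$), then multiply through by $\Theta_2^3/\Theta_4$.

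The only real step to double-check is the sign of the odd part in \eqref{eq:eta2diss2}. The constant-order coefficients give $\Theta_1=1-q-\cdots$, which forces a minus sign on the $q$-term. So I expect to confirm, and if needed correct, that sign by comparing the first few coefficients. Alternatively, \eqref{eq:eta2diss2} can be proved as an eta-quotient identity by the $\theta$-Step Algorithm of \sect{meth}, checking sufficiently many coefficients.
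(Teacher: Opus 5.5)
Your argument is essentially the paper's. The mod-$4$ split of $\sum_n q^{\binom n2}$ folded by $n\mapsto 1-n$ is just $\overline{\Theta}_{0,1}=2\overline{\Theta}_{1,4}$ followed by \eqref{eq:theta4}; the paper applies \eqref{eq:theta4} directly to $\overline{\Theta}_{1,4}=\Theta_2^2/\Theta_1$. The second identity comes from $q\mapsto -q$ together with $(-q;-q)_\infty=\Theta_2^3/(\Theta_1\Theta_4)$, exactly as in the paper. One small slip: the correct matching indices are $m=k$ for the first series and $m=-k$ for the second, not $m=-2k$.

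Your sign correction is right, and you need not hedge it. The substitution gives $\Theta_1=\Theta_2\overline{\Theta}_{6,16}/\Theta_4-q\,\Theta_2\overline{\Theta}_{2,16}/\Theta_4$, consistent with $\Theta_1=1-q-q^2+\cdots$. The paper's own $q\mapsto -q$ step yields this minus sign as well, so the ``$+$'' in the printed \eqref{eq:eta2diss2} is a typo, and the printed version admits no proof. The exploratory detour through $\Theta(q;q^2)$ at the start of your second part can simply be deleted.
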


\begin{proof}
We begin by writing
\begin{align*}
    \dfrac{1}{\Theta_1}
    &= \dfrac{1}{\Theta_2^2}\,\dfrac{\Theta_2^2}{\Theta_1} \\
    &= \dfrac{1}{\Theta_2^2}\,\overline{\Theta}_{1,4} \\
    &= \dfrac{1}{\Theta_2^2}
       \left( \overline{\Theta}_{6,16}
       + q\,\overline{\Theta}_{2,16} \right),
\end{align*}
where, in the final step, we have used \eqref{eq:theta4},
with $z=-q$ and $q$ replaced by $q^4$.
This establishes \eqn{eta1diss2}.

Replacing $q$ by $-q$ and invoking the identity
\[
(-q;-q)_{\infty} = \dfrac{\Theta_2^3}{\Theta_1 \Theta_4},
\]
in \eqref{eq:eta1diss2}, we immediately deduce \eqref{eq:eta2diss2}.
\end{proof}

\begin{remark}
Oddmund Kolberg \cite{kolberg1957some} obtained the $2$-, $3$-, $5$-, and $7$-dissections of the ordinary partition function. In the case of the $2$-dissection, the even and odd parts each split into two components (see Equations~$(6.1)$ and~$(6.2)$). In our approach, both the even and odd parts are given by single components (see Equation~\eqref{eq:eta1diss2}).
\end{remark}

Next, we recall the definition and several fundamental properties of the Appell--Lerch sum, which will be used throughout the remainder of this paper.

\begin{definition}[{\cite[Definition~1.1]{hickerson2014hecke}}]
\label{def:1p1}               
Let $x, z \in \mathbb{C}^*$ be such that neither $z$ nor $xz$ is an integral power of $q$. 
The Appell--Lerch sum is defined by
\begin{equation}
\label{eq:mxqz}
m(x,q,z) := \dfrac{1}{\Theta(z;q)} 
\sum_{r \in \mathbb{Z}} \dfrac{(-1)^r q^{\binom{r}{2}} z^r}{1 - q^{r-1}xz}.
\end{equation}
\end{definition}

\begin{proposition}[{\cite[Proposition~3.1]{hickerson2014hecke}}]
\label{propo:prop3p1}
For generic $x,z \in \C^\ast$, the Appell--Lerch sum satisfies
\begin{align}
m(x,q,z) &= m(x,q,qz), \label{eq:m_shift_z} \\
m(x,q,z) &= x^{-1} m(x^{-1},q,z^{-1}), \label{eq:m_inversion} \\
m(qx,q,z) &= 1- x\, m(x,q,z), \label{eq:m_qx_relation} \\
m(x,q,z) &= 1 - q^{-1} x\, m(q^{-1}x,q,z), \label{eq:m_q_inverse_relation} \\
m(x,q,z) &= x^{-1} - x^{-1} m(qx,q,z). \label{eq:m_alternative_form}
\end{align}
\end{proposition}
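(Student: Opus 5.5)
We work directly from Definition~\ref{def:1p1} and use only the elementary theta relations \eqref{eq:theta1} and \eqref{eq:theta2}. Throughout, we write
\[
S(x,z):=\sum_{r\in\mathbb{Z}}\frac{(-1)^rq^{\binom r2}z^r}{1-q^{r-1}xz},
\qquad\text{so that}\qquad m(x,q,z)=\frac{S(x,z)}{\Theta(z;q)}.
\]
All series converge absolutely for generic $x,z$, so reindexing the sums is harmless.

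\textbf{Step 1: \eqref{eq:m_shift_z}.} Replace $z$ by $qz$. The general term of $S(x,qz)$ is $(-1)^rq^{\binom r2+r}z^r/(1-q^rxz)$. Shifting $r\mapsto r-1$ and using $\binom{r-1}{2}+(r-1)=\binom r2$ turns it into $-z^{-1}$ times the general term of $S(x,z)$. Hence $S(x,qz)=-z^{-1}S(x,z)$. By \eqref{eq:theta2}, $\Theta(qz;q)=-z^{-1}\Theta(z;q)$ as well, so the factor $-z^{-1}$ cancels in the quotient.

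\textbf{Step 2: \eqref{eq:m_inversion}.} Substitute $x\mapsto x^{-1}$ and $z\mapsto z^{-1}$. Write each denominator as
\[
\frac{1}{1-a^{-1}}=-\frac{a}{1-a},\qquad a=q^{1-r}xz.
\]
Then send $r\mapsto -r$ and shift by one, using $\binom{-r}{2}=\binom{r+1}{2}$. This should give $S(x^{-1},z^{-1})=c\,S(x,z)$ with an explicit monomial factor $c$ in $x$ and $z$. Comparing with $\Theta(z^{-1};q)=-z^{-1}\Theta(z;q)$ from \eqref{eq:theta1} should then leave exactly the factor $x^{-1}$. This bookkeeping of powers of $x,z,q$ is the only fiddly point of the whole proof, and it is where I expect the main obstacle. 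I will carry it out by tracking the exponent of $q$ in the general term before and after each reindexing.

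\textbf{Step 3: \eqref{eq:m_qx_relation}.} The key observation is that
\[
\sum_r(-1)^rq^{\binom r2}z^r\,\frac{1-q^rxz}{1-q^rxz}=\Theta(z;q).
\]
Splitting the numerator, the first piece is $S(qx,z)=\Theta(z;q)\,m(qx,q,z)$. The second piece is $-xz$ times $\sum_r(-1)^rq^{\binom r2}q^rz^r/(1-q^rxz)$, which is $S(x,qz)=\Theta(qz;q)\,m(x,q,qz)$. By Step 1 and \eqref{eq:theta2}, this equals $-z^{-1}\Theta(z;q)\,m(x,q,z)$. Therefore
\[
\Theta(z)\,m(qx,q,z)+x\,\Theta(z)\,m(x,q,z)=\Theta(z),
\]
and dividing by $\Theta(z;q)$ gives $m(qx,q,z)=1-x\,m(x,q,z)$.

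\textbf{Step 4: the last two identities.} Replacing $x$ by $q^{-1}x$ in \eqref{eq:m_qx_relation} gives \eqref{eq:m_q_inverse_relation} directly. Solving \eqref{eq:m_qx_relation} for $m(x,q,z)$ gives \eqref{eq:m_alternative_form}.
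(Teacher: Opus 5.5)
Your proof is correct. The paper gives no proof of this proposition; it simply quotes it from Hickerson--Mortenson. Your direct verification from Definition~\ref{def:1p1}, using only \eqref{eq:theta1} and \eqref{eq:theta2}, therefore supplies an argument the paper omits. The citation buys brevity, while your route makes the section self-contained and shows that nothing beyond reindexing the defining series is needed.

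Steps 1, 3 and 4 are complete as written. In Step 3 you could even bypass $m$ and use $S(x,qz)=-z^{-1}S(x,z)$ directly, which gives $\Theta(z;q)=S(qx,z)+x\,S(x,z)$.

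The one slip is in Step 2, which you left unexecuted. After $r\mapsto -r$ the denominator is $1-q^{r+1}xz$, so you must shift by two, not one, to return to $1-q^{r-1}xz$. Equivalently, substitute $r\mapsto 2-r$ in one go. Using $\binom{2-r}{2}=\binom{r-1}{2}$ and $\binom{r-1}{2}+r-1=\binom r2$, this gives
\[
S(x^{-1},z^{-1})=-xz^{-1}\,S(x,z).
\]
Dividing by $\Theta(z^{-1};q)=-z^{-1}\Theta(z;q)$ then yields $m(x^{-1},q,z^{-1})=x\,m(x,q,z)$, which is \eqref{eq:m_inversion}.
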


We also require the following difference formula, which provides a useful relation 
between the values of $m(x,q,z)$ at distinct arguments.

\begin{theorem}[{\cite[Theorem~3.3]{hickerson2014hecke}}]
\label{thm:thm3p3}
For generic $x, z_0, z_1 \in \C^\ast$, we have
\begin{equation}\label{eq:m_difference}
m(x,q,z_1) - m(x,q,z_0)
=
\frac{
z_0 \Theta_1^{\,3}\,
\Theta\!\left(z_1/z_0;q\right)\,
\Theta(x z_0 z_1;q)
}{
\Theta(z_0;q)\,
\Theta(z_1;q)\,
\Theta(x z_0;q)\,
\Theta(x z_1;q)
}.
\end{equation}
\end{theorem}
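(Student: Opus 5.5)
The plan is to fix generic $x$ and $z_0$ and treat both sides of \eqref{eq:m_difference} as meromorphic functions of $z_1\in\C^\ast$. Write $g(z_1)$ for the left side minus the right side. I will show that $g$ is elliptic, i.e.\ invariant under $z_1\mapsto qz_1$, and that it has no poles. It is then constant, and since it vanishes at $z_1=z_0$ it is identically zero.

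\emph{Step 1: quasi-periodicity.} By \eqref{eq:m_shift_z}, $m(x,q,z_1)$ is invariant under $z_1\mapsto qz_1$, and $m(x,q,z_0)$ does not depend on $z_1$. For the right side I apply \eqref{eq:theta2} to each factor involving $z_1$:
\[
\Theta(qz_1/z_0;q)=-\tfrac{z_0}{z_1}\Theta(z_1/z_0;q),\qquad
\Theta(qxz_0z_1;q)=-\tfrac{1}{xz_0z_1}\Theta(xz_0z_1;q),
\]
\[
\Theta(qz_1;q)=-z_1^{-1}\Theta(z_1;q),\qquad
\Theta(qxz_1;q)=-(xz_1)^{-1}\Theta(xz_1;q).
\]
The resulting multiplier is $\frac{z_0}{z_1}\cdot\frac{1}{xz_0z_1}\cdot z_1\cdot xz_1=1$, so the right side is also invariant and $g$ is elliptic.

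\emph{Step 2: location of poles.} From \eqref{eq:mxqz}, $m(x,q,z_1)$ can only have simple poles where $\Theta(z_1;q)=0$, i.e.\ $z_1\in q^{\mathbb Z}$, or where some denominator $1-q^{r-1}xz_1$ vanishes, i.e.\ $z_1\in x^{-1}q^{\mathbb Z}$. The right side has the denominator $\Theta(z_1;q)\Theta(xz_1;q)$, so it has simple poles on the same two orbits. Hence, in a fundamental annulus $|q|<|z_1|\le 1$, $g$ has at most two simple poles, one from each orbit.

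\emph{Step 3: matching residues on the orbit $x^{-1}q^{\mathbb Z}$.} This is where the real computation sits, and I will do it at $z_1=1/x$ because there only one term of the series is singular. Near $xz_1=1$ only the $r=1$ term blows up, so
\[
m(x,q,z_1)\sim \frac{-z_1}{\Theta(z_1;q)(1-xz_1)}.
\]
On the right side I use $\Theta(xz_1;q)\sim(1-xz_1)\Theta_1^3$, which follows from the product form of $\Theta$. Substituting $z_1=1/x$ and using $\Theta(1/(xz_0);q)=-\frac{1}{xz_0}\Theta(xz_0;q)$ from \eqref{eq:theta1}, the singular part of the right side becomes
\[
\frac{z_0\cdot\bigl(-\frac{1}{xz_0}\bigr)\Theta(xz_0;q)}{\Theta(1/x;q)\,\Theta(xz_0;q)\,(1-xz_1)}=\frac{-1/x}{\Theta(1/x;q)(1-xz_1)}.
\]
This agrees with the left side, so $g$ has no pole at $z_1=1/x$, and by Step 1 none on the whole orbit $x^{-1}q^{\mathbb Z}$.

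\emph{Step 4: conclusion.} An elliptic function has zero residue sum over a fundamental annulus. After Step 3, $g$ has at most one simple pole per annulus, at $z_1\in q^{\mathbb Z}$, so that residue must also be zero. Then $g$ is holomorphic on $\C^\ast$ and elliptic, hence bounded on a compact annulus and therefore constant. Setting $z_1=z_0$, the left side is $0$ and the right side is $0$ because $\Theta(1;q)=0$. So $g\equiv0$, which proves \eqref{eq:m_difference} for generic parameters, and it extends to all admissible parameters by analytic continuation.

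The main obstacle I expect is Step 3, specifically keeping the sign and normalisation conventions of \eqref{eq:theta1}–\eqref{eq:theta2} straight. Choosing the pole at $z_1=1/x$ rather than at $z_1=1$ avoids having to evaluate the full series $\sum(-1)^rq^{\binom r2}/(1-q^{r-1}x)$ in closed form.
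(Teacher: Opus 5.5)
Your proof is correct. The paper gives no proof of this statement: it imports it from Hickerson--Mortenson. There it is the $m(x,q,z)$-form of Zwegers' formula for $\mu(u+w,v+w)-\mu(u,v)$, with $e^{2\pi i u}=xz_0$, $e^{2\pi i v}=z_0$, $e^{2\pi i w}=z_1/z_0$. So the comparison is between a citation and your self-contained argument, which uses only \eqref{eq:mxqz}, \eqref{eq:m_shift_z}, \eqref{eq:theta1}--\eqref{eq:theta2} and the product formula for $\Theta$. Your quasi-periodicity multiplier is right. Your singular parts at $z_1=1/x$ also agree: both sides equal $-x^{-1}/(\Theta(1/x;q)(1-xz_1))$ there.

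One phrase in Step 4 should be tightened. For a function with $g(qz_1)=g(z_1)$, what sums to zero over a fundamental annulus is the residues of $g(z_1)\,dz_1/z_1$, equivalently residues in the additive variable $u$ with $z_1=e^{2\pi i u}$. The residues of $g(z_1)\,dz_1$ need not sum to zero, because substituting $z_1\mapsto qz_1$ in $\oint g\,dz_1$ introduces a factor $q$. With a single remaining simple pole $z_*\neq 0$ this changes nothing, since $\operatorname{Res}_{z_*}(g(z_1)/z_1)=z_*^{-1}\operatorname{Res}_{z_*}g$.

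That step does real work. Comparing residues at $z_1=1$ in the finished identity gives
\[
\sum_{r\in\mathbb{Z}}\frac{(-1)^r q^{\binom r2}}{1-q^{r-1}x}=-\frac{\Theta_1^3}{\Theta(x;q)},
\]
which is the classical partial-fraction expansion of $1/\Theta(x;q)$. Computing the residue at $z_1=1/x$ directly and letting the residue theorem handle the orbit $q^{\mathbb{Z}}$ is exactly what spares you from proving that identity separately.
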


Finally, we recall the decomposition formula for the Appell--Lerch sum, 
which will play a central role in establishing the $2$- and $3$-dissection 
identities developed later in this paper.

\begin{corollary}[{\cite[Corollary~3.6]{hickerson2014hecke}}]
\label{cor:cor3p6}   
Let $n$ be a positive odd integer. For generic $x,z,z'\in\C^\ast$, we have
\begin{align}
m(x,q,z)
&= \sum_{r=0}^{n-1} q^{-\binom{r+1}{2}}(-x)^r\,
m\!\left(q^{\binom{n}{2}-nr}x^n,\;q^{n^2},\;z'\right) \notag\\
&\quad + \dfrac{z'\Theta_n^3}{\Theta(xz;q)\,\Theta(z';q^{n^2})}
\sum_{r=0}^{n-1} q^{r(r-n)/2}(-x)^r z^{\,r-(n-1)/2}\,
\dfrac{\Theta\!\left(q^r x^n z z';\,q^n\right)\,
       \Theta\!\left(q^{nr}z^n/z';\,q^{n^2}\right)}
     {\Theta\!\left(x^n z',\,q^r z;\,q^n\right)} .
\label{eq:m_odd_n}
\end{align}

If $n$ is a positive even integer, then for generic $x,z,z'\in\C^\ast$,
\begin{align}
m(x,q,z)
&= \sum_{r=0}^{n-1} q^{-\binom{r+1}{2}}(-x)^r\,
m\!\left(-q^{\binom{n}{2}-nr}x^n,\;q^{n^2},\;z'\right) \notag\\
&\quad + \dfrac{z'\Theta_n^3}{\Theta(xz;q)\,\Theta(z';q^{n^2})}
\sum_{r=0}^{n-1} q^{r(r-n+1)/2}(-x)^r z^{\,r+1-n/2}\,
\dfrac{\Theta\!\left(-q^{r+n/2}x^n z z';\,q^n\right)\,
       \Theta\!\left(q^{nr}z^n/z';\,q^{n^2}\right)}
     {\Theta\!\left(-q^{n/2}x^n z',\,q^r z;\,q^n\right)} .
\label{eq:m_even_n}
\end{align}
\end{corollary}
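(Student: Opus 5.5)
Since this is \cite[Corollary~3.6]{hickerson2014hecke}, it may simply be quoted. If I were to reprove it, I would do it in two stages. First I would obtain the formula for one convenient choice of $z'$, where the theta-quotient term is as simple as possible. Then I would pass to arbitrary $z'$ using the difference formula of Theorem~\ref{thm:thm3p3}.

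\emph{Stage one (special $z'$).} Starting from \eqref{eq:mxqz}, put $w=q^{r-1}xz$ and use the finite geometric identity
\[
\frac{1}{1-w}=\frac{\sum_{j=0}^{n-1}w^j}{1-w^n}.
\]
The denominator becomes $1-q^{n(r-1)}x^nz^n$. Next I would split the summation index as $r=nk+t$ with $0\le t\le n-1$. Then $q^{\binom{nk+t}{2}}$ is a quadratic in $k$ with leading term $q^{n^2\binom{k}{2}}$ times a linear factor, and $(-1)^{nk+t}$ is where the parity of $n$ enters. For $n$ even it produces the extra sign $-x^n$ in the first argument and the shift $-q^{n/2}$ inside the thetas of \eqref{eq:m_even_n}.

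After completing the square in $k$, each inner sum over $k$ is, up to a monomial, the numerator of an Appell--Lerch sum $m(\pm q^{\binom n2-nr}x^n,q^{n^2},\zeta)$ for a $\zeta$ depending on $z$, $t$ and $j$. Reindexing the double sum over $(t,j)$ diagonally should collapse it to the single sum $\sum_{r=0}^{n-1}q^{-\binom{r+1}{2}}(-x)^r m(\cdots)$. What remains is a sum of ratios $\Theta(\zeta;q^{n^2})/\Theta(z;q)$ and cross terms. Using \eqref{eq:theta3}-type product splittings, I would identify these with the second line of \eqref{eq:m_odd_n} (respectively \eqref{eq:m_even_n}) evaluated at that special $z'$.

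\emph{Stage two (general $z'$).} For general $z'$, apply Theorem~\ref{thm:thm3p3} with base $q^{n^2}$ to each $m(\cdot,q^{n^2},\zeta)-m(\cdot,q^{n^2},z')$. This adds a theta quotient in $z'$ to the theta part, and I must show the new theta part equals the stated sum.

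\emph{Main obstacle.} Stage two is a pure theta-function identity and is the main obstacle. To handle it, I would regard both theta parts as functions of $z$ (or of $z'$) and argue as follows.
\begin{enumerate}
\item Check via \eqref{eq:theta1}--\eqref{eq:theta2} that both sides transform identically under $z\mapsto qz$ (resp.\ $z'\mapsto q^{n^2}z'$).
\item Note that both sides have at most simple poles, located at $xz\in q^{\mathbb Z}$, at $q^rz\in q^{n\mathbb Z}$, and at the zeros of $\Theta(x^nz';q^n)$.
\item Compare residues at these poles.
\end{enumerate}
The difference of the two sides is then a holomorphic theta function of too small degree, hence zero. The residue comparison is where I would spend the care, matching residues for each $r$ separately.
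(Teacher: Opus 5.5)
Quoting \cite[Corollary~3.6]{hickerson2014hecke} is exactly what the paper does. It gives no proof of its own, and the citation suffices. (In that reference the two cases are the parity specializations of a general-$n$ formula containing $(-x)^n$ and $\Theta(-q^{\binom{n}{2}}(-x)^nz';q^n)$, with $q^{\binom{n}{2}}$ moved out of the $\Theta(\cdot\,;q^n)$ factors by \eqref{eq:theta2}.) The reproof you sketch, however, would not go through as written.

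In Stage one nothing collapses and nothing is left over. Take $n$ odd; even $n$ only adds the signs of \eqref{eq:m_even_n}. The expansion $1/(1-w)=\sum_j w^j/(1-w^n)$ with $r=nk+t$ gives the exact identity
\[
m(x,q,z)=\frac{1}{\Theta(z;q)}\sum_{j=0}^{n-1}\sum_{t=0}^{n-1}(-1)^t q^{\binom{t}{2}+(t-1)j}x^j z^{t+j}\,\Theta(\zeta_{t+j};q^{n^2})\,m\bigl(q^{\binom{n}{2}-nj}x^n,q^{n^2},\zeta_{t+j}\bigr),
\]
where $\zeta_s:=q^{\binom{n}{2}+ns}z^n$. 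The first argument depends only on $j$ and the third only on $t+j \bmod n$. So these $n^2$ Appell--Lerch sums are genuinely different, and no reindexing merges them. They reduce to the $n$ sums of \eqref{eq:m_odd_n} only after every $\zeta_s$ is moved to the common $z'$ with Theorem~\ref{thm:thm3p3}. The coefficient of $m(q^{\binom{n}{2}-nj}x^n,q^{n^2},z')$ is then $(xz/q)^j\,\Theta(q^jz;q)/\Theta(z;q)=q^{-\binom{j+1}{2}}(-x)^j$. This uses the $n$-dissection of the triple product, summed over $t$ with $j$ fixed. So your two stages are one step, and no choice of $z'$ makes the theta part simple. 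The real content is that an $n^2$-term theta sum equals the $n$-term sum in \eqref{eq:m_odd_n}; its denominators are $\Theta(z;q)$, $\Theta(q^{\binom{n}{2}-nj}x^nz';q^{n^2})$ and $\Theta(q^{n(n-1)+nt}x^nz^n;q^{n^2})$.

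Your closing argument does not prove that identity. Both theta parts are invariant under $z\mapsto qz$ and under $z'\mapsto q^{n^2}z'$, i.e.\ they are elliptic of degree zero. Equal principal parts therefore only make the difference \emph{constant} in $z$ (or $z'$); ``too small degree, hence zero'' is false in degree zero.

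The pole list also mixes variables:
\begin{itemize}
\item Zeros of $\Theta(x^nz';q^n)$ (for even $n$, $\Theta(-q^{n/2}x^nz';q^n)$) are poles in $z'$, not in $z$.
\item The $n^2$-term expression has apparent poles at $xz\in\omega q^{\mathbb{Z}}$ for all $n$th roots of unity $\omega\ne1$, and these must be shown to cancel.
\item At $xz\in q^{\mathbb{Z}}$ all $n$ values of $r$ contribute, so residues there cannot be matched term by term.
\end{itemize}

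What closes the argument is the variable $x$. Write $c_r(x)=q^{-\binom{r+1}{2}}(-x)^r$ and $X_r(x)$ for the first arguments. Then $X_r(qx)=X_{r-1}(x)$, $X_0(qx)=q^{n^2}X_{n-1}(x)$, $c_r(qx)=-xc_{r-1}(x)$ and $-xc_{n-1}(x)=-X_{n-1}(x)$. Hence \eqref{eq:m_qx_relation} gives $F(qx)=-xF(x)$ for $F:=m(x,q,z)-\sum_r c_r(x)\,m(X_r(x),q^{n^2},z')$, and \eqref{eq:theta1}--\eqref{eq:theta2} give the same relation for the stated theta part. Now match principal parts in both $z$ and $z'$. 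The difference $G$ then depends on $x$ alone, so it has no poles on $\C^\ast$, because its possible poles move with $z$ and $z'$. It also satisfies $G(qx)=-xG(x)$. Its Laurent coefficients therefore obey $a_k=-q^{-k}a_{k-1}$, which forces $G\equiv 0$.
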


\section{The Method}  
\label{sec:meth}

Hickerson and Mortenson \cite{hickerson2014hecke} showed how to use properties of Appell--Lerch sums to prove identities for mock theta functions. We use this idea to derive and prove $m$-dissection identities for mock theta functions. In this process, we also prove identities involving certain theta functions. The first author wrote a MAPLE package \texttt{thetaids}, which implements an algorithm from the theory of modular functions to prove theta function identities. This algorithm is described in a book with Frye \cite{frye2019automatic}. This method was recently used by Mortenson \cite{mortenson2024ramanujan} to prove new mock theta function identities analogous to Ramanujan's 10th order identities.

\subsection{An example}
\label{subsec:eg}

As an example, we show how to derive and prove the $2$-dissection identity in Theorem~\ref{thm:ord2diss2} for the second order mock theta function $A_2(q)$. From \eqref{A2}, we have
$$
A_2(q) = -m(q,q^4,q^2).
$$

We apply Corollary~\ref{cor:cor3p6} with $q\mapsto q^4$, $x=q$, $z=q^2$, $z'=-1$, and $n=2$:
\begin{align*}
A_2(q) &= -m\left(-q^{6}, q^{16}, -1\right)
   +\dfrac{1}{q^3} m\left(-\frac{1}{q^{2}}, q^{16}, -1\right)
   +\dfrac{\Theta \left(q^{8}, q^{24}\right)^{3} \Theta \left(q^{8}, q^{8}\right) \Theta \left(-q^{4}, q^{16}\right)}{\Theta \left(q^{3}, q^{4}\right) \Theta \left(-1, q^{16}\right) \Theta \left(q^{6}, q^{8}\right) \Theta \left(q^{2}, q^{8}\right)}\\
&\quad -q^3\,\dfrac{\Theta \left(q^{8}, q^{24}\right)^{3} \Theta \left(q^{12}, q^{8}\right) \Theta \left(-q^{12}, q^{16}\right)}{\Theta \left(q^{3}, q^{4}\right) \Theta \left(-1, q^{16}\right) \Theta \left(q^{6}, q^{8}\right)^{2}}
\\
&= -\frac{1}{q} m\left(-q^{2}, q^{16}, -1\right)
   -m\left(-q^{6}, q^{16}, -1\right)
   +\frac{\Theta_{4}^{2} \Theta_{8}^{2} \Theta_{16}}
           {2 q \Theta_{1} \Theta_{2} \Theta_{32}^{2}},
\end{align*}
where the last equality follows by applying \eqref{eq:m_inversion} from Proposition~\ref{propo:prop3p1} after some simplification.

Next, we compare the two Appell--Lerch series above with those listed in Section 2. We obtain
$$
\frac{1}{2} S_{1,8}(q^2) - \frac{1}{2} S_{0,8}(q^2)
= -\frac{1}{q} m(-q^2, q^{16}, -1) - m(-q^6, q^{16}, -1)
+ \dfrac{1}{2q}\,\dfrac{\overline{\Theta}_{6,16}\,\Theta_{4,16}^2}
       {\Theta_{2,16}^2} - 
q^2\,\dfrac{\overline{\Theta}_{2,16}\Theta_{4,16}^2}
     {2\, \Theta_{6,16}^2}.
$$

Therefore,
\begin{align*}
A_2(q) = 
\frac{1}{2} S_{1,8}(q^2) - \frac{1}{2} S_{0,8}(q^2)
+ \dfrac{1}{2q}\dfrac{\Theta_4^2\,\Theta_8^2\,\Theta_{16}}{\Theta_1\,\Theta_2\,\Theta_{32}^2}
- \dfrac{1}{2q}\dfrac{\overline{\Theta}_{6,16}\,\Theta^2_{4,16}}{\Theta^2_{2,16}}
+ \dfrac{q^2}{2}\dfrac{\overline{\Theta}_{2,16}\,\Theta^2_{4,16}}{\Theta^2_{6,16}}.
\end{align*}

Using computations with the first author's {\tt qseries} package \cite{Ga1999b}, it appears that the sum of the theta functions above can be written as an eta-product. This leads us to conjecture
\begin{equation}
\label{eq:thetaid1}
\dfrac{1}{2q}\dfrac{\Theta_4^2\,\Theta_8^2\,\Theta_{16}}{\Theta_1\,\Theta_2\,\Theta_{32}^2}
- \dfrac{1}{2q}\dfrac{\overline{\Theta}_{6,16}\,\Theta^2_{4,16}}{\Theta^2_{2,16}}
+ \dfrac{q^2}{2}\dfrac{\overline{\Theta}_{2,16}\,\Theta^2_{4,16}}{\Theta^2_{6,16}}
=
\dfrac{\Theta_{4}^{9}}{2\, \Theta_{1} \Theta_{2}^{3} \Theta_{8}^{4}}.
\end{equation}

If we assume this identity, then \eqref{eq:A2diss2} follows from the $2$-dissection of $1/\Theta_1$ given in Lemma~\ref{lem:etadiss2}. Therefore, it remains to prove \eqref{eq:thetaid1}.

This identity can be reduced to a finite computation using the theory of modular functions. The idea is to rewrite \eqref{eq:thetaid1} as an identity involving generalized eta-products and then apply an algorithm developed by the first author and Frye \cite{frye2019automatic}. This algorithm, described in Appendix \ref{sec:algorithm}, consists of six steps, which we now illustrate. The calculations in these steps can be carried out using the first author's Maple package {\tt thetaids}.

\medskip\noindent
{\bf $\theta$-Step 1.}
We first rewrite \eqref{eq:thetaid1} as
\[
0 = 1 - f_1(\tau) + f_2(\tau) - f_3(\tau),
\]
where
\begin{align*}
f_1(\tau) &:=
\frac{
\Theta_{1}
\Theta_{2}
\Theta_{16}
\overline{\Theta}_{6,16}
\Theta_{32}^{2}
}
{
\Theta_{2,16}^{2}
\Theta_{8}^{4}
},
\\
f_2(\tau) &:=
q^{3}
\frac{
\Theta_{1}
\Theta_{2}
\Theta_{16}
\overline{\Theta}_{2,16}
\Theta_{32}^{2}
}
{
\Theta_{6,16}^{2}
\Theta_{8}^{4}
},
\\
f_3(\tau) &:=
q
\frac{
\Theta_{4}^{7}
\Theta_{32}^{2}
}
{
\Theta_{2}^{2}
\Theta_{8}^{6}
\Theta_{16}
},
\end{align*}
and $q=e^{2\pi i\tau}$. Let
\[
g(\tau) := 1 - f_1(\tau) + f_2(\tau) - f_3(\tau).
\]
Thus, the identity is equivalent to
\[
g(\tau) \equiv 0.
\]

\medskip\noindent
{\bf $\theta$-Step 2.}
We check that each function $f_j(\tau)$ is a modular function on $\Gamma_1(32)$.

\medskip\noindent
{\bf $\theta$-Step 3.}
We find a complete set $\mathcal{S}_{32}$ of inequivalent cusps of $\Gamma_1(32)$ and their cusp widths.

\medskip\noindent
{\bf $\theta$-Step 4.}
We compute the invariant order of each $f_j(\tau)$ at every cusp
$\zeta\in\mathcal{S}_{32}$. This gives the orders
\[
\operatorname{ORD}(f_j,\zeta,\Gamma_1(32)).
\]

\medskip\noindent
{\bf $\theta$-Step 5.}
Using these values, we compute
\[
B=
\sum_{\substack{\zeta\in\mathcal{S}_{32}\\ \zeta\neq\infty}}
\min\Big(
\big\{\operatorname{ORD}(f_j,\zeta,\Gamma_1(32)) : 1\le j\le 3\big\}
\cup\{0\}
\Big) = -24.
\]

\medskip\noindent
{\bf $\theta$-Step 6.}
It follows that the identity holds if and only if
\[
\operatorname{ORD}(g,\infty,\Gamma_1(32))>-B = 24.
\]
Equivalently, we need to check that the $q$-expansion of $g(\tau)$
vanishes through $O(q^{24})$. A Maple computation confirms this
through $O(q^{88})$. Therefore, $g(\tau)\equiv 0$.

This completes the proof.

\subsection{A Strategy for Finding and Proving $m$-Dissections}
\label{subsec:strat}

In this section, we describe a strategy for obtaining the 
$m$-dissection of a mock theta function.

\medskip\noindent
\textbf{$m$-Step 1.}\label{step:m1}
Given a mock theta function, first write it in terms of Appell--Lerch sums 
and possibly theta quotients using Hickerson and Mortenson's catalog
from Section~\subsect{HMcat}.

\medskip\noindent
\textbf{$m$-Step 2.}\label{step:m2}
Apply Corollary~\corol{cor3p6} with $n=m$.  
This expresses the mock theta function in terms of
Appell--Lerch sums of the form $m(x,q^{m^2},z)$ together with theta functions.

\medskip\noindent
\textbf{$m$-Step 3.}\label{step:m3}
Apply Proposition~\propo{prop3p1} to simplify the resulting Appell--Lerch sums,
and use \eqref{eq:theta1}--\eqref{eq:theta2} to simplify the theta functions.

\medskip\noindent
\textbf{$m$-Step 4.}\label{step:m4}
Try to match the resulting Appell--Lerch sums with known mock theta functions
in Hickerson and Mortenson's catalog \cite{hickerson2014hecke},
using Theorem~\thm{thm3p3} if needed.

\medskip\noindent
\textbf{$m$-Step 5.}\label{step:m5}
If successful, this gives an expression for the mock theta function
in terms of other mock theta functions evaluated at $q^m$,
together with a sum of theta quotients.

\medskip\noindent
\textbf{$m$-Step 6.}\label{step:m6}
If possible, determine the $m$-dissection of this sum of theta quotients
in terms of theta functions and eta-products. The functions
{\tt sift}, {\tt jacprodmake}, and {\tt qetamake} from the first
author's {\tt qseries} package are useful for this step.

\medskip\noindent
\textbf{$m$-Step 7.}\label{step:m7}
Prove the theta identity obtained in Step~\ref{step:m6} using
the algorithm described in Appendix \ref{sec:algorithm} and Section \ref{subsec:eg}.

\begin{remark}
It is not always possible in $m$-Step 4 to identify the Appell--Lerch sums
as known mock theta functions. For example, when trying to find the $2$- and
$3$-dissections of $S_{0,8}(q)$, $S_{1,8}(q)$, $T_{0,8}(q)$, and $T_{1,8}(q)$,
we were unable to identify suitable mock theta function representations for
the corresponding Appell--Lerch sums. We leave this as an open problem for
interested readers.
\end{remark}

\medskip

To illustrate this method, we now derive the $3$-dissection of the 
second-order mock theta function $B_2(q)$, namely \eqnref{B2diss3}.

\begin{proof}[Proof of \eqnref{B2diss3}]\hfill

\medskip\noindent
\textbf{$m$-Step 1.}
From \eqref{B2}, we have 
\[
B_2(q)  = -\frac{m(1,q^4,q^3)}{q}.
\]

\medskip\noindent
\textbf{$m$-Step 2.}
Apply Corollary~\corol{cor3p6} with $x=1$, $q \to q^4$, $z=q^3$, $n=3$, and $z'=-1$ to obtain
\begin{align*}
    B_2(q) &= -\dfrac{m(q^{12},q^{36},-1)}{q} 
    + \dfrac{m(1,q^{36},-1)}{q^5} 
    - \dfrac{m(q^{-12},q^{36},-1)}{q^{13}} \\
    &\quad - \dfrac{\Theta_{12}^3\overline{\Theta}_{5,12}\overline{\Theta}_{15,36}}
    {q^5\Theta_{1,4}\Theta_{5,12}\overline{\Theta}_{0,36}\overline{\Theta}_{0,12}} 
    + \dfrac{\Theta_{12}^3\overline{\Theta}_{3,12}\overline{\Theta}_{9,36}}
    {q^4\Theta_{1,4}\Theta_{3,12}\overline{\Theta}_{0,36}\overline{\Theta}_{0,12}} \\
    &\quad + \dfrac{\Theta_{12}^3\overline{\Theta}_{1,12}\overline{\Theta}_{3,36}}
    {q^2\Theta_{1,4}\Theta_{1,12}\overline{\Theta}_{0,36}\overline{\Theta}_{0,12}}.
\end{align*}

\medskip\noindent
\textbf{$m$-Step 3.}
Apply \eqref{eq:m_inversion} of Proposition~\propo{prop3p1} 
to the third term to 
obtain
\begin{align*}
    B_2(q) &= -\dfrac{2}{q}m(q^{12},q^{36},-1) 
    + \dfrac{m(1,q^{36},-1)}{q^5} \\
    &\quad - \dfrac{\Theta_{12}^3\overline{\Theta}_{5,12}\overline{\Theta}_{15,36}}
    {q^5\Theta_{1,4}\Theta_{5,12}\overline{\Theta}_{0,36}\overline{\Theta}_{0,12}} 
    + \dfrac{\Theta_{12}^3\overline{\Theta}_{3,12}\overline{\Theta}_{9,36}}
    {q^4\Theta_{1,4}\Theta_{3,12}\overline{\Theta}_{0,36}\overline{\Theta}_{0,12}} \\
    &\quad + \dfrac{\Theta_{12}^3\overline{\Theta}_{1,12}\overline{\Theta}_{3,36}}
    {q^2\Theta_{1,4}\Theta_{1,12}\overline{\Theta}_{0,36}\overline{\Theta}_{0,12}}.
\end{align*}

\medskip\noindent
\textbf{$m$-Step 4.}
Apply Theorem~\thm{thm3p3}                   to the second term with $x=1$,  $z_1=-1$, $z_0=-q^{12}$ and $q \to q^{36}$ to obtain
\begin{align}
    B_2(q) &= -\dfrac{2}{q}m(q^{12},q^{36},-1)  
    + \dfrac{m(1,q^{36},-q^{12})}{q^5} 
    + \dfrac{\Theta_{36}^3\Theta_{12,36}^2}
    {q^5\overline{\Theta}_{0,36}^2\overline{\Theta}_{12,36}^2} \nonumber \\
    &\quad - \dfrac{\Theta_{12}^3\overline{\Theta}_{5,12}\overline{\Theta}_{15,36}}
    {q^5\Theta_{1,4}\Theta_{5,12}\overline{\Theta}_{0,36}\overline{\Theta}_{0,12}}
    + \dfrac{\Theta_{12}^3\overline{\Theta}_{3,12}\overline{\Theta}_{9,36}}
    {q^4\Theta_{1,4}\Theta_{3,12}\overline{\Theta}_{0,36}\overline{\Theta}_{0,12}} \nonumber \\
    &\quad + \dfrac{\Theta_{12}^3\overline{\Theta}_{1,12}\overline{\Theta}_{3,36}}
    {q^2\Theta_{1,4}\Theta_{1,12}\overline{\Theta}_{0,36}\overline{\Theta}_{0,12}}.
    \label{e0.01}
\end{align}

\medskip\noindent
\textbf{$m$-Step 5.}
Use \eqref{phi6} and \eqref{psi6},
\[
\phi_6(q) = 2m(q,q^3,-1), 
\qquad
\psi_6(q) = m(1,q^3,-q),
\]
and substitute into \eqref{e0.01} to obtain
\begin{align*}
    B_2(q) &= -\dfrac{\phi_6(q^{12})}{q} 
     +\dfrac{\psi_6(q^{12})}{q^5} 
    + \Psi(q),
\end{align*}
where
\begin{align*}
\Psi(q) &=
     \dfrac{\Theta_{36}^3\Theta_{12,36}^2}
    {q^5\overline{\Theta}_{0,36}^2\overline{\Theta}_{12,36}^2} 
    - \dfrac{\Theta_{12}^3\overline{\Theta}_{5,12}\overline{\Theta}_{15,36}}
    {q^5\Theta_{1,4}\Theta_{5,12}\overline{\Theta}_{0,36}\overline{\Theta}_{0,12}} \\
    &\quad + \dfrac{\Theta_{12}^3\overline{\Theta}_{3,12}\overline{\Theta}_{9,36}}
    {q^4\Theta_{1,4}\Theta_{3,12}\overline{\Theta}_{0,36}\overline{\Theta}_{0,12}} 
    + \dfrac{\Theta_{12}^3\overline{\Theta}_{1,12}\overline{\Theta}_{3,36}}
    {q^2\Theta_{1,4}\Theta_{1,12}\overline{\Theta}_{0,36}\overline{\Theta}_{0,12}}.
\end{align*}

\medskip\noindent
\textbf{$m$-Step 6.}
Determine the $3$-dissection of the theta function $\Psi(q)$. 
The elements of the $3$-dissection are {\it not} nice products. However after some experimentation
using the {\tt qseries} package we conjecture that
\begin{align}
\Psi(q) &=\dfrac{\Theta_6^7 \Theta_9^2}{\Theta_3^6 \Theta_{12} \Theta_{18}}
+\dfrac{\Theta_6^8 \Theta_{18}^2}{q\,\Theta_3^4 \Theta_{24}^4 \Theta_{36}}
-\dfrac{\Theta_9 \Theta_{12}^5 \Theta_{18}^3}{4 q^5 \Theta_3^9 \Theta_6^5 \Theta_{24}^2 \Theta_{36}^2 \Theta_{72}} \nonumber \\
&+\dfrac{\Theta_{12}^4 \Theta_{36}}{4 q\,\Theta_{24}^9 \Theta_{72}^2}
+\dfrac{3\,\Theta_9^7 \Theta_{12}^2 \Theta_{36}^4}{4 q^2 \Theta_3^6 \Theta_{18}^9 \Theta_{24}^2 \Theta_{72}^2}.
\label{eq:conj:Psiq}
\end{align}

\medskip\noindent
\textbf{$m$-Step 7.}
Prove \eqn{conj:Psiq} by following $\theta$-Step 1 -- $\theta$-Step 7.
We find that \eqn{conj:Psiq} can be written as an identity for generalized
eta-products on $\Gamma_1(72)$ and that $B=-204$. We verify that the first 204 terms
of the $q$-expansion of both sides agree and check to $O(q^{348})$.
This proves \eqn{conj:Psiq} and we obtain the desired
$3$-dissection of $B_2(q)$.
\end{proof}

\section{$2$-Dissection Proofs}
\label{sec:2disspf}

\subsection{Proof of Theorem \thm{ord2diss2}}

\begin{proof}[Proof of \eqn{A2diss2}]
The proof of the $2$-dissection of $A_2(q)$ is given in Section~\subsect{eg}.
\end{proof}

\begin{proof}[Proof of \eqn{B2diss2}]
Following the strategy described in Section~\subsect{strat}, we derive the desired identity.

At $m$-Step~6, we obtain
$$
B_2(q) = -\frac{U_{0,8}(q^4)}{q^2} + \Psi_{B_2,2}(q),
$$
where
$$
\Psi_{B_2,2}(q) = 
\frac{
\Theta_{2}\Theta_{8}^{4}\Theta_{16}
\left(
\Theta_{1,8}^{2}\overline{\Theta}_{6,16}
+
\Theta_{3,8}^{2}\overline{\Theta}_{2,16}
\right)}
{
2q\Theta_{1}\Theta_{4}^{3}\Theta_{32}^{2}
\Theta_{3,8}\Theta_{1,8}
}.
$$

At $m$-Step~6, we are led to conjecture that
\beq
\Psi_{B_2,2}(q)=
\frac{\Theta_{4}^{5}}{\Theta_{2}^{4}}
+
\frac{\Theta_{8}^{6}\Theta_{16}}
{q\Theta_{2}^{2}\Theta_{4}^{2}\Theta_{32}^{2}}.
\label{eq:conj:PsiB22}
\eeq

We rewrite this as an identity involving generalized eta-products on
$\Gamma_1(32)$ and find that $B=-24$.
We verify that the first $24$ terms of the $q$-expansion on both sides agree,
and additionally check the identity up to $O(q^{88})$.
This proves \eqn{conj:PsiB22}, which gives the desired
$2$-dissection of $B_2(q)$.
\end{proof}

\begin{proof}[Proof of \eqn{mu2diss2}]
We apply the strategy described in Section~\subsect{strat}.

At $m$-Step~5, we obtain
$$
\mu_2(q) =
-\frac{U_{0,8}(q^4)}{q^2}
+
\Psi_{\mu_2,2}(q),
$$
where
$$
\Psi_{\mu_2,2}(q) =
-\frac{2 \Theta_{4}^{2} \Theta_{8}^{3}}
{\Theta_{1} \Theta_{2} \Theta_{16}^{2}}
+
\frac{2 \Theta_{4}^{4} \Theta_{16}^{8}}
{q \Theta_{1} \Theta_{2} \Theta_{8}^{5} \Theta_{32}^{4}}
-
\frac{\Theta_{2}^{8}}
{\Theta_{1}^{3} \Theta_{4}^{4}}
-
\frac{2 q^{2} \Theta_{4}^{2} \Theta_{16}^{2} \overline{\Theta}_{2,16}}
{\Theta_{8}^{2} \Theta_{6,16}^{2}}
-
\frac{2 \Theta_{4}^{2} \Theta_{16}^{2} \overline{\Theta}_{6,16}}
{q \Theta_{8}^{2} \Theta_{2,16}^{2}}.
$$

At $m$-Step~6, we are led to conjecture that
\beq
\Psi_{\mu_2,2}(q)=
-\frac{\Theta_{2}^{8}}
{\Theta_{1}^{3}\Theta_{4}^{4}}.
\label{eq:conj:Psimu22}
\eeq

We rewrite this as an identity involving generalized eta-products on
$\Gamma_1(32)$ and find that $B=-24$.
We verify that the first $24$ terms of the $q$-expansion on both sides agree,
and additionally check the identity up to $O(q^{88})$.

Now, substituting \eqn{theta-inv1-sq} and \eqn{eta1diss2} into \eqn{conj:Psimu22} yields the desired
$2$-dissection of $\mu_2(q)$.
\end{proof}

\subsection{Proof of Theorem \thm{ord6diss2}}

\begin{proof}[Proof of \eqn{rho6diss2}]
Using the strategy described in Section~\subsect{strat}, we derive the desired identity.

At $m$-Step~5, we obtain
$$
\rho_6(q) =
-\frac{U_{0,8}(q^6)}{q}
+
\Psi_{\rho_6,2}(q),
$$
where
$$
\Psi_{\rho_6,2}(q)=
\frac{
\Theta_{2}
\Theta_{3}
\Theta_{12}^{4}
\Theta_{24}
\left(
-q\Theta_{1,12}\Theta_{13,12}\overline{\Theta}_{14,24}
+
\Theta_{7,12}^{2}\overline{\Theta}_{2,24}
\right)}
{
2q\Theta_{1}
\Theta_{6}^{4}
\Theta_{48}^{2}
\Theta_{1,12}
\Theta_{7,12}
}.
$$

At $m$-Step~6, we are led to conjecture that
\beq
\Psi_{\rho_6,2}(q)=
\frac{\Theta_{4}^{3}\Theta_{6}^{2}}
{\Theta_{2}^{3}\Theta_{12}}
+
\frac{\Theta_{4}\Theta_{8}^{2}\Theta_{12}^{3}}
{q\Theta_{2}^{2}\Theta_{6}\Theta_{16}\Theta_{48}}.
\label{eq:conj:Psirho62}
\eeq

We rewrite this as an identity involving generalized eta-products on
$\Gamma_1(48)$ and find that $B=-40$.
We verify that the first $40$ terms of the $q$-expansion on both sides agree,
and additionally check the identity up to $O(q^{136})$.
This proves \eqn{conj:Psirho62}, which establishes the desired
$2$-dissection of $\rho_6(q)$.
\end{proof}

\begin{proof}[Proof of \eqn{lambda6diss2}]
We follow the framework described in Section~\subsect{strat}.

At $m$-Step~5, we obtain
$$
\lambda_6(q)=
\frac{\mu_2(q^6)}{q}
+
\Psi_{\lambda_6,2}(q),
$$
where
$$
\Psi_{\lambda_6,2}(q)=
-\frac{
\Theta_{2}^{2}
\Theta_{8}^{4}
\Theta_{12}^{9}}
{q
\Theta_{4}^{5}
\Theta_{6}^{5}
\Theta_{16}
\Theta_{24}^{2}
\Theta_{48}}
-\frac{
\Theta_{4}^{2}
\Theta_{12}^{2}
\Theta_{16}
\Theta_{24}^{5}}
{q
\Theta_{6}^{3}
\Theta_{8}^{3}
\Theta_{48}^{3}}
+\frac{
\Theta_{1}^{3}
\Theta_{6}^{2}}
{\Theta_{2}^{3}
\Theta_{3}}
+\frac{
\Theta_{12}^{8}}
{q
\Theta_{6}^{3}
\Theta_{24}^{4}}.
$$

At $m$-Step~6, we are led to conjecture that
\beq
\Psi_{\lambda_6,2}(q)=
\frac{\Theta_{4}^{3}\Theta_{6}^{2}}
{\Theta_{2}^{3}\Theta_{12}}
-
\frac{\Theta_{8}^{4}\Theta_{12}^{5}}
{q\Theta_{2}\Theta_{4}^{3}\Theta_{24}^{4}}.
\label{eq:conj:Psilambda62}
\eeq

We rewrite this as an identity involving generalized eta-products on
$\Gamma_1(48)$ and find that $B=-80$.
We verify that the first $80$ terms of the $q$-expansion on both sides agree,
and additionally check the identity up to $O(q^{176})$.
This proves \eqn{conj:Psilambda62}, which yields the desired
$2$-dissection of $\lambda_6(q)$.

Hence, we obtain the identity
\beq
\lambda_6(q)=
\frac{\mu_2(q^6)}{q}
-
\frac{
\Theta_{1}
\Theta_{3}
\Theta_{6}
\Theta_{8}^{2}
\Theta_{12}}
{q
\Theta_{2}^{2}
\Theta_{4}
\Theta_{24}^{2}}.
\label{eq:newlambda6id}
\eeq

Therefore, it remains to determine the $2$-dissection of
$\Theta_{1}\Theta_{3}$. We obtain
\beq
\Theta_{1}\Theta_{3}
=
\frac{
\Theta_{2}\Theta_{8}^{2}\Theta_{12}^{4}}
{\Theta_{4}^{2}\Theta_{6}\Theta_{24}^{2}}
-
\frac{
q\Theta_{4}^{4}\Theta_{6}\Theta_{24}^{2}}
{\Theta_{2}\Theta_{8}^{2}\Theta_{12}^{2}}.
\label{eq:E13adiss2}
\eeq

This identity can be proved using the $\theta$-Step Algorithm.
We rewrite it as an identity involving generalized eta-products on
$\Gamma_1(24)$ and find that $B=-8$.
We verify that the first $8$ terms of the $q$-expansion on both sides agree,
and additionally check the identity up to $O(q^{56})$.
This proves \eqn{E13adiss2}.

Substituting \eqn{E13adiss2} into \eqn{newlambda6id} yields the
$2$-dissection identity \eqn{lambda6diss2}.


\end{proof}

\begin{proof}[Proof of \eqn{sigma6diss2}]
In this case, we make a slight modification to the strategy described in Section~\subsect{strat}. We begin with
$$
\sigma_6(q) = -m(q^2,q^6,q).
$$

At $m$-Step~2, instead of applying Corollary~\corol{cor3p6}, we apply Theorem \ref{thm:thm3p3} to find
$$
\sigma_6(q) =
-m(q^2,q^6,-1)
+
\frac{\Theta_{2}^{4}\Theta_{6}^{5}}
{2\Theta_{1}^{2}\Theta_{3}^{2}\Theta_{4}^{2}\Theta_{12}^{2}}.
$$

Using the catalog in Section~\subsect{HMcat}, this gives
\beq
\sigma_6(q)=
-\frac{\phi_6(q^2)}{2}
+
\frac{\Theta_{2}^{4}\Theta_{6}^{5}}
{2\Theta_{1}^{2}\Theta_{3}^{2}\Theta_{4}^{2}\Theta_{12}^{2}}.
\label{eq:sig6id}
\eeq

Therefore, it remains to determine the $2$-dissection of
$(\Theta_{1}\Theta_{3})^{-1}$. We obtain
\beq
\frac{1}{\Theta_{1}\Theta_{3}}
=
\frac{
\Theta_{8}^{2}\Theta_{12}^{5}}
{\Theta_{2}^{2}\Theta_{4}\Theta_{6}^{4}\Theta_{24}^{2}}
+
\frac{
q\Theta_{4}^{5}\Theta_{24}^{2}}
{\Theta_{2}^{4}\Theta_{6}^{2}\Theta_{8}^{2}\Theta_{12}}.
\label{eq:E13diss2}
\eeq

This identity can be proved using the $\theta$-Step Algorithm.
We rewrite it as an identity involving generalized eta-products on
$\Gamma_1(24)$ and find that $B=-8$.
We verify that the first $8$ terms of the $q$-expansion on both sides agree,
and additionally check the identity up to $O(q^{56})$.
This proves \eqn{E13diss2}.

Substituting \eqn{E13diss2} into \eqn{sig6id} yields the
$2$-dissection of $\sigma_6(q)$.
\end{proof}

\begin{proof}[Proof of \eqn{mu6diss2}]
The argument is similar to that of \eqn{sigma6diss2}, with suitable modifications.

We begin with
$$
\mu_6(q)=
2m(q^2,q^6,-1)
-
\frac{\Theta_{1}^{2}\Theta_{3}^{2}}
{2\Theta_{2}^{2}\Theta_{6}}.
$$

Using the catalog in Section~\subsect{HMcat}, this gives
\beq
\mu_6(q)=
\phi_6(q^2)
-
\frac{\Theta_{1}^{2}\Theta_{3}^{2}}
{2\Theta_{2}^{2}\Theta_{6}}.
\label{eq:mu6id}
\eeq

Substituting \eqn{E13adiss2} into \eqn{mu6id} yields the
$2$-dissection identity \eqn{mu6diss2}.
\end{proof}

\subsection{Proof of Theorem \thm{ord8diss2}}

\begin{proof}[Proof of \eqn{V08diss2}]
We begin with
$$
V_{0,8}(q)
=
-2\frac{m(1,q^8,q)}{q}
-
\frac{\Theta_{2}^{3}\Theta_{4}}
{\Theta_{1}^{2}\Theta_{8}}.
$$

As in the proof of \eqn{sigma6diss2}, we apply Theorem~\thm{thm3p3} to obtain
$$
V_{0,8}(q)
=
-2\frac{m(1,q^8,q^6)}{q}
+
\frac{2\Theta_{4}^{2}\Theta_{8}\Theta_{3,8}}
{\Theta_{2}^{2}\Theta_{1,8}}
-
\frac{\Theta_{2}^{3}\Theta_{4}}
{\Theta_{1}^{2}\Theta_{8}}.
$$

Using the catalog in Section~\subsect{HMcat}, this gives
\beq
V_{0,8}(q)
=
2qB_2(q^2)
+
\frac{2\Theta_{4}^{2}\Theta_{8}\Theta_{3,8}}
{\Theta_{2}^{2}\Theta_{1,8}}
-
\frac{\Theta_{2}^{3}\Theta_{4}}
{\Theta_{1}^{2}\Theta_{8}}.
\label{eq:V08id}
\eeq

We are led to conjecture that
\beq
\frac{2\Theta_{4}^{2}\Theta_{8}\Theta_{3,8}}
{\Theta_{2}^{2}\Theta_{1,8}}
-
\frac{\Theta_{2}^{3}\Theta_{4}}
{\Theta_{1}^{2}\Theta_{8}}
=
\frac{\Theta_{4}^{8}}
{\Theta_{2}^{4}\Theta_{8}^{3}}.
\label{eq:V08etaid}
\eeq

This identity can be proved using the $\theta$-Step Algorithm.
We rewrite it as an identity involving generalized eta-products on
$\Gamma_1(8)$ and find that $B=-1$.
We verify that the first $2$ terms of the $q$-expansion on both sides agree,
and additionally check the identity up to $O(q^{17})$.
This proves \eqn{V08etaid}. Substituting this into \eqn{V08id} gives the
$2$-dissection identity \eqn{V08diss2}.
\end{proof}

\begin{proof}[Proof of \eqn{V18diss2}]
The proof is analogous to that of \eqn{V08diss2}, but the computation is shorter.
We obtain
\begin{align*}
V_{1,8}(q)
&= -m(q^2,q^8,q) \\
&= -m(q^2,q^8,q^4)
+ q\frac{\Theta_{8}^{3}}{\Theta_{2}\Theta_{4}} \\
&= A_2(q^2)
+ q\frac{\Theta_{8}^{3}}{\Theta_{2}\Theta_{4}},
\end{align*}
which immediately gives the $2$-dissection identity \eqn{V18diss2}.
\end{proof}
\section{$3$-Dissection Proofs}
\label{sec:3disspf}

\subsection{Proof of Theorem \thm{ord2diss3}}

\begin{proof}[Proof of \eqn{A2diss3}]
We begin with
$$
A_2(q) = -m(q,q^4,q^2).
$$

Applying Corollary~\corol{cor3p6} with $n=3$ and $z'=-1$, together with
\eqref{eq:m_inversion} from Proposition~\propo{prop3p1}, and simplifying, we obtain
\begin{align*}
A_2(q)
&=
-\frac{m(q^9,q^{36},-1)}{q}
-
m(q^{15},q^{36},-1)
+
\frac{m(q^3,q^{36},-1)}{q^2}
\\
&\qquad
+
\frac{
\Theta_{3}\Theta_{12}^{3}\Theta_{18}\overline{\Theta}_{5,12}}
{2q^{2}\Theta_{1}\Theta_{6}^{2}\Theta_{24}\Theta_{72}}
-
\frac{
\Theta_{2}\Theta_{12}^{4}\Theta_{36}^{6}}
{2q^{3}\Theta_{1}\Theta_{4}\Theta_{6}^{2}\Theta_{18}^{2}\Theta_{72}^{4}}
+
\frac{
\Theta_{3}\Theta_{12}^{3}\Theta_{18}\overline{\Theta}_{1,12}}
{2q\Theta_{1}\Theta_{6}^{2}\Theta_{24}\Theta_{72}}
\\
&=
-\frac{m(q^9,q^{36},-1)}{q}
-
m(q^{15},q^{36},-1)
+
\frac{m(q^3,q^{36},-1)}{q^2}
\\
&\qquad
-
\frac{
\Theta_{2}\Theta_{12}^{4}\Theta_{36}^{6}}
{2q^{3}\Theta_{1}\Theta_{4}\Theta_{6}^{2}\Theta_{18}^{2}\Theta_{72}^{4}}
+
\frac{
\Theta_{2}\Theta_{3}^{3}\Theta_{12}^{3}\Theta_{18}}
{2q^{2}\Theta_{1}^{2}\Theta_{6}^{3}\Theta_{24}\Theta_{72}},
\end{align*}
since
$$
q\overline{\Theta}_{1,12}
+
\overline{\Theta}_{5,12}
=
\overline{\Theta}_{1,3}
=
\frac{\Theta_{2}\Theta_{3}^{2}}
{\Theta_{1}\Theta_{6}},
$$
by \eqn{theta4} and \eqn{thetaeta1}.

Next, we apply Theorem~\thm{thm3p3} to the first Appell--Lerch sum to obtain
\begin{align*}
A_2(q)
&=
-\frac{m(q^9,q^{36},q^{18})}{q}
-
m(q^{15},q^{36},-1)
+
\frac{m(q^3,q^{36},-1)}{q^2}
\\
&\qquad
-
\frac{
\Theta_{36}^{9}}
{2q\Theta_{9}\Theta_{18}^{3}\Theta_{72}^{4}}
-
\frac{
\Theta_{2}\Theta_{12}^{4}\Theta_{36}^{6}}
{2q^{3}\Theta_{1}\Theta_{4}\Theta_{6}^{2}\Theta_{18}^{2}\Theta_{72}^{4}}
+
\frac{
\Theta_{2}\Theta_{3}^{3}\Theta_{12}^{3}\Theta_{18}}
{2q^{2}\Theta_{1}^{2}\Theta_{6}^{3}\Theta_{24}\Theta_{72}}.
\end{align*}

Next, we apply Corollary~\corol{cor3p6} with $n=2$, $z'=-1$, and $q\mapsto -q^3$ to
$\phi_6(q)=2m(q,q^3,-1)$, together with \eqref{eq:m_inversion} from
Proposition~\propo{prop3p1}. After simplifying, we obtain
\begin{align*}
\phi_6(-q^3)
&=
-\frac{2m(q^3,q^{36},-1)}{q^3}
+
2m(q^{15},q^{36},-1)
-
\frac{\Theta_{3}\Theta_{18}^{2}}
{\Theta_{6}\Theta_{36}}
+
\frac{
\Theta_{12}\Theta_{36}^{6}}
{q^{3}\Theta_{6}\Theta_{9}\Theta_{72}^{4}}.
\end{align*}

Combining these expressions gives
$$
A_2(q)
=
\frac{A_2(q^9)}{q}
-
\frac{1}{2}\phi_6(-q^3)
+
\Psi_{A_2,3}(q),
$$
where
$$
\Psi_{A_2,3}(q)=
-\frac{
\Theta_{36}^{9}}
{2q\Theta_{9}\Theta_{18}^{3}\Theta_{72}^{4}}
-
\frac{
\Theta_{2}\Theta_{12}^{4}\Theta_{36}^{6}}
{2q^{3}\Theta_{1}\Theta_{4}\Theta_{6}^{2}\Theta_{18}^{2}\Theta_{72}^{4}}
+
\frac{
\Theta_{2}\Theta_{3}^{3}\Theta_{12}^{3}\Theta_{18}}
{2q^{2}\Theta_{1}^{2}\Theta_{6}^{3}\Theta_{24}\Theta_{72}}
-
\frac{
\Theta_{3}\Theta_{18}^{2}}
{2\Theta_{6}\Theta_{36}}
+
\frac{
\Theta_{12}\Theta_{36}^{6}}
{2q^{3}\Theta_{6}\Theta_{9}\Theta_{72}^{4}}.
$$

We are led to conjecture that
\beq
\Psi_{A_2,3}(q)=
\frac{
\Theta_{6}^{11}\Theta_{18}^{2}}
{2\Theta_{3}^{7}\Theta_{12}^{4}\Theta_{36}}
+
\frac{
q\Theta_{6}^{4}\Theta_{9}^{2}\Theta_{12}}
{\Theta_{3}^{5}\Theta_{18}}
+
\frac{
2q^{2}\Theta_{6}^{3}\Theta_{12}\Theta_{18}^{2}}
{\Theta_{3}^{4}\Theta_{9}}.
\label{eq:conj:PsiA23}
\eeq

We rewrite this as an identity involving generalized eta-products on
$\Gamma_1(72)$ and calculate that $B=-228$.
We verify that the first $229$ terms of the $q$-expansion on both sides agree,
and additionally check the identity up to $O(q^{372})$.
This proves \eqn{conj:PsiA23}, which establishes \eqn{A2diss3},
the desired $3$-dissection of $A_2(q)$.
\end{proof}


\begin{proof}[Proof of \eqn{B2diss3}]
The proof of the $3$-dissection of $B_2(q)$ is given in Section~\subsect{strat}.
\end{proof}

\begin{proof}[Proof of \eqn{mu2diss3}]
We begin with
$$
\mu_2(q)=
4m(-q,q^4,-1)
-
\frac{\Theta_{2}^{8}}
{\Theta_{1}^{3}\Theta_{4}^{4}}.
$$

Applying Corollary~\corol{cor3p6} with $n=3$ and $z'=-1$, together with
\eqref{eq:m_inversion} from Proposition~\propo{prop3p1}, and simplifying, we obtain
\begin{align*}
\mu_2(q)
&=
-\frac{4m(-q^9,q^{36},-1)}{q}
+
4m(-q^{15},q^{36},-1)
+
\frac{4m(-q^3,q^{36},-1)}{q^3}
\\
&\qquad
-
\frac{
2\Theta_{2}\Theta_{6}\Theta_{12}\Theta_{24}\Theta_{36}
\overline{\Theta}_{5,12}}
{q^{3}\Theta_{1}\Theta_{3}\Theta_{8}\Theta_{72}^{2}}
+
\frac{
2\Theta_{2}\Theta_{6}\Theta_{12}\Theta_{24}\Theta_{36}
\overline{\Theta}_{1,12}}
{q^{2}\Theta_{1}\Theta_{3}\Theta_{8}\Theta_{72}^{2}}
-
\frac{\Theta_{2}^{8}}
{\Theta_{1}^{3}\Theta_{4}^{4}}.
\end{align*}

Next, we apply Corollary~\corol{cor3p6} with $n=2$, $z'=-1$, and
$q\mapsto q^3$ to $\phi_6(q)=2m(q,q^3,-1)$, together with
\eqref{eq:m_inversion} from Proposition~\propo{prop3p1}. After simplifying, we obtain
\begin{align*}
\phi_6(q^3)
&=
\dfrac{2m(-q^3,q^{36},-1)}{q^3}
+2m(-q^{15},q^{36},-1)
-\dfrac{2\Theta_{18,54}^3\overline{\Theta}_{3,18}}
{\overline{\Theta}_{3,9}\Theta_{3,18}\overline{\Theta}_{0,18}}
\\
&\quad
-\dfrac{2}{q^3}
\dfrac{\Theta_{18,54}^{3}\,\overline{\Theta}_{6,18}\,\overline{\Theta}_{18,36}}
{\overline{\Theta}_{3,9}\,\overline{\Theta}_{0,36}\,\Theta_{3,18}\,\overline{\Theta}_{9,18}}.
\end{align*}

Combining these expressions gives
\begin{align*}
\mu_2(q)
&=
2\phi_6(q^3)
- \frac{\mu_2(q^9)}{q}
+\Psi_{\mu_2,3}(q).
\end{align*}

where
\begin{align*}
\Psi_{\mu_2,3}(q)
&=\dfrac{4}{q^3}\Biggl(
   \dfrac{\Theta_{18}^3\,\overline{\Theta}_{6,18}\,\overline{\Theta}_{18,36}}
        {\Theta_{3,18}\,\overline{\Theta}_{3,9}\,\overline{\Theta}_{0,36}\,\overline{\Theta}_{9,18}}
  -\dfrac{\Theta_{12}^4\,\overline{\Theta}_{5,12}}
        {\Theta_{1,4}\,\Theta_{3,12}\,\overline{\Theta}_{0,36}\,\overline{\Theta}_{4,12}}
 \Biggr)
 +\dfrac{4}{q^2}\,
  \dfrac{\Theta_{12}^4\,\overline{\Theta}_{1,12}}
        {\Theta_{1,4}\,\Theta_{3,12}\,\overline{\Theta}_{0,36}\,\overline{\Theta}_{4,12}}
 \nonumber\\
&\quad
 +4\,\dfrac{\Theta_{18}^3\,\overline{\Theta}_{3,18}}
          {\Theta_{3,18}\,\overline{\Theta}_{3,9}\,\overline{\Theta}_{0,18}}
 -\frac{\Theta_{2}^{8}}
{\Theta_{1}^{3}\Theta_{4}^{4}}.
\end{align*}

We conjecture that
\beq
\Psi_{\mu_2,3}(q)
=
\dfrac{1}{q}\dfrac{\Theta_6^6\Theta_{18}^2}{\Theta_3\Theta_{12}^5\Theta_{36}} - \dfrac{\Theta_3^4\Theta_9\Theta_{36}}{\Theta_6\Theta_{12}^3\Theta_{18}}-q\dfrac{\Theta_6^7\Theta_{36}^2}{\Theta_3\Theta_{12}^6\Theta_{18}}.
\label{eq:conj:Psimu23}
\eeq

We rewrite this as an identity involving generalized eta-products on
$\Gamma_1(72)$ and compute that $B=-240$.
We verify that the first $241$ terms of the $q$-expansions on both sides agree,
and we also check the identity up to $O(q^{384})$.

This proves \eqn{conj:Psimu23}, which establishes \eqn{mu2diss3}, giving the required $3$-dissection of $\mu_2(q)$.
\end{proof}


\begin{proof}[Proof of \eqn{psi6diss3}]
We begin with
$$
\psi_6(q)=m(1,q^3,-q).
$$

Applying Corollary~\corol{cor3p6} with $n=3$ and $z'=-1$, together with
\eqref{eq:m_inversion} from Proposition~\propo{prop3p1}, and simplifying, we get
\begin{align*}
\psi_6(q)
&=2m(q^9,q^{27},-1)-\dfrac{m(1,q^{27},-1)}{q^3}
+\dfrac{\Theta_9^3\Theta_{1,9}\Theta_{3,27}}
{q\,\overline{\Theta}_{1,3}\overline{\Theta}_{0,27}\overline{\Theta}_{0,9}\overline{\Theta}_{1,9}}\\
&\quad
+\dfrac{\Theta_9^3\Theta_{4,9}\Theta_{12,27}}
{q^3\overline{\Theta}_{1,3}\overline{\Theta}_{0,27}\overline{\Theta}_{0,9}\overline{\Theta}_{4,9}}
+\dfrac{\Theta_9^3\Theta_{2,9}\Theta_{6,27}}
{q^2\,\overline{\Theta}_{1,3}\overline{\Theta}_{0,27}\overline{\Theta}_{0,9}\overline{\Theta}_{2,9}}.
\end{align*}

Next, we apply Theorem~\thm{thm3p3} to the second Appell--Lerch sum with $z_0=-q^9$, together with \eqn{theta1} and \eqn{theta2}, and simplify to obtain
\begin{align*}
\psi_6(q)
&=2m(q^9,q^{27},-1)-\dfrac{m(1,q^{27},-q^9)}{q^3}
-\dfrac{1}{q^3}
\dfrac{\Theta_{27,81}^3\,\Theta_{9,27}^2}
{\overline{\Theta}_{9,27}^2\,\overline{\Theta}_{1,27}^2}
\\
&\quad
-2\dfrac{\Theta_{27,81}^3\,\Theta_{1,27}\Theta_{9,27}}
{\overline{\Theta}_{1,27}^2\,\overline{\Theta}_{9,27}^2}
+\dfrac{1}{q}
\dfrac{\Theta_{9,27}^3\,\Theta_{1,9}\Theta_{3,27}}
{\overline{\Theta}_{1,3}\,\overline{\Theta}_{1,27}\,\overline{\Theta}_{1,9}\,\overline{\Theta}_{1,9}}
\nonumber\\
&\quad
+\dfrac{1}{q^3}
\dfrac{\Theta_{9,27}^3\,\Theta_{4,9}\Theta_{12,27}}
{\overline{\Theta}_{1,3}\,\overline{\Theta}_{1,27}\,\overline{\Theta}_{1,9}\,\overline{\Theta}_{4,9}}
+\dfrac{1}{q^2}
\dfrac{\Theta_{9,27}^3\,\Theta_{2,9}\Theta_{6,27}}
{\overline{\Theta}_{1,3}\,\overline{\Theta}_{1,27}\,\overline{\Theta}_{1,9}\,\overline{\Theta}_{2,9}}.
\end{align*}

Using the catalog in Section~\subsect{HMcat}, we obtain
\begin{align*}
\psi_6(q)
=
\phi_6(q^9)-\dfrac{\psi_6(q^9)}{q^3}
-\Psi_{\psi_6,3}(q).
\end{align*}

where
\begin{align*}
\Psi_{\psi_6,3}(q)
&=
-\dfrac{1}{q^3}
\dfrac{\Theta_{27,81}^3\,\Theta_{9,27}^2}
{\overline{\Theta}_{9,27}^2\,\overline{\Theta}_{1,27}^2}
-2\dfrac{\Theta_{27,81}^3\,\Theta_{1,27}\Theta_{9,27}}
{\overline{\Theta}_{1,27}^2\,\overline{\Theta}_{9,27}^2}
+\dfrac{1}{q}
\dfrac{\Theta_{9,27}^3\,\Theta_{1,9}\Theta_{3,27}}
{\overline{\Theta}_{1,3}\,\overline{\Theta}_{1,27}\,\overline{\Theta}_{1,9}\,\overline{\Theta}_{1,9}}
\nonumber\\
&\quad
+\dfrac{1}{q^3}
\dfrac{\Theta_{9,27}^3\,\Theta_{4,9}\Theta_{12,27}}
{\overline{\Theta}_{1,3}\,\overline{\Theta}_{1,27}\,\overline{\Theta}_{1,9}\,\overline{\Theta}_{4,9}}
+\dfrac{1}{q^2}
\dfrac{\Theta_{9,27}^3\,\Theta_{2,9}\Theta_{6,27}}
{\overline{\Theta}_{1,3}\,\overline{\Theta}_{1,27}\,\overline{\Theta}_{1,9}\,\overline{\Theta}_{2,9}}.
\end{align*}

We conjecture that
\beq
\Psi_{\psi_6,3}(q)=
\dfrac{\Theta_6^7\,\Theta_9^7}{2\,\Theta_3^8\,\Theta_{18}^5}
+q\,\dfrac{\Theta_6^6\,\Theta_9^4}{\Theta_3^7\,\Theta_{18}^2}
+2q^2\,\dfrac{\Theta_6^5\,\Theta_9\,\Theta_{18}}{\Theta_3^6}.
\label{eq:conj:Psipsi63}
\eeq

We rewrite this as an identity involving generalized eta-products on
$\Gamma_1(54)$ and compute that $B=-99$.
We verify that the first $100$ terms of the $q$-expansions on both sides agree, and we also check the identity up to $O(q^{207})$.

This proves \eqn{conj:Psipsi63}, which establishes \eqn{psi6diss3}, giving the required $3$-dissection of $\psi_6(q)$.
\end{proof}

\begin{proof}[Proof of \eqn{rho6diss3}]
We begin with
$$
\rho_6(q)=-\dfrac{m(1,q^6,q)}{q}.
$$

Applying Corollary~\corol{cor3p6} with $n=3$ and $z'=-1$, together with
\eqref{eq:m_inversion} from Proposition~\propo{prop3p1}, and simplifying, we get
\begin{align*}
\rho_6(q)
&=-\dfrac{m(q^{18},q^{54},-1)}{q}
+\dfrac{m(1,q^{54},-1)}{q^7}
+\dfrac{\Theta_{18,54}^3\,\overline{\Theta}_{1,18}\,\overline{\Theta}_{3,54}}
{q^2\,\Theta_{1,6}\,\overline{\Theta}_{0,54}\,\overline{\Theta}_{0,18}\,\Theta_{1,18}}
\\
&\quad
-\dfrac{\Theta_{18,54}^3\,\overline{\Theta}_{7,18}\,\overline{\Theta}_{21,54}}
{q^7\,\Theta_{1,6}\,\overline{\Theta}_{0,54}\,\overline{\Theta}_{0,18}\,\Theta_{7,18}}
+\dfrac{\Theta_{18,54}^3\,\overline{\Theta}_{5,18}\,\overline{\Theta}_{15,54}}
{q^6\,\Theta_{1,6}\,\overline{\Theta}_{0,54}\,\overline{\Theta}_{0,18}\,\Theta_{5,18}}.
\end{align*}

Next, we apply Theorem~\thm{thm3p3} to the second Appell--Lerch sum with $z_0=-q^{18}$, together with \eqn{theta1} and \eqn{theta2}, and simplify to obtain
\begin{align*}
\rho_6(q)
&=\dfrac{m(1,q^{54},-q^{18})}{q^7}
-\dfrac{2m(q^{18},q^{54},-1)}{q}
+\dfrac{\Theta_{54,162}^3\,\Theta_{18,54}^2}
{q^7\,\overline{\Theta}_{18,54}^2\,\overline{\Theta}_{1,54}^2}
\\
&\quad
+\dfrac{\Theta_{18,54}^3\,\overline{\Theta}_{1,18}\,\overline{\Theta}_{3,54}}
{q^2\,\Theta_{1,6}\,\overline{\Theta}_{0,54}\,\overline{\Theta}_{0,18}\,\Theta_{1,18}}
\nonumber\\
&\quad
-\dfrac{\Theta_{18,54}^3\,\overline{\Theta}_{7,18}\,\overline{\Theta}_{21,54}}
{q^7\,\Theta_{1,6}\,\overline{\Theta}_{0,54}\,\overline{\Theta}_{0,18}\,\Theta_{7,18}}
+\dfrac{\Theta_{18,54}^3\,\overline{\Theta}_{5,18}\,\overline{\Theta}_{15,54}}
{q^6\,\Theta_{1,6}\,\overline{\Theta}_{0,54}\,\overline{\Theta}_{0,18}\,\Theta_{5,18}}.
\end{align*}

Using the catalog in Section~\subsect{HMcat}, we obtain
\begin{align*}
\rho_6(q)
=
\dfrac{\psi_6(q^{18})}{q^7}
-\dfrac{\phi_6(q^{18})}{q}
+\Psi_{\rho_6,3}(q).
\end{align*}

where
\begin{align*}
\Psi_{\rho_6,3}(q)
&=
\dfrac{\Theta_{54,162}^3\,\Theta_{18,54}^2}
{q^7\,\overline{\Theta}_{18,54}^2\,\overline{\Theta}_{1,54}^2}
+\dfrac{\Theta_{18,54}^3\,\overline{\Theta}_{1,18}\,\overline{\Theta}_{3,54}}
{q^2\,\Theta_{1,6}\,\overline{\Theta}_{0,54}\,\overline{\Theta}_{0,18}\,\Theta_{1,18}}
\nonumber\\
&\quad
-\dfrac{\Theta_{18,54}^3\,\overline{\Theta}_{7,18}\,\overline{\Theta}_{21,54}}
{q^7\,\Theta_{1,6}\,\overline{\Theta}_{0,54}\,\overline{\Theta}_{0,18}\,\Theta_{7,18}}
+\dfrac{\Theta_{18,54}^3\,\overline{\Theta}_{5,18}\,\overline{\Theta}_{15,54}}
{q^6\,\Theta_{1,6}\,\overline{\Theta}_{0,54}\,\overline{\Theta}_{0,18}\,\Theta_{5,18}}.
\end{align*}

We conjecture that
\beq
\Psi_{\rho_6,3}(q)=
\dfrac{\Theta_6^3\Theta_9^4}{\Theta_3^4\Theta_{18}^2}
+2q\dfrac{\Theta_6^2\Theta_9\Theta_{18}}{\Theta_3^3}
+\dfrac{1}{q}\dfrac{\Theta_6\Theta_9\Theta_{12}\Theta_{18}^4}
{\Theta_3^3\Theta_{36}^3}.
\label{eq:conj:Psirho63}
\eeq

We rewrite this as an identity involving generalized eta-products on
$\Gamma_1(108)$ and compute that $B=-396$.
We verify that the first $397$ terms of the $q$-expansions on both sides agree, and we also check the identity up to $O(q^{612})$.

This proves \eqn{conj:Psirho63}, which establishes \eqn{rho6diss3}, giving the required $3$-dissection of $\rho_6(q)$.
\end{proof}

\begin{proof}[Proof of \eqn{lambda6diss3}]
We begin with
$$
\lambda_6(q)=\dfrac{2\, m(1,q^6,-q^2)}{q} + \dfrac{\Theta_1\Theta_3\Theta_{12}}{\Theta_4\Theta_6}.
$$

Applying Corollary~\corol{cor3p6} with $n=3$ and $z'=-1$, together with
\eqref{eq:m_inversion} from Proposition~\propo{prop3p1}, and simplifying, we get
\begin{align*}
\lambda_6(q)
&= \dfrac{4m(q^{18},q^{54},-1)}{q} - \dfrac{2m(1,q^{54},-1)}{q^7} + \dfrac{2\Theta_{18,54}^3 \Theta_{2,18} \Theta_{6,54}}{q^3 \overline{\Theta}_{2,6}\overline{\Theta}_{0,54}\overline{\Theta}_{0,18}\overline{\Theta}_{2,18}} + \dfrac{2 \Theta_{18,54}^3 \Theta_{8,18} \Theta_{24,54}}{q^7 \overline{\Theta}_{2,6}\overline{\Theta}_{0,54}\overline{\Theta}_{0,18}\overline{\Theta}_{8,18}}\\
& + \dfrac{2\Theta_{18,54}^3\Theta_{4,18}\Theta_{12,54}}{q^5 \overline{\Theta}_{2,6}\overline{\Theta}_{0,54}\overline{\Theta}_{0,18}\overline{\Theta}_{4,18}} + \dfrac{\Theta_1\Theta_3\Theta_{12}}{\Theta_4\Theta_6}.
\end{align*}

Next, we apply Theorem~\thm{thm3p3} to the second Appell--Lerch sum with $z_0=-q^{9}$, together with \eqn{theta1} and \eqn{theta2}, and simplify to obtain
\begin{align*}
\lambda_6(q)
&= \dfrac{4m(q^{18},q^{54},-1)}{q} - \dfrac{2 m(1,q^{54},-q^9)}{q^7} - \dfrac{2 \Theta_{54,162}^3 \Theta_{9,54}^2}{q^7 \overline{\Theta}_{9,54}^2\overline{\Theta}_{0,54}^2} + \dfrac{2\Theta_{18,54}^3 \Theta_{2,18} \Theta_{6,54}}{q^3 \overline{\Theta}_{2,6}\overline{\Theta}_{0,54}\overline{\Theta}_{0,18}\overline{\Theta}_{2,18}} \\
& + \dfrac{2 \Theta_{18,54}^3 \Theta_{8,18} \Theta_{24,54}}{q^7 \overline{\Theta}_{2,6}\overline{\Theta}_{0,54}\overline{\Theta}_{0,18}\overline{\Theta}_{8,18}} + \dfrac{2\Theta_{18,54}^3\Theta_{4,18}\Theta_{12,54}}{q^5 \overline{\Theta}_{2,6}\overline{\Theta}_{0,54}\overline{\Theta}_{0,18}\overline{\Theta}_{4,18}} + \dfrac{\Theta_1\Theta_3\Theta_{12}}{\Theta_4\Theta_6}.
\end{align*}

Using the catalog in Section~\subsect{HMcat}, we obtain
\begin{align*}
\lambda_6(q)
=\dfrac{2 \phi_6(q^{18})}{q} - 2q^2\rho_6(-q^9)+\Psi_{\lambda_6,3}(q).
\end{align*}

where
\begin{align*}
\Psi_{\lambda_6,3}(q)
&=- \dfrac{2 \Theta_{54,162}^3 \Theta_{9,54}^2}{q^7 \overline{\Theta}_{9,54}^2\overline{\Theta}_{0,54}^2} + \dfrac{2\Theta_{18,54}^3 \Theta_{2,18} \Theta_{6,54}}{q^3 \overline{\Theta}_{2,6}\overline{\Theta}_{0,54}\overline{\Theta}_{0,18}\overline{\Theta}_{2,18}} + \dfrac{2 \Theta_{18,54}^3 \Theta_{8,18} \Theta_{24,54}}{q^7 \overline{\Theta}_{2,6}\overline{\Theta}_{0,54}\overline{\Theta}_{0,18}\overline{\Theta}_{8,18}}\\
& + \dfrac{2\Theta_{18,54}^3\Theta_{4,18}\Theta_{12,54}}{q^5 \overline{\Theta}_{2,6}\overline{\Theta}_{0,54}\overline{\Theta}_{0,18}\overline{\Theta}_{4,18}} + \dfrac{\Theta_1\Theta_3\Theta_{12}}{\Theta_4\Theta_6}.
\end{align*}

We conjecture that
\beq
\Psi_{\lambda_6,3}(q)=
\dfrac{\Theta_3^5\,\Theta_9\,\Theta_{18}}{\Theta_6^6}
-q\,\dfrac{\Theta_3^6\,\Theta_{18}^4}{\Theta_6^7\,\Theta_9^2}
-3q^2\,\dfrac{\Theta_3^3\,\Theta_{18}^5}{\Theta_6^6\,\Theta_9}
-\dfrac{2}{q}\dfrac{\Theta_3^4\,\Theta_9^2\,\Theta_{12}^2\,\Theta_{54}^5}
        {\Theta_6^6\,\Theta_{18}^2\,\Theta_{27}^2\,\Theta_{108}^2}
\label{eq:conj:Psilambda63}
\eeq

We rewrite this as an identity involving generalized eta-products on
$\Gamma_1(108)$ and compute that $B=-558$.
We verify that the first $559$ terms of the $q$-expansions on both sides agree, and we also check the identity up to $O(q^{774})$.

This proves \eqn{conj:Psilambda63}, which establishes \eqn{lambda6diss3}, giving the required $3$-dissection of $\lambda_6(q)$.
\end{proof}


\begin{proof}[Proof of \eqn{psiminus6diss3}]
We begin with
$$
\psi_{{-}_6}(q)
=
-\dfrac{m(1,q^3,q)}{2}
+\dfrac{q\,\Theta_6^3}{2\Theta_1\Theta_2}.
$$

Applying Corollary~\corol{cor3p6} with $n=3$ and $z'=-1$, together with
\eqref{eq:m_inversion} from Proposition~\propo{prop3p1}, and simplifying, we get
\begin{align*}
\psi_{{-}_6}(q)
&=
-m(q^9,q^{27},-1)
+\dfrac{m(1,q^{27},-1)}{2q^3}
+\dfrac{\Theta_{9,27}^3\overline{\Theta}_{1,9}\overline{\Theta}_{3,27}}
{2q\,\Theta_{1,3}\overline{\Theta}_{0,27}\overline{\Theta}_{0,9}\Theta_{1,9}}
\\
&\quad
-\dfrac{\Theta_{9,27}^3\overline{\Theta}_{4,9}\overline{\Theta}_{12,27}}
{2q^3\,\Theta_{1,3}\overline{\Theta}_{0,27}\overline{\Theta}_{0,9}\Theta_{4,9}}
+\dfrac{\Theta_{9,27}^3\overline{\Theta}_{2,9}\overline{\Theta}_{6,27}}
{2q^2\,\Theta_{1,3}\overline{\Theta}_{0,27}\overline{\Theta}_{0,9}\Theta_{2,9}}
+\dfrac{q\,\Theta_6^3}{2\Theta_1\Theta_2}.
\end{align*}

Next, we apply Theorem~\thm{thm3p3} to the second Appell--Lerch sum with $z_0=-q^9$, together with \eqn{theta1} and \eqn{theta2}, and simplify to obtain
\begin{align*}
\psi_{{-}_6}(q)
&=
\dfrac{m(1,q^{27},-q^9)}{2q^3}
-m(q^9,q^{27},-1)
+\dfrac{1}{2q^3}
\dfrac{\Theta_{27,81}^3\,\Theta_{9,27}^2}
{\overline{\Theta}_{9,27}^2\,\overline{\Theta}_{1,27}^2}
+\dfrac{\Theta_{9,27}^3\overline{\Theta}_{1,9}\overline{\Theta}_{3,27}}
{2q\,\Theta_{1,3}\overline{\Theta}_{1,27}\overline{\Theta}_{1,9}\Theta_{1,9}}
\nonumber\\
&\quad
-\dfrac{\Theta_{9,27}^3\overline{\Theta}_{4,9}\overline{\Theta}_{12,27}}
{2q^3\,\Theta_{1,3}\overline{\Theta}_{1,27}\overline{\Theta}_{1,9}\Theta_{4,9}}
+\dfrac{\Theta_{9,27}^3\overline{\Theta}_{2,9}\overline{\Theta}_{6,27}}
{2q^2\,\Theta_{1,3}\overline{\Theta}_{1,27}\overline{\Theta}_{1,9}\Theta_{2,9}}
+\dfrac{q\,\Theta_6^3}{2\Theta_1\Theta_2}.
\end{align*}

Using the catalog in Section~\subsect{HMcat}, we obtain
\begin{align*}
\psi_{{-}_6}(q)
=
\dfrac{\psi_6(q^9)}{2q^3}
-\dfrac{\phi_6(q^9)}{2}
+\Psi_{\psi_{{-}_6},3}(q).
\end{align*}

where
\begin{align*}
\Psi_{\psi_{{-}_6},3}(q)
&=
\dfrac{1}{2q^3}
\dfrac{\Theta_{27,81}^3\,\Theta_{9,27}^2}
{\overline{\Theta}_{9,27}^2\,\overline{\Theta}_{1,27}^2}
+\dfrac{1}{2q}
\dfrac{\Theta_{9,27}^3\,\overline{\Theta}_{1,9}\,\overline{\Theta}_{3,27}}
{\Theta_{1,3}\,\overline{\Theta}_{1,27}\,\overline{\Theta}_{1,9}\,\Theta_{1,9}}
-\dfrac{1}{2q^3}
\dfrac{\Theta_{9,27}^3\,\overline{\Theta}_{4,9}\,\overline{\Theta}_{12,27}}
{\Theta_{1,3}\,\overline{\Theta}_{1,27}\,\overline{\Theta}_{1,9}\,\Theta_{4,9}}
\\
&\quad
+\dfrac{1}{2q^2}
\dfrac{\Theta_{9,27}^3\,\overline{\Theta}_{2,9}\,\overline{\Theta}_{6,27}}
{\Theta_{1,3}\,\overline{\Theta}_{1,27}\,\overline{\Theta}_{1,9}\,\Theta_{2,9}}
+\dfrac{q}{2}
\dfrac{\Theta_{6,18}^3}
{\Theta_{1,3}\Theta_{2,6}}.
\end{align*}

We conjecture that
\beq
\Psi_{\psi_{{-}_6},3}(q)
=
\dfrac{\Theta_6^7\Theta_9^7}
{2\Theta_3^8\Theta_{18}^5}
+
q\dfrac{\Theta_6^6\Theta_9^4}
{\Theta_3^7\Theta_{18}^2}
+
2q^2\dfrac{\Theta_6^5\Theta_9\Theta_{18}}
{\Theta_3^6}.
\label{eq:conj:Psipsiminus63}
\eeq

We rewrite this as an identity involving generalized eta-products on
$\Gamma_1(54)$ and compute that $B=-144$.
We verify that the first $145$ terms of the $q$-expansions on both sides agree, and we also check the identity up to $O(q^{252})$.

This proves \eqn{conj:Psipsiminus63}, which establishes \eqn{psiminus6diss3}, giving the required $3$-dissection of $\psi_{{-}_6}(q)$.
\end{proof}
\subsection{Proof of Theorem \thm{ord8diss3}}

\begin{proof}[Proof of \eqn{U08diss3}]
We begin with
$$
U_{0,8}(q)=2m(-q,q^4,-1).
$$

Applying Corollary~\corol{cor3p6} with $n=3$ and $z'=-1$, together with
\eqref{eq:m_inversion} from Proposition~\propo{prop3p1}, and simplifying, we get
\begin{align*}
U_{0,8}(q)
&=
-\dfrac{2m(-q^9,q^{36},-1)}{q}
+2m(-q^{15},q^{36},-1)
+\dfrac{2m(-q^3,q^{36},-1)}{q^3}
\\
&\quad
+\dfrac{2\Theta_{12,36}^3\overline{\Theta}_{3,12}\Theta_{0,36}}
{\Theta_{1,4}\overline{\Theta}_{0,36}\Theta_{3,12}\overline{\Theta}_{0,12}}
-\dfrac{4\Theta_{12,36}^4\overline{\Theta}_{5,12}}
{q^3\Theta_{1,4}\overline{\Theta}_{0,36}\Theta_{3,12}\overline{\Theta}_{4,12}}
\\
&\quad
+\dfrac{2\Theta_{12,36}^4\overline{\Theta}_{1,12}}
{q^2\Theta_{1,4}\overline{\Theta}_{0,36}\Theta_{3,12}\overline{\Theta}_{4,12}}.
\end{align*}

Next, we apply Corollary~\corol{cor3p6} with $n=2$, $z'=-1$, and
$q\mapsto q^3$ to $\phi_6(q)=2m(q,q^3,-1)$, together with
\eqref{eq:m_inversion} from Proposition~\propo{prop3p1}. After simplifying, we obtain
\begin{align*}
\phi_6(q^3)
&=
\dfrac{2m(-q^3,q^{36},-1)}{q^3}
+2m(-q^{15},q^{36},-1)
-\dfrac{2\Theta_{18,54}^3\overline{\Theta}_{3,18}}
{\overline{\Theta}_{3,9}\Theta_{3,18}\overline{\Theta}_{0,18}}
\\
&\quad
-\dfrac{2}{q^3}
\dfrac{\Theta_{18,54}^{3}\,\overline{\Theta}_{6,18}\,\overline{\Theta}_{18,36}}
{\overline{\Theta}_{3,9}\,\overline{\Theta}_{0,36}\,\Theta_{3,18}\,\overline{\Theta}_{9,18}}.
\end{align*}

Combining these expressions gives
\begin{align*}
U_{0,8}(q)
=
\phi_6(q^3)
-\dfrac{U_{0,8}(q^9)}{q}
+\Psi_{U_{0,8},3}(q).
\end{align*}

where
\begin{align*}
\Psi_{U_{0,8},3}(q)
&=
-2\,\dfrac{\Theta_{12,36}^{3}\,\overline{\Theta}_{3,12}\,\Theta_{0,36}}
{\Theta_{1,4}\,\overline{\Theta}_{0,36}\,\Theta_{3,12}\,\overline{\Theta}_{0,12}}
-\dfrac{2}{q^3}
\dfrac{\Theta_{12,36}^{4}\,\overline{\Theta}_{5,12}}
{\Theta_{1,4}\,\overline{\Theta}_{0,36}\,\Theta_{3,12}\,\overline{\Theta}_{4,12}}
+\dfrac{2}{q^2}
\dfrac{\Theta_{12,36}^{4}\,\overline{\Theta}_{1,12}}
{\Theta_{1,4}\,\overline{\Theta}_{0,36}\,\Theta_{3,12}\,\overline{\Theta}_{4,12}}
\\
&\quad
+2\,\dfrac{\Theta_{18,54}^{3}\,\overline{\Theta}_{3,18}}
{\overline{\Theta}_{3,9}\,\Theta_{3,18}\,\overline{\Theta}_{0,18}}
+\dfrac{2}{q^3}
\dfrac{\Theta_{18,54}^{3}\,\overline{\Theta}_{6,18}\,\overline{\Theta}_{18,36}}
{\overline{\Theta}_{3,9}\,\overline{\Theta}_{0,36}\,\Theta_{3,18}\,\overline{\Theta}_{9,18}}.
\end{align*}

We conjecture that
\beq
\Psi_{U_{0,8},3}(q)
=
\dfrac{1}{q}
\dfrac{\Theta_6^2\Theta_{12}\Theta_{36}^5}
{\Theta_3\Theta_{18}^2\Theta_{24}^2\Theta_{72}^2}
+q\dfrac{\Theta_6\Theta_{12}^3\Theta_{18}\Theta_{72}}
{\Theta_3\Theta_{24}^3\Theta_{36}}
+q^3\dfrac{\Theta_3^3\Theta_{12}\Theta_{72}^2}
{\Theta_6^2\Theta_{24}^2\Theta_{36}}.
\label{eq:conj:PsiU083}
\eeq

We rewrite this as an identity involving generalized eta-products on
$\Gamma_1(72)$ and compute that $B=-192$.
We verify that the first $193$ terms of the $q$-expansions on both sides agree,
and we also check the identity up to $O(q^{336})$.

This proves \eqn{conj:PsiU083}, which establishes \eqn{U08diss3}, giving the required $3$-dissection of $U_{0,8}(q)$.
\end{proof}


\begin{proof}[Proof of \eqn{U18diss3}]
We begin with
$$
U_{1,8}(q)=-m(-q,q^4,-q^2).
$$

Applying Corollary~\corol{cor3p6} with $n=3$ and $z'=-1$, together with
\eqref{eq:m_inversion} from Proposition~\propo{prop3p1}, and simplifying, we get
\begin{align*}
U_{1,8}(q)
&=
\dfrac{m(-q^9,q^{36},-1)}{q}
-m(-q^{15},q^{36},-1)
-\dfrac{m(-q^3,q^{36},-1)}{q^3}
\\
&\quad
-\dfrac{1}{q^2}
\dfrac{\Theta_{12,36}^{3}\,\overline{\Theta}_{5,12}\,\Theta_{6,36}}
{\Theta_{1,4}\,\overline{\Theta}_{0,36}\,\Theta_{3,12}\,\overline{\Theta}_{2,12}}
+\dfrac{1}{q^3}
\dfrac{\Theta_{12,36}^{3}\,\overline{\Theta}_{3,12}\,\Theta_{18,36}}
{\Theta_{1,4}\,\overline{\Theta}_{0,36}\,\Theta_{3,12}\,\overline{\Theta}_{6,12}}
\\
&\quad
-\dfrac{1}{q}
\dfrac{\Theta_{12,36}^{3}\,\overline{\Theta}_{1,12}\,\Theta_{6,36}}
{\Theta_{1,4}\,\overline{\Theta}_{0,36}\,\Theta_{3,12}\,\overline{\Theta}_{2,12}}.
\end{align*}

Next, we apply Corollary~\corol{cor3p6} with $n=2$, $z'=-1$, and
$q\mapsto q^3$ to $\phi_6(q)=2m(q,q^3,-1)$, together with
\eqref{eq:m_inversion} from Proposition~\propo{prop3p1}. After simplifying, we obtain
\begin{align*}
\phi_6(q^3)
&=
\dfrac{2m(-q^3,q^{36},-1)}{q^3}
+2m(-q^{15},q^{36},-1)
-\dfrac{2\Theta_{18,54}^3\overline{\Theta}_{3,18}}
{\overline{\Theta}_{3,9}\Theta_{3,18}\overline{\Theta}_{0,18}}
\\
&\quad
-\dfrac{2}{q^3}
\dfrac{\Theta_{18,54}^{3}\,\overline{\Theta}_{6,18}\,\overline{\Theta}_{18,36}}
{\overline{\Theta}_{3,9}\,\overline{\Theta}_{0,36}\,\Theta_{3,18}\,\overline{\Theta}_{9,18}}.
\end{align*}

Combining these expressions gives
\begin{align*}
U_{1,8}(q)
=
-\dfrac{\phi_6(q^3)}{2}
+\dfrac{U_{0,8}(q^9)}{2q}
+\Psi_{U_{1,8},3}(q).
\end{align*}

where
\begin{align*}
\Psi_{U_{1,8},3}(q)
&=
-\dfrac{1}{q^2}
\dfrac{\Theta_{12,36}^{3}\,\overline{\Theta}_{5,12}\,\Theta_{6,36}}
{\Theta_{1,4}\,\overline{\Theta}_{0,36}\,\Theta_{3,12}\,\overline{\Theta}_{2,12}}
+\dfrac{1}{q^3}
\dfrac{\Theta_{12,36}^{3}\,\overline{\Theta}_{3,12}\,\Theta_{18,36}}
{\Theta_{1,4}\,\overline{\Theta}_{0,36}\,\Theta_{3,12}\,\overline{\Theta}_{6,12}}
\\
&\quad
-\dfrac{1}{q}
\dfrac{\Theta_{12,36}^{3}\,\overline{\Theta}_{1,12}\,\Theta_{6,36}}
{\Theta_{1,4}\,\overline{\Theta}_{0,36}\,\Theta_{3,12}\,\overline{\Theta}_{2,12}}
-\dfrac{\Theta_{18,54}^{3}\,\overline{\Theta}_{3,18}}
{\overline{\Theta}_{3,9}\,\Theta_{3,18}\,\overline{\Theta}_{0,18}}
\\
&\quad
-\dfrac{1}{q^3}
\dfrac{\Theta_{18,54}^{3}\,\overline{\Theta}_{6,18}\,\overline{\Theta}_{18,36}}
{\overline{\Theta}_{3,9}\,\overline{\Theta}_{0,36}\,\Theta_{3,18}\,\overline{\Theta}_{9,18}}.
\end{align*}

We conjecture that
\beq
\Psi_{U_{1,8},3}(q)
=
-\dfrac{1}{2q}
\dfrac{\Theta_6^4\,\Theta_9\,\Theta_{24}^2\,\Theta_{36}^4}
{\Theta_{12}^6\,\Theta_{18}^2\,\Theta_{72}^2}
+\dfrac{\Theta_3^3\,\Theta_{24}^2\,\Theta_{36}^5}
{2\,\Theta_{12}^5\,\Theta_{18}^2\,\Theta_{72}^2}
+q\,\dfrac{\Theta_6^4\,\Theta_{24}^3\,\Theta_{36}^2}
{\Theta_3\,\Theta_{12}^6\,\Theta_{72}}.
\label{eq:conj:PsiU183}
\eeq

We rewrite this as an identity involving generalized eta-products on
$\Gamma_1(72)$ and compute that $B=-192$.
We verify that the first $193$ terms of the $q$-expansions on both sides agree,
and we also check the identity up to $O(q^{336})$.

This proves \eqn{conj:PsiU183}, which establishes \eqn{U18diss3}, giving the required $3$-dissection of $U_{1,8}(q)$.
\end{proof}


\begin{proof}[Proof of \eqn{V08diss3}]
We begin with
$$
V_{0,8}(q)=-\dfrac{2\,m(1,q^8,q)}{q} - \dfrac{\Theta_2^3\Theta_4}{\Theta_1^2\Theta_8}.
$$

Applying Corollary~\corol{cor3p6} with $n=3$ and $z'=-1$, together with
\eqref{eq:m_inversion} from Proposition~\propo{prop3p1}, and simplifying, we obtain
\begin{align*}
V_{0,8}(q)
&=
-\dfrac{4m(q^{24},q^{72},-1)}{q} + \dfrac{2m(1,q^{72},-1)}{q^9} + \dfrac{2 \Theta_{24,72}^3\overline{\Theta}_{1,24}\overline{\Theta}_{3,72}}{q^2\, \Theta_{1,8}\overline{\Theta}_{0,72}\overline{\Theta}_{0,24}\Theta_{1,24}} \\
&\quad - \dfrac{2\Theta_{24,72}^3\overline{\Theta}_{9,24}\overline{\Theta}_{27,72}}{q^9 \Theta_{1,8}\overline{\Theta}_{0,72}\overline{\Theta}_{0,24}\Theta_{9,24}}
+ \dfrac{2\,\Theta_{24,72}^3\overline{\Theta}_{7,24}\overline{\Theta}_{21,72}}{q^8\, \Theta_{1,8}\overline{\Theta}_{0,72}\overline{\Theta}_{0,24}\Theta_{7,24}}
- \dfrac{\Theta_2^3\Theta_4}{\Theta_1^2\Theta_8}.
\end{align*}

Next, we apply Theorem~\thm{thm3p3} to the second Appell--Lerch sum with $z_0=-q^{24}$, together with \eqn{theta1} and \eqn{theta2}. After simplification, this gives
\begin{align*}
V_{0,8}(q)
&=
-\dfrac{4m(q^{24},q^{72},-1)}{q}
+ \dfrac{2\,m(1,q^{72},-q^{24})}{q^9}
+ \dfrac{2\Theta_{72,216}^3\Theta_{24,72}^2}{q^9\overline{\Theta}_{24,72}^2\overline{\Theta}_{0,72}^2}
+ \dfrac{2 \Theta_{24,72}^3\overline{\Theta}_{1,24}\overline{\Theta}_{3,72}}{q^2\, \Theta_{1,8}\overline{\Theta}_{0,72}\overline{\Theta}_{0,24}\Theta_{1,24}} \\
&\quad - \dfrac{2\Theta_{24,72}^3\overline{\Theta}_{9,24}\overline{\Theta}_{27,72}}{q^9 \Theta_{1,8}\overline{\Theta}_{0,72}\overline{\Theta}_{0,24}\Theta_{9,24}}
+ \dfrac{2\,\Theta_{24,72}^3\overline{\Theta}_{7,24}\overline{\Theta}_{21,72}}{q^8\, \Theta_{1,8}\overline{\Theta}_{0,72}\overline{\Theta}_{0,24}\Theta_{7,24}}
- \dfrac{\Theta_2^3\Theta_4}{\Theta_1^2\Theta_8}.
\end{align*}

Using the catalog in Section~\subsect{HMcat}, we rewrite this as
\begin{align*}
V_{0,8}(q)
=
\frac{2\psi_6(q^{24})}{q^9}
-
\frac{2\phi_6(q^{24})}{q}
+
\Psi_{V_{0,8},3}(q).
\end{align*}

where
\begin{align*}
\Psi_{V_{0,8},3}(q)
&=
\dfrac{2\Theta_{72,216}^3\Theta_{24,72}^2}{q^9\overline{\Theta}_{24,72}^2\overline{\Theta}_{0,72}^2}
+
\dfrac{2 \Theta_{24,72}^3\overline{\Theta}_{1,24}\overline{\Theta}_{3,72}}{q^2\, \Theta_{1,8}\overline{\Theta}_{0,72}\overline{\Theta}_{0,24}\Theta_{1,24}} \\
&\quad -
\dfrac{2\Theta_{24,72}^3\overline{\Theta}_{9,24}\overline{\Theta}_{27,72}}{q^9 \Theta_{1,8}\overline{\Theta}_{0,72}\overline{\Theta}_{0,24}\Theta_{9,24}}
+
\dfrac{2\,\Theta_{24,72}^3\overline{\Theta}_{7,24}\overline{\Theta}_{21,72}}{q^8\, \Theta_{1,8}\overline{\Theta}_{0,72}\overline{\Theta}_{0,24}\Theta_{7,24}}
-
\dfrac{\Theta_2^3\Theta_4}{\Theta_1^2\Theta_8}.
\end{align*}

We conjecture that
\beq
\Psi_{V_{0,8},3}(q)
=
\dfrac{2}{q}\dfrac{\Theta_6\Theta_{24}^3\Theta_{36}^2}{\Theta_3^2\Theta_{48}^2\Theta_{72}} 
+ \dfrac{\Theta_6^5\Theta_{18}^2\Theta_{24}^4}{\Theta_3^3\Theta_9\Theta_{12}^4\Theta_{48}^2} \nonumber \\
+ 2q\dfrac{\Theta_6^3\Theta_9\Theta_{12}\Theta_{36}}{\Theta_3^3\Theta_{18}\Theta_{24}} 
+ q^3\,\dfrac{\Theta_6^8\Theta_9^2\Theta_{24}\Theta_{36}^2\Theta_{144}}{\Theta_3^4\Theta_{12}^4\Theta_{18}^3\Theta_{48}\Theta_{72}} 
+ 4q^9\,\dfrac{\Theta_6\Theta_{12}^2\Theta_{144}^2}{\Theta_3^2\Theta_{24}\Theta_{72}}.
\label{eq:conj:PsiV083}
\eeq

We rewrite this identity in terms of generalized eta-products on
$\Gamma_1(144)$ and compute that $B=-840$.
We then verify that the first $841$ terms in the $q$-expansions on both sides agree. Furthermore, we check the identity up to $O(q^{1128})$.

This proves \eqn{conj:PsiV083}, which establishes \eqn{V08diss3} and hence gives the required $3$-dissection of $\Psi_{V_{0,8},3}(q)$.
\end{proof}


\begin{proof}[Proof of \eqn{V18diss3}]
We begin with
$$
V_{1,8}(q)=-m(q^2,q^8,q).
$$

Applying Corollary~\corol{cor3p6} with $n=3$ and $z'=-1$, together with
\eqref{eq:m_inversion} from Proposition~\propo{prop3p1}, and simplifying, we obtain
\begin{align*}
V_{1,8}(q)
&=
-\dfrac{m(q^{18},q^{72},-1)}{q^2}
- m(q^{30},q^{72},-1)
+ \dfrac{m(q^6,q^{72},-1)}{q^6}
+ \dfrac{\Theta_{24,72}^3\overline{\Theta}_{7,24}\overline{\Theta}_{3,72}}{q \Theta_{3,8}\overline{\Theta}_{0,72}\overline{\Theta}_{6,24}\Theta_{1,24}}\\
&\quad
- \dfrac{\Theta_{24,72}^3\overline{\Theta}_{9,24}\overline{\Theta}_{27,72}}{q^6\Theta_{3,8}\overline{\Theta}_{0,72}\overline{\Theta}_{6,24}\Theta_{9,24}}
+ \dfrac{\Theta_{24,72}^3\overline{\Theta}_{1,24}\overline{\Theta}_{21,72}}{q^3 \Theta_{3,8}\overline{\Theta}_{0,72}\overline{\Theta}_{6,24}\Theta_{7,24}}.
\end{align*}

Next, we apply Corollary~\corol{cor3p6} with $n=2$, $z'=-1$, and
$q\mapsto -q^6$ to $\phi_6(q)=2m(q,q^3,-1)$, together with
\eqref{eq:m_inversion} from Proposition~\propo{prop3p1}. After simplification, we obtain
\begin{align*}
\phi_6(-q^6)
&=
2m(q^{30},q^{72},-1)
-
\dfrac{2m(q^6,q^{72},-1)}{q^6}
-
\dfrac{2\Theta_{36,108}^3 \Theta_{6,36}}
{\Theta(q^6;-q^{18})\overline{\Theta}_{6,36}\overline{\Theta}_{0,36}} \\
&\quad
+
\dfrac{2q^6 \Theta_{36,108}^3\overline{\Theta}_{48,36}\overline{\Theta}_{36,72}}
{\Theta(q^6;-q^{18})\overline{\Theta}_{0,72}\overline{\Theta}_{6,36}\Theta_{18,36}}.
\end{align*}

Combining these expressions, and using $U_{0,8}(q)=2m(-q,q^4,-1)$, gives
\begin{align*}
V_{1,8}(q)
=
-\dfrac{\phi_6(-q^6)}{2}
-
\dfrac{U_{0,8}(-q^{18})}{2q^2}
+
\Psi_{V_{1,8},3}(q).
\end{align*}

where
\begin{align*}
\Psi_{V_{1,8},3}(q)
&=
\dfrac{\Theta_{24,72}^3\overline{\Theta}_{7,24}\overline{\Theta}_{3,72}}
{q \Theta_{3,8}\overline{\Theta}_{0,72}\overline{\Theta}_{6,24}\Theta_{1,24}}
-
\dfrac{\Theta_{24,72}^3\overline{\Theta}_{9,24}\overline{\Theta}_{27,72}}
{q^6\Theta_{3,8}\overline{\Theta}_{0,72}\overline{\Theta}_{6,24}\Theta_{9,24}} \\
&\quad
+
\dfrac{\Theta_{24,72}^3\overline{\Theta}_{1,24}\overline{\Theta}_{21,72}}
{q^3 \Theta_{3,8}\overline{\Theta}_{0,72}\overline{\Theta}_{6,24}\Theta_{7,24}}
-
\dfrac{2\Theta_{36,108}^3 \Theta_{6,36}}
{\Theta(q^6;-q^{18})\overline{\Theta}_{6,36}\overline{\Theta}_{0,36}} \\
&\quad
+
\dfrac{2q^6 \Theta_{36,108}^3\overline{\Theta}_{48,36}\overline{\Theta}_{36,72}}
{\Theta(q^6;-q^{18})\overline{\Theta}_{0,72}\overline{\Theta}_{6,36}\Theta_{18,36}}.
\end{align*}

We conjecture that
\beq
\Psi_{V_{1,8},3}(q)
=
\dfrac{1}{2q^2}
\dfrac{\Theta_6\,\Theta_{18}^3\,\Theta_{24}^5}
{\Theta_3^2\,\Theta_{12}^2\,\Theta_{36}\,\Theta_{48}^2\,\Theta_{72}}
+
\dfrac{\Theta_9^2\,\Theta_{12}^7}
{2\,\Theta_3^2\,\Theta_6^2\,\Theta_{18}\,\Theta_{24}^3}
+
q^2\,
\dfrac{\Theta_6^4\,\Theta_9\,\Theta_{24}\,\Theta_{36}}
{\Theta_3^3\,\Theta_{12}^2\,\Theta_{18}}.
\label{eq:conj:PsiV183}
\eeq

We rewrite this identity in terms of generalized eta-products on
$\Gamma_1(144)$ and compute that $B=-744$.
We then verify that the first $745$ terms in the $q$-expansions on both sides agree. Furthermore, we check the identity up to $O(q^{1032})$.

This proves \eqn{conj:PsiV183}, which establishes \eqn{V18diss3} and hence gives the required $3$-dissection of $V_{1,8}(q)$.
\end{proof}
\section{Concluding remarks}\label{Concluding remarks}

\begin{enumerate}

\item In view of the corollaries established above, one can derive further congruence relations for various mock theta functions. For example, from \eqref{eq:V0 3n+2}, it follows immediately that
\begin{equation*}
P_{V_{0,8}}(3n+2)\equiv 0 \pmod{2}.
\end{equation*}

\item We observe that the pair $(\rho_6,\lambda_6)$ shares the same even part, while $(\sigma_6,\mu_6)$ shares the same odd part. This naturally leads to the question of whether similar phenomena occur for other mock theta functions. More generally, does there exist an integer $a \in \mathbb{N}$ and a residue class $0 \le r \le a-1$ such that two distinct mock theta functions have identical coefficients along the arithmetic progression $an + r$?

\item From equations \eqref{A2}, \eqref{eq:V_18 2n}, and \eqn{eta1diss2} we obtain
\begin{equation*}
    \sum_{n \ge 0} P_{V_{1,8}}(2n)\, q^{n} = A_2(q) = \sum_{n \ge 0} P_{A_2}(n)\, q^{n}.
\end{equation*}
Thus, in response to the above question, we see that $A_2(q)$ and $V_{1,8}(q)$ agree under the $2n$-projection of $V_{1,8}(q)$.
\item Using the method developed in this paper, one can also obtain 
$m$-dissections of other mock theta functions. We plan to pursue this 
direction in subsequent work.   

\item We conclude this paper with a short and elementary proof of another
$2$-dissection for $V_{0,8}(q)$. The argument is simple and direct.

\begin{proof}
From \cite[Equation (2.14)]{mortenson2024ramanujan}, we have
\[
qB_2(q)-2A_2(-q^4)
=
q\,\frac{\Theta_2\,\Theta_4^5\,\Theta_{16}^2}
{\Theta_1^2\,\Theta_8^5}.
\]

Replacing $q$ by $q^2$ yields
\begin{equation}
q^2B_2(q^2)-2A_2(-q^8)
=
q^2\,
\frac{\Theta_4\,\Theta_8^5\,\Theta_{32}^2}
{\Theta_2^2\,\Theta_{16}^5}.
\label{eq:v01}
\end{equation}

We also have from \cite[Equations (3) and (6)]{mcintosh2007second} that
\begin{align}
V_0(q)-V_0(-q)
&=
4q\,B(q^2),
\label{eq:v02}
\\
V_0(q)+V_0(-q)
&=
2\,\frac{\Theta_4^8}
{\Theta_2^4\,\Theta_8^3}.
\label{eq:v03}
\end{align}

Adding \eqref{eq:v02} and \eqref{eq:v03}, we obtain
\[
V_0(q)
=
2q\,B(q^2)
+
\frac{\Theta_4^8}
{\Theta_2^4\,\Theta_8^3}.
\]

Hence,
\begin{align*}
V_0(q)-\frac{4}{q}A_2(-q^8)
&=
2q\,B(q^2)
+
\frac{\Theta_4^8}
{\Theta_2^4\,\Theta_8^3}
-
\frac{4}{q}A_2(-q^8)
\\
&=
\frac{2}{q}
\left(
q^2B_2(q^2)-2A_2(-q^8)
\right)
+
\frac{\Theta_4^8}
{\Theta_2^4\,\Theta_8^3}
\\
&=
2q\,
\frac{\Theta_4\,\Theta_8^5\,\Theta_{32}^2}
{\Theta_2^2\,\Theta_{16}^5}
+
\frac{\Theta_4^8}
{\Theta_2^4\,\Theta_8^3},
\end{align*}
where the final equality follows from \eqref{eq:v01}. Therefore,
\[
V_0(q)
=
\frac{4}{q}A_2(-q^8)
+
2q\,
\frac{\Theta_4\,\Theta_8^5\,\Theta_{32}^2}
{\Theta_2^2\,\Theta_{16}^5}
+
\frac{\Theta_4^8}
{\Theta_2^4\,\Theta_8^3}.
\]
This completes the proof.
\end{proof}

\item In this paper, we obtain complete 2-dissections for the second-order mock theta functions $A_2(q)$, $B_2(q)$, and $\mu_2(q)$; the sixth-order mock theta functions $\rho_6(q)$, $\lambda_6(q)$, $\sigma_6(q)$, and $\mu_6(q)$; and the eighth-order mock theta functions $V_{0,8}(q)$ and $V_{1,8}(q)$.

For 3-dissections, we are able to obtain results only for the second-order mock theta functions $A_2(q)$, $B_2(q)$, and $\mu_2(q)$; the sixth-order mock theta functions $\psi_6(q)$, $\rho_6(q)$, $\lambda_6(q)$, and $\psi_{-6}(q)$; and the eighth-order mock theta functions $U_{0,8}(q)$, $U_{1,8}(q)$, $V_{0,8}(q)$, and $V_{1,8}(q)$.

We were unable to obtain 2-dissections for the following sixth-order mock theta functions:
\begin{equation*}
\phi_6(q),\ \psi_6(q),\ \gamma_6(q),\ \phi_{-6}(q),\ \psi_{-6}(q),
\end{equation*}
and for the following eighth-order mock theta functions:
\begin{equation*}
S_{0,8}(q),\ S_{1,8}(q),\ T_{0,8}(q),\ T_{1,8}(q),\ U_{0,8}(q),\ U_{1,8}(q).
\end{equation*}

Similarly, for 3-dissections, we were unable to obtain results for the sixth-order mock theta functions
\begin{equation*}
\phi_6(q),\ \sigma_6(q),\ \mu_6(q),\ \gamma_6(q),\ \phi_{-6}(q),
\end{equation*}
and for the eighth-order mock theta functions
\begin{equation*}
S_{0,8}(q),\ S_{1,8}(q),\ T_{0,8}(q),\ T_{1,8}(q).
\end{equation*}

Therefore, we leave these unresolved dissections as open problems for interested readers, hoping that future investigations of these cases will produce genuinely new dissections that have not previously appeared in the literature.

\end{enumerate}

\newpage

\appendix

\section{A General Algorithm for Proving Theta Identities}\label{sec:algorithm}

In this appendix, we review a general modular-function method for proving the theta identities given in \cite{frye2019automatic}. The argument is based on the Valence Formula together with standard results on generalized eta-products.

Suppose we wish to prove an identity involving a finite product or quotient of theta functions.

\medskip\noindent
\textbf{$\theta$-Step 1.}
Rewrite the identity as
\[
\sum_{j=1}^r \alpha_j F_j(\tau)+1=0,
\]
where each $F_j$ is a generalized eta-product. Let
\[
g(\tau)=\sum_{j=1}^r \alpha_j F_j(\tau)+1.
\]
It suffices to show that $g(\tau)\equiv 0$.

\medskip\noindent
\textbf{$\theta$-Step 2.}
Show that each $F_j(\tau)$ is a modular function on $\Gamma_1(N)$ for some $N$.

\medskip\noindent
\textbf{$\theta$-Step 3.}
Determine a complete set $\mathcal{S}_N$ of inequivalent cusps of $\Gamma_1(N)$ together with their widths $\kappa(\zeta,\Gamma)$.

\medskip\noindent
\textbf{$\theta$-Step 4.}
For each $F_j$ and $\zeta\in\mathcal{S}_N$, compute
\[
\operatorname{ORD}(F_j,\zeta,\Gamma)
=\kappa(\zeta,\Gamma)\operatorname{ord}(F_j,\zeta).
\]

\medskip\noindent
\textbf{$\theta$-Step 5.}
Define
\[
B=\sum_{\substack{\zeta\in\mathcal{S}_N\\ \zeta\neq\infty}}
\min\Big(\{\operatorname{ORD}(F_j,\zeta,\Gamma):1\le j\le r\}\cup\{0\}\Big).
\]

\medskip\noindent
\textbf{$\theta$-Step 6.}
Compute the $q$-expansion of $g(\tau)$ at $\infty$ and verify that
\[
\operatorname{ORD}(g,\infty,\Gamma_1(N))>-B.
\]
Then $g(\tau)\equiv 0$ by the Valence Formula.

\end{document}